\def\M{\mathfrak{v}}
\def\C{\mathcal{C}}
\let\lif\rightarrow
\let\nif\nrightarrow
\let\xor\oplus
\let\nor\downarrow
\def\ite{{\lif}{/}}
\def\L#1{\mathbf{L #1}}
\def\Lc#1{\mathbf{L^c #1}}
\def\Ls#1{\mathbf{LJ #1}}
\def\Nm#1{\mathbf{N}_m \mathbf{#1}}
\def\Nms#1{\mathbf{N}_m^s\mathbf{#1}}
\def\Nmp#1{\mathbf{N}_m^p\mathbf{#1}}
\def\Nmsl#1{\mathbf{N}_m^l\mathbf{#1}}
\def\Nsl#1{\mathbf{N}^l\mathbf{#1}}
\def\Ns#1{\mathbf{N}^l\mathbf{#1}}
\def\LK{\mathbf{LK}}
\def\LJ{\mathbf{LJ}}
\def\NJ{\mathbf{NJ}}
\def\FD{\mathbf{FD}}
\def\Q{\mathsf{Q}}
\def\cut{\mbox{\textsc{cut}}}
\def\kut{\mbox{\textsc{kut}}}
\def\mix{\mbox{\textsc{mix}}}
\def\kix{\mbox{\textsc{kix}}}
\def\gem{\mbox{\textsc{gem}}}
\def\LEM{\mbox{\textsc{lem}}}
\def\subst{\mbox{\textsc{subst}}}
\let\opa\circledast
\let\opb\odot
\def\Left#1{\mathord{#1}\textsc{l}}
\def\Right#1{\mathord{#1}\textsc{r}}
\def\Intro#1{\mathord{#1}\textsc{i}}
\def\Elim#1{\mathord{#1}\textsc{e}}
\def\LElim#1{\mathord{#1}\textsc{le}}
\def\RElim#1{\mathord{#1}\textsc{re}}
\def\Weak{\textsc{w}}
\def\Exch{\textsc{x}}
\def\Contr{\textsc{c}}
\def\fCenter{\ensuremath{\vdash}}
\newtheorem{thm}{Theorem}
\newtheorem{lem}[thm]{Lemma}
\newtheorem{prop}[thm]{Proposition}
\newtheorem{conj}[thm]{Conjecture}
\newtheorem{cor}[thm]{Corollary}
\theoremstyle{definition}
\newtheorem{defn}[thm]{Definition}
\title[Cut elimination and normalization for generalized calculi]{Cut elimination and normalization for generalized single and
multi-conclusion sequent
and natural deduction calculi}
\author{Richard Zach}
\address{University of Calgary\\
Department of Philosophy\\
2500 University Drive NW\\
Calgary, AB T2N 1N4, Canada}
\email{rzach@ucalgary.ca}
\urladdr{https://richardzach.org/}
\begin{document}


\def\journalname{The Review of Symbolic Logic}
\def\shortjournalname{The Review of Symbolic Logic}
\def\longjournalname{The \hfil Review \hfil of \hfil Symbolic \hfil
Logic}
\def\jslname{{\bfseries\itshape\selectfont The Journal of Symbolic Logic}}
\def\ISSN{1755-0203}

\begin{abstract}
Any set of truth-functional connectives has sequent calculus rules
that can be generated systematically from the truth tables of the
connectives. Such a sequent calculus gives rise to a
multi-conclusion natural deduction system and to a version of
Parigot's free deduction. The elimination rules are ``general,'' but
can be systematically simplified. Cut-elimination and normalization
hold. Restriction to a single formula in the succedent yields
intuitionistic versions of these systems. The rules also yield
generalized lambda calculi providing proof terms for natural deduction
proofs as in the Curry-Howard isomorphism. Addition of an indirect
proof rule yields classical single-conclusion versions of these
systems. Gentzen's standard systems arise as special
cases.\thanks{Forthcoming in \emph{The Review of Symbolic
Logic},
\href{http://dx.doi.org/10.1017/S1755020320000015}{DOI:10.1017/S1755020320000015}.
\copyright~Cambridge University Press.}
\end{abstract}

\maketitle

\section{Introduction} 

The literature on structural proof theory and proof-theoretic
semantics, and especially the literature on proof-theoretic harmony, is
full of sophisticated considerations offering various methods for
deriving elimination rules from introduction rules, articulating
criteria for what the meaning expressed or inherent in an introduction or
elimination rule is, and of investigations of particular sets of
rules regarding their proof-theoretic properties and strengths.
Rather than attempt to cut through this multitude of complex and
interacting considerations, it may be beneficial to adopt a general
perspective.  Such a general perspective may provide insight into the
combinatorial \emph{reasons} why certain sets of rules and certain
ways of constructing calculi have certain properties (such as cut
elimination or normalization).

Here we explore an approach based on the work of \cite{Baaz1984}. This
approach gives a general method for determining rules for a classical
sequent calculus for arbitrary $n$-valued connectives. \cite{Zach1993}
and \cite{BaazFermullerZach1994} showed that the cut-elimination
theorem holds for such systems. The approach generalizes the classical
case: Gentzen's $\LK$ is the result of the method applied to the
truth-tables for the classical $2$-valued connectives. Thus, the
cut-elimination theorem for $\LK$ can be \emph{explained} by the
structural features of the sequent calculus together with a semantic
feature of the logical rules of~$\LK$, namely that the conditions
under which the premises of any pair of left and right rules are
satisfied are incompatible.  This is an explanation in the sense that
the specific result for~$\LK$ follows from a general result about any
sequent calculus the rules of which satisfy this simple semantic
property. A restricted calculus such as one for intuitionistic logic
will also satisfy this property, and so the explanation extends
to~$\LJ$. It shows that there is nothing \emph{special} about the
usual rules for $\land$, $\lor$, $\lif$; any truth functional operator
can be accommodated using a general schema of which the usual
operators are special cases, and cut-elimination will hold.
\cite{BaazFermullerZach1993b} showed how the same method can be used
to obtain multi-conclusion natural deduction systems. This applies in
the $2$-valued case as well, and yields natural deduction systems for any
truth-functional connective (e.g., see \cite{Zach2016} for a detailed
treatment of the cases of the nand and nor operators). Here we show
that the connections between multi-conclusion and
single-conclusion natural deduction systems generalize, and that all
such systems (multi- or single-conclusion) normalize.

The general result about cut elimination is obtained by considering
the relation between the cut rule in $\LK$ and the resolution rule,
and observing that a resolution refutation yields a derivation of the
conclusion of a cut (more precisely, mix) inference from the premises
of corresponding left and right logical inferences directly, using
only cuts.  This is the essential part of Gentzen's cut-elimination
procedure. It also underlies the reduction and simplification
conversions of the normalization proof for natural deduction.  Rather
than attempt to explain the proof-theoretic harmony enjoyed by
specific sets of rules by deriving some of them from others (e.g., the
derivation of elimination rules from introduction rules), we can
explain it as a result of combinatorial features of the rules which
flow naturally from the classical semantics.  These features are
preserved, moreover, when the calculi are restricted in certain ways
to obtain intuitionistic systems.

Combinatorial features of rules can also explain, and be explained by,
the properties we want the full systems they are part of to enjoy. The
reduction of cuts and local derivation maxima requires, and is made
possible by, a combinatorial feature of the left/right (intro/elim)
rule pairs, namely that a complete set of premises can be refuted. The
Curry-Howard isomorphism between derivations and terms in a lambda
calculus, and correspondingly between proof reductions and
$\beta$-reductions in the lambda calculus, requires, and is made
possible by, the same property.  The general perspective taken here
shows how the Curry-Howard correspondence pairs
discharge of assumptions with abstraction, and proof substitutions with
reduction, in the following sense: the constructor and destructor
terms corresponding to intro and elim rules require the abstraction of
a variable every time an assumption is discharged.  The $\lif$-intro
rule and simple $\lambda$-abstraction appear as special cases where
discharge in a single premise corresponds to abstraction of a single
variable, and the constructor function symbol can essentially be left
out.

This perspective is of course not the only possible one. It
generalizes the various proof-theoretic frameworks in one direction,
namely sets of connectives and rules other than the usual set of
$\lnot$, $\land$, $\lor$, $\lif$. It is applicable to any (set of)
connectives that has a truth-table semantics (in the classical case).
The intuitionistic case then arises in the same way that $\LJ$ arises
from $\LK$ by restriction to a single formula in the succedent. Other
approaches are more appropriate if one wants to avoid a semantics as a
starting point, or at least not assume that the connectives considered
have a classical semantics. \cite{Schroeder-Heister1984} has
generalized natural deduction directly by allowing not just formulas,
but rules as assumptions, and considers natural deduction rules for
arbitrary connectives in this framework. Another general perspective
is taken by \cite{SambinBattilottiFaggian2000}, where the
generalization covers more kinds of calculi (e.g., substructural
systems). They take as their starting point the introduction (right)
rules for a connective and derive the elimination (left) rules from it
by a process of ``reflection.'' The connection between these two
approaches is investigated in \cite{Schroeder-Heister2013}.

In the rest of the paper we review the construction of general sequent
calculi (\S\ref{sec:rules}) and show how the usual rules of~$\LK$
arise as the result of splitting of rules, to ensure at most formula
occurs in the succedent of any premise  (\S\ref{sec:splitting}). From
these sequent calculi we obtain multi-conclusion natural deduction
systems with general elimination rules (\S\ref{sec:mcnd}). The usual
natural deduction rules arise by specializing these general
elimination rules (\S\ref{sec:specialize}). We show that the cut
elimination theorem holds for the sequent calculi so constructed
(\S\ref{sec:cutel}) and that normalization holds for the
multi-conclusion natural deduction systems
(\S\ref{sec:normalization}). The process of splitting rules guarantees
that for any connective there are candidate rules for a
single-conclusion sequent calculus, which relates to the
multi-conclusion system as intuitionistic sequent calculus~$\LJ$
relates to classical~$\LK$. We describe such ``intuitionistic''
single-conclusion sequent calculi (\S\ref{sec:sclk}) and explain how
to obtain a classical single-conclusion system equivalent to the
multi-conclusion system (\S\ref{sec:classical}). From
single-conclusion sequent calculi we can in turn obtain
single-conclusion natural deduction systems (\S\ref{sec:scnd}). The
rules of single-conclusion natural deduction correspond to
constructors and destructors for a generalized lambda calculus, of
which the usual typed lambda calculus is again a special case
(\S\ref{sec:ch}). We describe how the general construction of rules
also extends to Parigot's system of free deduction (\S\ref{sec:fd}),
recently rediscovered by Milne as natural deduction with  general
elimination and general introduction rules. Finally, we sketch how to
extend the approach to quantifiers (\S\ref{sec:quantifiers}).

\section{Complete sets of sequent calculus rules}
\label{sec:rules}

We recall some terminology: A \emph{literal} is an atomic or a negated
atomic formula. A disjunction of literals is also called a
\emph{clause} and is often often written simply as a set.  Thus, the
disjunction $\lnot A_1 \lor A_2$ may also be written as $\{\lnot A_1,
A_2\}$.  Satisfaction conditions for clauses under a valuation~$\M$
are just those of the corresponding disjunction, i.e., if $C = \{L_1,
\dots, L_n\}$ is a clause, $\M \models C$ iff $\M \models L_i$ for at
least one~$L_i$.  A set of clauses is satisfied in~$\M$ iff each
clause in it is.  Thus, a set of clauses corresponds to a conjunctive
normal form, i.e., a conjunction of disjunctions of literals.

Now consider a truth-functional connective~$\opa$.  To say that $\opa$
is truth-functional is to say that its semantics is given by a truth
function $\tilde\opa \colon \{\top, \bot\}^n \to \{\top, \bot\}$, and
that the truth conditions for a formula $A \equiv \opa(A_1, \dots,
A_n)$ are given by:
\[
\M \models \opa(A_1, \dots, A_n) \text{ iff } \tilde\opa(v_1, \dots,
v_n) = \top,
\]
where $v_i = \top$ if $\M \models A_i$ and $= \bot$ otherwise.

As is well known, every truth-functional connective $\opa(A_1, \dots,
A_n)$ can be expressed by a conjunctive normal form in the $A_i$,
i.e., a conjunction of clauses in the~$A_i$, and the same is true for
its negation.  In other words, for every truth function~$\tilde\opa$,
there is a set of clauses~$\C(\opa)^+$ which is satisfied in~$\M$ iff
$\opa(A_1, \dots, A_n)$ is, and a set of clauses $\C(\opa)^-$ which is
not satisfied iff $\opa(A_1, \dots, A_n)$ is not.  $\C(\opa)^+$ and
$\C(\opa)^-$ are of course not unique.

The \emph{Kowalski notation} for a clause~$C = \{\lnot A_1, \dots
\lnot A_k, B_1, \dots, B_l\}$ ($A_i$, $B_j$ atomic) is an expression
of the form $A_1, \dots, A_k \fCenter B_1, \dots, B_l$.  We can now
establish a correspondence between truth-functional
connectives~$\opa$, their associated clause sets $\C(\opa)^+$ and
$\C(\opa)^-$, and sequent calculus rules for them.  If $\C$ is a
clause set, then the corresponding set of premises is the set of
sequents obtained from the Kowalski notations of the clauses in~$\C$
by adding schematic variables for formula sequences $\Gamma$, $\Delta$
in the antecedent and succedent.  By a slight abuse of notation, we
use the same meta-variables~$A_i$ for the schematic formulas in the
sequent on the one hand, and the atomic formulas in the clause set on
the other.  Each premise has the form $\Pi, \Gamma \fCenter \Delta,
\Lambda$ where the formulas in $\Pi$ are the $A_i$ occurring
negatively in the respective clause, and those in $\Lambda$ are the
$A_i$ occurring positively.  The $\Left\opa$ rule is the rule with the
premise set corresponding to $\C(\opa)^-$ and the conclusion
$\opa(A_1, \dots, A_n), \Gamma \fCenter \Delta$. The $\Right\opa$ rule
has a premise set corresponding to $\C(\opa)^+$ and conclusion is
$\Gamma \fCenter \Delta, \opa(A_1, \dots, A_n)$. Rules have the
general forms
\[
  \Axiom$\dots \quad \Pi_i, \Gamma \fCenter \Delta, \Lambda_i \quad \dots$
  \RightLabel{$\Right\opa$}
  \UnaryInf$\Gamma \fCenter \Delta, \opa(\vec A)$
  \DisplayProof
  \quad\text{and}\quad
\Axiom$\dots \quad \Pi_i, \Gamma \fCenter \Delta, \Lambda_i \quad \dots$
\RightLabel{$\Left\opa$}
\UnaryInf$\opa(\vec A), \Gamma \fCenter \Delta$
\DisplayProof
\]
where the formulas in $\Pi_i$ and $\Lambda_i$ in $\Right\opa$ correspond to
$\C(\opa)^+$ and those in $\Left\opa$ to $\C(\opa)^-$.  We call the
occurrence of $\opa(\vec A)$ in the conclusion of such a rule the
\emph{principal} formula, the formulas in $\Pi_i$ and $\Lambda_i$ (or
$\Pi_i'$ and $\Lambda_i'$) the \emph{auxiliary} formulas, and those in
$\Gamma$ and $\Delta$ the \emph{side} formulas.

\begin{table}
  \caption{Clause sets and rules for the usual connectives}
\begin{minipage}{\textwidth}
\[
\begin{array}{clc}
\hline\hline
  \text{connective} & \text{\textsc{cnf}s} \,/\, \C(\opa)^+,
  \C(\opa)^- & \text{rules}\\ \hline\\ 
\land & 
\begin{array}{l}
A \land B \\
\{\fCenter A; \fCenter B\}
\end{array} & 
\Axiom$\Gamma \fCenter \Delta, A$
\Axiom$\Gamma \fCenter \Delta, B$
\RightLabel{$\Right\land$}
\BinaryInf$\Gamma \fCenter \Delta, A \land B$
\DisplayProof
 \\[2ex] &
\begin{array}{l}
\lnot A \lor \lnot B \\
\{A, B \fCenter\}
\end{array} & 
\Axiom$A, B, \Gamma \fCenter \Delta$
\RightLabel{$\Left\land$}
\UnaryInf$A \land B, \Gamma \fCenter \Delta$
\DisplayProof \\[2ex]
\lor & 
\begin{array}{l}
A \lor B \\
\{\fCenter A, B\}
\end{array}
 & 
\Axiom$\Gamma \fCenter \Delta, A, B$
\RightLabel{$\Right\lor$}
\UnaryInf$\Gamma \fCenter \Delta, A \lor B$
\DisplayProof
    \\[2ex] &
\begin{array}{l}
\lnot A \land \lnot B \\
 \{A \fCenter; B \fCenter\}
 \end{array}
  &
\Axiom$A, \Gamma \fCenter \Delta$
\Axiom$B, \Gamma \fCenter \Delta$
\RightLabel{$\Left\lor$}
\BinaryInf$A \lor B, \Gamma \fCenter \Delta$
\DisplayProof\\
\lif & 
\begin{array}{l}
\lnot A \lor B \\
\{A \fCenter B\} 
\end{array}
& 
\Axiom$A, \Gamma \fCenter \Delta, B$
\RightLabel{$\Right\lif$}
\UnaryInf$\Gamma \fCenter \Delta, A \lif B$
\DisplayProof
 \\[2ex] 
& 
\begin{array}{l}
A \land \lnot B \\
 \{\fCenter A; B \fCenter\} 
 \end{array}
 & 
\Axiom$\Gamma \fCenter \Delta, A$
\Axiom$B, \Gamma \fCenter \Delta$
\RightLabel{$\Left\lif$}
\BinaryInf$A \lif B, \Gamma \fCenter \Delta$
\DisplayProof\\[2ex]
\hline\hline
\end{array}
\]
\end{minipage}
\end{table}

\begin{table}
  \caption{Clause sets and rules for some unusual connectives}
  \label{unusual}
$
\begin{array}{clc}
\hline\hline
\text{connective} & \text{\textsc{cnf}s} \,/\, \C(\opa)^+,
  \C(\opa)^- & \text{rule}\\ \hline\\ 
A \nif B & 
\begin{array}{l}
A \land \lnot B \\
 \{\fCenter A; B \fCenter\}
 \end{array}
 & 
\Axiom$\Gamma \fCenter \Delta, A$
\Axiom$B, \Gamma \fCenter \Delta$
\RightLabel{$\Right\nif$}
\BinaryInf$\Gamma \fCenter \Delta, A \nif B$
\DisplayProof\\[2ex]
\text{(exclusion)} & 
\begin{array}{l}
\lnot A \lor B \\
  \{A \fCenter B\}
  \end{array}
 & 
\Axiom$A, \Gamma \fCenter \Delta, B$
\RightLabel{$\Left\nif$}
\UnaryInf$A \nif B, \Gamma \fCenter \Delta$
\DisplayProof \\[2ex] 
A \mid B & 
\begin{array}{l}
\lnot A \lor \lnot B \\
 \{A, B \fCenter\}
 \end{array}
 & 
\Axiom$  A, B, \Gamma \fCenter \Delta$
\RightLabel{$\Right\mid$}
\UnaryInf$\Gamma \fCenter \Delta, A \mid B$
\DisplayProof \\[2ex]
\text{(nand)} & 
\begin{array}{l}
A \land B \\
  \{\fCenter A; \fCenter B\} 
  \end{array}
& 
\Axiom$\Gamma \fCenter \Delta, A$
\Axiom$\Gamma \fCenter \Delta, B$ 
\RightLabel{$\Left\mid$}
\BinaryInf$A \mid B, \Gamma \fCenter \Delta$
\DisplayProof \\[2ex]
A \nor B & 
\begin{array}{l}
\lnot A \land \lnot B \\
 \{A \fCenter; B \fCenter\} 
\end{array}
& 
\Axiom$A, \Gamma \fCenter \Delta$
\Axiom$B, \Gamma \fCenter \Delta$
\RightLabel{$\Right\nor$}
\BinaryInf$\Gamma \fCenter \Delta, A \nor B$
\DisplayProof \\[2ex]
\text{(nor)} & 
\begin{array}{l}
A \lor B \\
  \{\fCenter A, B\} 
\end{array}
& 
\Axiom$\Gamma \fCenter \Delta, A, B$
\RightLabel{$\Left\nor$}
\UnaryInf$A \nor B, \Gamma \fCenter \Delta$
\DisplayProof \\[2ex]
A \xor B & 
\begin{array}{l}
(A \lor B) \land (\lnot A \lor \lnot B) \\
\{\fCenter A, B; A, B \fCenter\} 
\end{array}
& 
  \Axiom$\Gamma \fCenter \Delta, A, B$
  \Axiom$A, B, \Gamma \fCenter \Delta$
\RightLabel{$\Right\xor$}
\BinaryInf$\Gamma \fCenter \Delta, A \xor B$
\DisplayProof \\[2ex]
\text{(xor)} & 
\begin{array}{l}
(A \lor \lnot B) \land (\lnot A \lor B) \\
\{A \fCenter B; B \fCenter A\}
\end{array}
 & 
\Axiom$A, \Gamma \fCenter \Delta, B$
\Axiom$B, \Gamma \fCenter \Delta, A$
\RightLabel{$\Left\xor$}
\BinaryInf$A \xor B, \Gamma \fCenter \Delta$
\DisplayProof \\[2ex]
A \lif B / C & 
\begin{array}{l}
(\lnot A \lor B) \land (A \lor C) \\
\{A \fCenter, B; \fCenter A, C\}
\end{array}
 & 
\Axiom$A, \Gamma \fCenter \Delta, B$
\Axiom$\Gamma \fCenter \Delta, A, C$
\RightLabel{$\Right{\ite}$}
\BinaryInf$\Gamma \fCenter \Delta, A \lif B/C$
\DisplayProof\\[2ex]
\text{(if then else)} & 
\begin{array}{l}
(\lnot A \lor \lnot B) \land (A \lor \lnot C) \\
\{A, B \fCenter; C \fCenter A\} 
\end{array}
& 
\Axiom$A, B, \Gamma \fCenter \Delta$
\Axiom$C, \Gamma \fCenter \Delta, A$
\RightLabel{$\Left{\ite}$}
\BinaryInf$A \lif B/C, \Gamma \fCenter \Delta$
\DisplayProof\\[2ex]
\hline\hline
\end{array}
$
\end{table}

Suppose we have a set $X$ of connectives $\opa$ with associated $\Left\opa$ and $\Right\opa$ rules.  The calculus consisting of these rules, the
usual structural rules (weakening, contraction, exchange, cut), and
initial sequents $A \fCenter A$ is the classical sequent
calculus~$\L X$ for those connectives and rules.

\begin{prop}
$\L X$ with rule pairs for each connective as defined above is
  sound and complete.
\end{prop}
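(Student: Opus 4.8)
The plan is to prove the two halves separately, reading \emph{sound} as: every derivable sequent is valid, where a sequent $\Gamma \fCenter \Delta$ is \emph{valid} if every valuation $\M$ that satisfies all of $\Gamma$ satisfies at least one formula of $\Delta$; and \emph{complete} as the converse. Both directions will rest on a single bridging fact about the clause sets: for any valuation $\M$, $\M \models \opa(\vec A)$ iff every clause of $\C(\opa)^+$ is satisfied by $\M$, and $\M \not\models \opa(\vec A)$ iff every clause of $\C(\opa)^-$ is satisfied by $\M$. The decisive observation linking this to the rules is that a clause $C_i$, with its negative atoms collected in $\Pi_i$ and its positive atoms in $\Lambda_i$, is \emph{not} satisfied by $\M$ exactly when $\M \models A$ for every $A \in \Pi_i$ and $\M \not\models A$ for every $A \in \Lambda_i$; that is, exactly when $\M$ satisfies the antecedent auxiliaries and falsifies the succedent auxiliaries of the corresponding premise $\Pi_i, \Gamma \fCenter \Delta, \Lambda_i$.

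For soundness I would induct on the height of a derivation. Initial sequents $A \fCenter A$ are valid, and the structural rules and $\cut$ preserve validity by the usual one-line arguments. For $\Right\opa$, suppose all premises are valid and that $\M$ satisfies $\Gamma$ but no formula of $\Delta$; I must show $\M \models \opa(\vec A)$. By the bridging fact it suffices that every clause of $\C(\opa)^+$ is satisfied. Fix a clause $C_i$ and its premise $\Pi_i, \Gamma \fCenter \Delta, \Lambda_i$. If $\M$ falsifies some $A \in \Pi_i$ or satisfies some $A \in \Lambda_i$ then $C_i$ is satisfied directly; otherwise $\M$ satisfies $\Pi_i, \Gamma$ and falsifies everything in $\Delta, \Lambda_i$, contradicting validity of that premise. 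Hence every $C_i$ is satisfied and $\M \models \opa(\vec A)$. The argument for $\Left\opa$ is dual: $\M \models \opa(\vec A)$ forces some clause of $\C(\opa)^-$ to be unsatisfied, hence its premise to be falsified, again contradicting validity.

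For completeness I would argue contrapositively by a terminating root-first reduction. Given $\Gamma \fCenter \Delta$ containing a compound formula, pick one occurrence and apply the matching logical rule backwards, generating its finite premise set; iterate on every premise. Since each step deletes a principal occurrence $\opa(\vec A)$ and reintroduces only its immediate subformulas, a well-founded measure (e.g.\ the multiset of formula complexities) strictly decreases, so the tree is finite and every leaf is a sequent of atomic formulas only. If every leaf shares an atom across $\fCenter$, then each leaf is derivable from an initial sequent by weakening and exchange, and reading the reduction tree forward yields a derivation of $\Gamma \fCenter \Delta$ in $\L X$ (indeed without $\cut$). If instead some leaf has disjoint antecedent and succedent, define $\M$ by setting every antecedent atom of that leaf true and all other atoms false, so that $\M$ falsifies the leaf. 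The same bridging fact, read in the premise-to-conclusion direction, shows every backward step preserves countermodels: if $\M$ satisfies the auxiliary antecedent and falsifies the auxiliary succedent of a premise $P_i$, then its clause $C_i$ is unsatisfied, so $\C(\opa)^{\pm}$ is unsatisfied, whence $\M$ is a countermodel of the conclusion. Propagating this along the branch from that leaf to the root, $\M$ falsifies $\Gamma \fCenter \Delta$, which is therefore not valid.

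I expect the main obstacle to be neither induction but the careful bookkeeping of the bridging fact for \emph{arbitrary} connectives: one must check, uniformly in $\opa$ and in the chosen conjunctive normal forms, that falsification of the auxiliary formulas of the premise attached to a clause coincides with that clause being unsatisfied, keeping straight which $A_j$ sit in $\Pi_i$ versus $\Lambda_i$ and that $\C(\opa)^+$ and $\C(\opa)^-$ encode $\opa$ and its negation respectively. Everything else---termination via a well-founded measure, reconstruction of the leaves from $A \fCenter A$, and the routine cases of the structural rules and $\cut$---is then mechanical.
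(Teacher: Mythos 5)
Your proposal is correct and takes essentially the same route as the paper, which proves soundness by a standard induction on derivations and completeness by extracting a countermodel from a failed cut-free proof search (deferring details to Theorem 3.2 of Baaz--Ferm\"uller--Zach 1994). Your ``bridging fact'' relating falsification of a premise to non-satisfaction of its clause is precisely the content that makes both directions of that standard argument go through, and like the paper's proof it yields completeness without the cut rule.
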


\begin{proof}
  Soundness by a standard inductive proof, completeness by the usual
  construction of a countermodel from a failed search for a cut-free
  proof. See Theorem 3.2 of \cite{BaazFermullerZach1994} for details.
  (Note that this proof yields completeness without the cut rule.)
\end{proof}

When comparing the systems $\L X$ with natural deduction systems, it
will be useful to start from a slight variant of $\L X$ in which the
side formulas are not required to be identical in the premises.
We replace an $\L X$ rule 
\[
\Axiom$\Pi_1, \Gamma \fCenter \Delta, \Lambda_1$
\AxiomC{\dots}
\Axiom$\Pi_n, \Gamma \fCenter \Delta, \Lambda_n$
\RightLabel{$\Right\opa$}
\TrinaryInf$\Gamma \fCenter \Delta, \opa(\vec A)$
\DisplayProof
\]
by
\[
\Axiom$\Pi_1, \Gamma_1 \fCenter \Delta_1, \Lambda_1$
\AxiomC{$\dots$}
\Axiom$\Pi_n, \Gamma_n \fCenter \Delta_n, \Lambda_n$
\RightLabel{$\Right\opa^c$}
\TrinaryInf$\Gamma_1, \dots, \Gamma_n \fCenter 
  \Delta_1, \dots, \Delta_n, \opa(\vec A)$
\DisplayProof
\]
to obtain the sequent calculus with independent contexts~$\Lc X$.

\begin{prop}
$\L X$ and $\Lc X$ are equivalent.
\end{prop}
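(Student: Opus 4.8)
The plan is to read ``equivalent'' as: a sequent $\Gamma \fCenter \Delta$ is derivable in $\L X$ if and only if it is derivable in $\Lc X$. Since the two calculi share the same initial sequents $A \fCenter A$ and the same structural rules, and differ only in whether the logical rules carry shared or independent side contexts, I would prove both inclusions by induction on the height of a derivation, translating each inference locally: axioms and structural inferences are carried over unchanged, and only the logical inferences are rewritten.

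For the direction $\L X \Rightarrow \Lc X$, observe that an $\L X$ logical inference is essentially the special case of the corresponding $\Lc X$ inference in which all the independent contexts are chosen equal, $\Gamma_i = \Gamma$ and $\Delta_i = \Delta$. Applying $\Right\opa^c$ (resp.\ $\Left\opa^c$) to premises $\Pi_i, \Gamma \fCenter \Delta, \Lambda_i$ yields the conclusion $\Gamma, \dots, \Gamma \fCenter \Delta, \dots, \Delta, \opa(\vec A)$ with $n$ copies each of $\Gamma$ and $\Delta$; a block of contractions and exchanges then collapses these to the single $\L X$ conclusion $\Gamma \fCenter \Delta, \opa(\vec A)$.

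For the converse $\Lc X \Rightarrow \L X$, suppose the last inference is an $\Lc X$ rule with premises $\Pi_i, \Gamma_i \fCenter \Delta_i, \Lambda_i$, which by the inductive hypothesis are available as $\L X$ derivations. Set $\Gamma = \Gamma_1, \dots, \Gamma_n$ and $\Delta = \Delta_1, \dots, \Delta_n$ to be the concatenated contexts. Weakening (and reordering by exchange) each premise supplies the missing contexts $\Gamma_j, \Delta_j$ for $j \neq i$, turning it into $\Pi_i, \Gamma \fCenter \Delta, \Lambda_i$. The premises now share the uniform side contexts demanded by the $\L X$ schema, so a single application of $\Right\opa$ (resp.\ $\Left\opa$) produces $\Gamma \fCenter \Delta, \opa(\vec A)$, which is precisely the $\Lc X$ conclusion $\Gamma_1, \dots, \Gamma_n \fCenter \Delta_1, \dots, \Delta_n, \opa(\vec A)$.

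Both translations are purely local and invoke only the structural rules already present in each calculus, so the induction goes through uniformly for every $\opa \in X$. I do not expect a genuine obstacle here; the only points needing care are bookkeeping ones, namely tracking the exchanges required to place weakened or contracted formulas in the positions the rule schemata prescribe, and noting that the auxiliary formulas $\Pi_i, \Lambda_i$ are left untouched by these weakenings and contractions and hence remain available for the logical inference.
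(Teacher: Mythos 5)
Your proposal is correct and matches the paper's own proof, which likewise translates $\L X$ into $\Lc X$ by applying the independent-context rule with equal contexts and then contracting, and translates $\Lc X$ into $\L X$ by weakening each premise up to the concatenated context before applying the shared-context rule (with exchanges for bookkeeping in both directions). You have simply spelled out the induction and the local rewriting that the paper leaves implicit.
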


\begin{proof}
A proof in $\L X$ can be translated into one in $\Lc X$ by adding
contractions (and exchanges), one in $\Lc X$ can be translated into a
proof in $\L X$ by adding weakenings (and exchanges).
\end{proof}

\section{Splitting rules}
\label{sec:splitting}

The sequent calculi usually considered differ from the calculi~$\L X$
as constructed above in that some of the rules are ``split.'' This is
the case in particular for the $\Right\lor$ rule in~$\LK$.  In this case,
we replace the single rule with premise clause $\fCenter A, B$ by two
rules, each with a single premise $\fCenter A$ or $\fCenter B$.  The
availability of split rules is crucial if one wants to consider a
variant of the sequent calculus with one side restricted to at most
one formula, and when considering natural deduction systems, where the
conclusion is also likewise restricted to a single formula. In such
systems, we need rules where the premises satisfy the restriction as
well.

Suppose we have a rule for $\opa$ with premise set corresponding to a
set of clauses~$\C$, and let $C \in \C$ be the clause corresponding to
a single premise of the rule. We can partition the clause~$C$ into two
parts: $C = C_1 \cup C_2$, where $C_2 = \{L_1, \dots, L_k\}$ and
consider the clause sets $\C_i = (\C\setminus \{C\}) \cup \{C_1 \cup\{
L_i\}\}$.  We can then consider the variant calculus~$\L X^*$ with the
rule corresponding to $\C$ replaced by the rules corresponding to all
the ~$\C_i$.

\begin{prop}
$\L X$ and $\L X^*$ are equivalent.
\end{prop}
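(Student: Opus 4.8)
The plan is to prove that $\L X$ and $\L X^*$ derive exactly the same sequents by transforming proofs locally, in the same spirit as the two preceding propositions: I would replace each application of a split rule (for some $\C_i$) by the original rule preceded by weakenings, and conversely replace each application of the original rule (for $\C$) by a cascade of split rules surrounded by weakenings and a final contraction. Since every rule other than the one being split is common to $\L X$ and $\L X^*$, and the structural rules and initial sequents are shared, it suffices to treat the single split rule; by symmetry I may assume it is a $\Right\opa$ rule, and I write $Q$ for the premise corresponding to the split clause $C = C_1 \cup \{L_1, \dots, L_k\}$ and $P_1, \dots, P_m$ for the premises corresponding to the remaining clauses of $\C$.

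First I would handle the easy direction $\L X^* \Rightarrow \L X$. Each $\C_i$-rule shares the premises $P_j$ and the conclusion with the original rule, and its remaining premise, corresponding to the clause $C_1 \cup \{L_i\}$, is obtained from $Q$ simply by deleting the auxiliary formulas contributed by the literals $L_j$ with $j \ne i$. Hence one application of a $\C_i$-rule is simulated in $\L X$ by weakening that premise back up to $Q$, reintroducing on the appropriate sides exactly the auxiliary formulas for the $L_j$ with $j \ne i$, and then applying the original rule. Threading this replacement through a given $\L X^*$-derivation yields an $\L X$-derivation of the same endsequent.

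The hard part is the converse direction $\L X \Rightarrow \L X^*$, where I must recover the unsplit rule from the split ones. I would apply $\C_1, \dots, \C_k$ in succession: apply $\C_1$ to $Q$, keeping the auxiliary formulas of $L_2, \dots, L_k$ as side formulas, then apply $\C_2$ to the result, and so on, finally contracting the $k$ resulting copies of the principal formula $\opa(\vec A)$ into one. The obstacle is that each split rule consumes not only one $L_i$ but also the whole common part $C_1$ as auxiliary formulas; so after the first application the formulas coming from $C_1$ are already gone and the next split rule can no longer be applied. The remedy is to create enough copies in advance: before starting the cascade I would weaken $Q$ so that each auxiliary formula coming from $C_1$ occurs $k$ times, reserving one copy for each of the $k$ split applications, while the formulas for $L_1, \dots, L_k$ already occur once each in $Q$, which is all that is needed. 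Each application of $\C_i$ then consumes one fresh copy of the $C_1$-auxiliaries together with the formula for $L_i$, leaves the copies needed later untouched, and carries the previously produced occurrences of $\opa(\vec A)$ along as side formulas; the premises $P_j$ are supplied at each stage by reusing their derivations and weakening them to match the current side context. After the $k$th application all copies are exhausted and the endsequent carries exactly $k$ occurrences of $\opa(\vec A)$, which a contraction collapses to the required conclusion $\Gamma \fCenter \Delta, \opa(\vec A)$. When the split rule is instead a $\Left\opa$ rule the argument is identical with antecedent and succedent interchanged and the final contraction applied to the occurrences of $\opa(\vec A)$ in the antecedent.
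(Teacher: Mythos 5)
Your proof is correct and follows the same route as the paper's: simulate each split $\C_i$-rule in $\L X$ by weakening its premise back up to the unsplit premise and applying the original rule, and simulate the unsplit rule in $\L X^*$ by a cascade of split-rule applications followed by a contraction on the resulting copies of $\opa(\vec A)$. The only difference is that you explicitly arrange (by weakening) for the extra copies of the $C_1$-auxiliary formulas that the successive split applications consume, a bookkeeping point the paper's two-line proof leaves implicit.
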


\begin{proof}
We just have to show that an inference based on the rule for $\C$ can
be simulated by the rules for $\C_i$ and conversely.  An inference
using $\C_i$ can be simulated by an inference based on~$\C$ by
weakening the premise corresponding to $C_1 \cup \{L_i\}$ with the
rest of~$C_2$.  In the other direction, an inference based on $\C$
can be simulated by repeated application of $\C_i$ and contracting
multiple copies of $\opa(A_1, \dots, A_n)$ in the resulting conclusion
sequent.
\end{proof}

For instance, in the case of $\Right\lor$, the constructed rule with
premise clause $C = {} \fCenter A, B$ can be replaced with the two $\Right\lor$
rules with premises $\fCenter A$ and $\fCenter B$ (here $C_1 = \emptyset$
and $C_2 = \{A, B\}$).  An inference using $\Right\lor_1$ with premise
$\Gamma \fCenter \Delta, A$ can be replaced by
\[
\Axiom$\Gamma \fCenter \Delta, A$
\RightLabel{$\Right\Weak$}
\UnaryInf$\Gamma \fCenter \Delta, A, B$
\RightLabel{$\Right\lor$}
\UnaryInf$\Gamma \fCenter \Delta, A \lor B$
\DisplayProof
\]
An inference using $\Right\lor$ can be replaced by a proof segment using
$\Right\lor_1$ and $\Right\lor_2$:
\[
\Axiom$\Gamma \fCenter \Delta, A, B$
\RightLabel{$\Right\lor_2$}
\UnaryInf$\Gamma \fCenter \Delta, A, A \lor B$
\RightLabel{$\Right\Exch$}
\UnaryInf$\Gamma \fCenter \Delta, A \lor B, A$
\RightLabel{$\Right\lor_1$}
\UnaryInf$\Gamma \fCenter \Delta, A \lor B, A \lor B$
\RightLabel{$\Right\Contr$}
\UnaryInf$\Gamma \fCenter \Delta, A \lor B$
\DisplayProof
\]

We thus have:

\begin{prop}
If $\L X^*$ is obtained from $\L X$ by splitting a rule, it is sound
and complete iff $\L X$ is.
\end{prop}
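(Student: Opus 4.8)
The plan is to reduce the claim to the equivalence of $\L X$ and $\L X^*$ already established in the preceding proposition, together with the observation that validity is a purely semantic notion that does not depend on which of the two calculi we work in. Soundness of a calculus is the assertion that every derivable sequent is valid, and completeness is the converse assertion that every valid sequent is derivable; both are statements about the coincidence of the set of derivable sequents with the set of valid sequents. Splitting a rule changes neither the language nor the truth-functional semantics of the connectives, so the set of valid sequents is literally the same for $\L X$ and $\L X^*$.

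First I would invoke the preceding proposition to conclude that $\L X$ and $\L X^*$ derive exactly the same sequents. The simulations given there translate any $\L X$-derivation of a sequent $\Gamma \fCenter \Delta$ into an $\L X^*$-derivation of the same sequent, and conversely. Writing $\vdash_{\L X}$ and $\vdash_{\L X^*}$ for the respective derivability relations, this yields $\vdash_{\L X} S$ iff $\vdash_{\L X^*} S$ for every sequent $S$.

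Combining these two facts gives the biconditional directly. If $\L X$ is sound, then every $\L X^*$-derivable sequent is $\L X$-derivable and hence valid, so $\L X^*$ is sound; the converse is symmetric. If $\L X$ is complete, then every valid sequent is $\L X$-derivable and hence $\L X^*$-derivable, so $\L X^*$ is complete, and again symmetrically. Thus $\L X^*$ is sound (complete) iff $\L X$ is.

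The only point requiring care---and the one I would flag as the main obstacle, modest as it is---is confirming that the equivalence of the preceding proposition really is an equivalence of the derivability relations on arbitrary sequents, rather than merely an agreement on the formulas provable from the empty context. Here it matters that the two simulations are \emph{local}, replacing a single inference by a short proof segment using only the dual rules together with weakening, contraction, and exchange; because they go through for arbitrary side-formula contexts $\Gamma$, $\Delta$, they splice into any surrounding derivation and so lift to full derivations of the same end-sequent. Once this is checked, no further semantic argument is needed; in particular one need not verify separately that each individual split rule for $\C_i$ is sound, since its soundness follows \emph{a posteriori} from the soundness of $\L X$ via the translation.
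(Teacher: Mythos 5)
Your proposal is correct and matches the paper's intent exactly: the paper states this proposition with no separate proof (it is prefaced only by ``We thus have:''), treating it as an immediate consequence of the preceding equivalence proposition, which is precisely the reduction you carry out. Your additional remark that the simulations are local and hence preserve the full derivability relation on arbitrary sequents is a sound (if implicit in the paper) justification of that step.
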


\begin{cor}\label{intuitionistic-rules}
If $\L X$ is sound and complete, there is a sound and complete $\L
X^*$ in which every premise of every rule has at most one auxiliary
formula on the right.
\end{cor}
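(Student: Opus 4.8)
The plan is to apply the splitting operation introduced above repeatedly, each time to some premise that still carries more than one auxiliary formula on the right, splitting off \emph{all} of that premise's positive literals at once. Recall that the auxiliary formulas on the right of a premise $\Pi_i, \Gamma \fCenter \Delta, \Lambda_i$ are exactly the formulas in $\Lambda_i$, and that these correspond to the positive literals of the clause $C$ associated with the premise. Hence a premise has at most one auxiliary formula on the right precisely when its clause contains at most one positive literal, and the target is to reach a calculus in which every premise clause has this property.

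First I would pick a logical rule of $\L X$ one of whose premises corresponds to a clause $C$ with $k \ge 2$ positive literals. Taking $C_2 = \{L_1, \dots, L_k\}$ to be the set of these positive literals and $C_1$ to be the set of the remaining (negative) literals, so that $C = C_1 \cup C_2$, the splitting operation replaces that rule by the $k$ rules corresponding to the clause sets $\C_i = (\C \setminus \{C\}) \cup \{C_1 \cup \{L_i\}\}$. Each new clause $C_1 \cup \{L_i\}$ contains exactly one positive literal, and the other premises are left untouched, so this single step removes the offending premise. (For $\Right\lor$ this is precisely the passage from the clause $\fCenter A, B$ to the two clauses $\fCenter A$ and $\fCenter B$.) By the preceding Proposition each such step preserves both soundness and completeness, so the calculus reached at the end is sound and complete iff $\L X$ is, and since $\L X$ is assumed sound and complete, so is the result.

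What remains, and what I expect to be the only real obstacle, is termination. The difficulty is that splitting one premise of a rule duplicates \emph{all} of that rule's other premises across the $k$ new rules, so a naive count of offending premises can increase rather than decrease. The fix is to assign to each logical rule $\C$ the value $v(\C) = \sum_{C \in \C} p(C)$, where $p(C)$ is the number of positive literals in $C$, and to compare calculi by the Dershowitz--Manna multiset order on the multiset of the values $v(\C)$ of their logical rules. Splitting a rule for one connective leaves the rules for every other connective untouched, so it suffices to treat the (finitely many) rules descending from the two original rules of a single connective, and the relevant multiset stays finite. A single split as above removes one rule of value $k + s$, where $s$ is the number of positive literals occurring in the premises of $\C$ other than $C$, and adds $k$ rules each of value $1 + s$; since $k \ge 2$ we have $1 + s < k + s$, so the multiset strictly decreases. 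Because the multiset order over $\mathbb{N}$ is well founded, the iteration halts, and it can halt only when no premise clause has two or more positive literals, i.e., when every premise of every rule has at most one auxiliary formula on the right. The content of the argument is thus the choice of measure: the multiset order with weights $v(\C)$ absorbs the duplication of side premises, since each split trades one rule for several strictly lighter ones.
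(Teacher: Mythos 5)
Your proof is correct and follows essentially the same route as the paper's: repeatedly apply the splitting operation with $C_2$ taken to be the entire set of positive literals of an offending premise clause, so that each resulting clause has exactly one positive literal, and invoke the preceding proposition to preserve soundness and completeness. The explicit multiset-order termination argument is additional rigor that the paper leaves implicit, and it is correctly carried out.
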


\begin{proof}
By successively replacing every rule in which some premise contains
more than one auxiliary formula on the right by the split rules where $C_2$
contains all the positive clauses of the corresponding premise
clause.
\end{proof}

Note that this result does not establish that the calculus obtained
from $\L X^*$ by restricting all sequents to at most one formula on
the right proves the same (restricted) sequents as $\L X$.  It only
shows that for every $\L X$ there is at least a candidate calculus $\L
X^*$ where such a restriction would be possible as far as the logical
rules are concerned. The resulting calculus may have different
provable sequents. The example of $\LK$ and $\LJ$ illustrates this.

The set of rules obtained by splitting a rule of $\L X$ more than once
may result in redundant rules, e.g., those where a premise contains
an auxiliary formula $A$ on the left and another premise contains it
on the right. For instance, consider the $\Right\xor$ rule from
Table~\ref{unusual}. Fully splitting the rule would result in four
rules:
\[
\begin{array}{cc}
\Axiom$\Gamma \fCenter \Delta, A$
\Axiom$B, \Gamma \fCenter \Delta$
\RightLabel{$\Right\xor_1$}
\BinaryInf$\Gamma \fCenter \Delta, A \xor B$
\DisplayProof &
\Axiom$\Gamma \fCenter \Delta, B$
\Axiom$A, \Gamma \fCenter \Delta$ 
\RightLabel{$\Right\xor_2$}
\BinaryInf$\Gamma \fCenter \Delta, A \xor B$
\DisplayProof
\\[2ex]
\Axiom$\Gamma \fCenter \Delta, A$
\Axiom$A, \Gamma \fCenter \Delta$
\RightLabel{$\Right\xor_3$}
\BinaryInf$\Gamma \fCenter \Delta, A \xor B$
\DisplayProof
 &
\Axiom$\Gamma \fCenter \Delta, B$
\Axiom$B, \Gamma \fCenter \Delta$
\RightLabel{$\Right\xor_4$}
\BinaryInf$\Gamma \fCenter \Delta, A \xor B$
\DisplayProof
\end{array}
\]
The two bottom rules are superfluous: the un-split rule
can be simulated using the other two (and contractions):
\[
\Axiom$\Gamma \fCenter \Delta, A, B$
\Axiom$A \fCenter A$
\doubleLine
\UnaryInf$A, \Gamma \fCenter \Delta, A$
\RightLabel{$\Right\xor_2$}
\BinaryInf$\Gamma \fCenter \Delta, A, A \xor B$
\doubleLine
\UnaryInf$\Gamma \fCenter \Delta, A \xor B, A$
    \Axiom$B \fCenter B$
    \doubleLine
    \UnaryInf$B, \Gamma \fCenter \Delta, B$
    \Axiom$A, B, \Gamma \fCenter \Delta$
    \RightLabel{$\Right\xor_2$}
    \BinaryInf$B, \Gamma \fCenter \Delta, A \xor B$
\RightLabel{$\Right\xor_1$}
\BinaryInf$\Gamma \fCenter \Delta, A \xor B, A \xor B$
\RightLabel{$\Right\Contr$}
\UnaryInf$\Gamma \fCenter \Delta, A \xor B$
\DisplayProof\]

\begin{table}
  \caption{Split rules for some unusual connectives}
\[
\begin{array}{ccc}
\hline\hline
\text{connective} & \text{rules} \\ \hline\\ 
A \nif B & 
\Axiom$A, \Gamma \fCenter \Delta$
\RightLabel{$\Left\nif_1$}
\UnaryInf$A \nif B, \Gamma \fCenter \Delta$
\DisplayProof
& 
\Axiom$\Gamma \fCenter \Delta, B$
\RightLabel{$\Left\nif_2$}
\UnaryInf$A \nif B, \Gamma \fCenter \Delta$
\DisplayProof \\[2ex] 
A \mid B 
&  
\Axiom$A, \Gamma \fCenter \Delta$
\RightLabel{$\Right\mid_1$}
\UnaryInf$\Gamma \fCenter \Delta, A \mid B$
\DisplayProof
& 
\Axiom$B, \Gamma \fCenter \Delta$
\RightLabel{$\Right\mid_2$}
\UnaryInf$\Gamma \fCenter \Delta, A \mid B$
\DisplayProof \\[2ex]
A \nor B 
& 
\Axiom$    \Gamma \fCenter \Delta, A$
\RightLabel{$\Left\nor_1$}
\UnaryInf$A \nor B, \Gamma \fCenter \Delta$
\DisplayProof 
& 
\Axiom$\Gamma \fCenter \Delta, B$
\RightLabel{$\Left\nor_2$}
\UnaryInf$A \nor B, \Gamma \fCenter \Delta$
\DisplayProof \\[2ex]
A \xor B 
& 
\Axiom$\Gamma \fCenter \Delta, A$
\Axiom$B, \Gamma \fCenter \Delta$
\RightLabel{$\Right\xor_1$}
\BinaryInf$\Gamma \fCenter \Delta, A \xor B$
\DisplayProof
& 
\Axiom$\Gamma \fCenter \Delta, B$
\Axiom$A, \Gamma \fCenter \Delta$
\RightLabel{$\Right\xor_2$}
\BinaryInf$\Gamma \fCenter \Delta, A \xor B$
\DisplayProof\\[2ex]
& 
\Axiom$A, \Gamma \fCenter \Delta$
\Axiom$B, \Gamma \fCenter \Delta$
\RightLabel{$\Left\xor_1$}
\BinaryInf$A \xor B, \Gamma \fCenter \Delta$
\DisplayProof
& 
\Axiom$\Gamma \fCenter \Delta, B$
\Axiom$\Gamma \fCenter \Delta, A$
\RightLabel{$\Left\xor_2$}
\BinaryInf$A \xor B, \Gamma \fCenter \Delta$
\DisplayProof \\[2ex]
A \lif B / C 
& 
\Axiom$A, \Gamma \fCenter \Delta, B$
\Axiom$\Gamma \fCenter \Delta, A$
\RightLabel{$\Right\ite_1$}
\BinaryInf$\Gamma \fCenter \Delta, A \lif B/C$
\DisplayProof
& 
\Axiom$A, \Gamma \fCenter \Delta, B$
\Axiom$ \Gamma \fCenter \Delta, C$
\RightLabel{$\Right\ite_2$}
\BinaryInf$\Gamma \fCenter \Delta, A \lif B/C$
\DisplayProof\\[2ex]
& 
\Axiom$A, B, \Gamma \fCenter \Delta$
\Axiom$\Gamma \fCenter \Delta, A$
\RightLabel{$\Left\ite_1$}
\BinaryInf$A \lif B/C, \Gamma \fCenter \Delta$
\DisplayProof
& 
\Axiom$A, B, \Gamma \fCenter \Delta$
\Axiom$C, \Gamma \fCenter \Delta$
\RightLabel{$\Left\ite_2$}
\BinaryInf$A \lif B/C, \Gamma \fCenter \Delta$
\DisplayProof
\\[2ex]
\hline\hline
\end{array}
\]
\end{table}

\begin{cor}
  If $\L X$ is sound and complete, there is a sound and complete $\L
  X^{**}$ in which every premise of every rule has at most one auxiliary
  formula.
\end{cor}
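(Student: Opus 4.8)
The plan is to mirror the proof of Corollary~\ref{intuitionistic-rules}, the only change being that I now split off \emph{every} literal of an offending premise clause rather than only the positive ones. The engine of the argument is the preceding Proposition: a calculus obtained from $\L X$ by splitting a rule is sound and complete iff $\L X$ is. Since each single split preserves soundness and completeness, any finite iteration of splits applied to a sound and complete $\L X$ again yields a sound and complete calculus, so it suffices to exhibit a \emph{terminating} sequence of splits after which no premise has two or more auxiliary formulas.

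The first thing I would record is that in a premise $\Pi_i, \Gamma \fCenter \Delta, \Lambda_i$ the auxiliary formulas are exactly the literals of the associated clause $C$: the negative literals furnish $\Pi_i$ and the positive literals furnish $\Lambda_i$. Hence a premise has at most one auxiliary formula precisely when its clause is empty or a singleton, and the task reduces to transforming $\L X$ into an equivalent system all of whose premise clauses are singletons or empty.

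To do this I would, for each rule and each premise clause $C$ with $|C| \ge 2$, apply the splitting construction with $C_1 = \emptyset$ and $C_2 = C$, replacing the premise set $\C$ by the clause sets $\C_i = (\C \setminus \{C\}) \cup \{\{L_i\}\}$, one for each literal $L_i \in C$. In every resulting rule the premise that came from $C$ is now a single-literal premise, i.e. one with exactly one auxiliary formula (on the left if $L_i$ is negative, on the right if positive). Because splitting one clause leaves the remaining clauses of that premise set untouched, I would iterate the step on any clause of size $\ge 2$ still occurring in any rule.

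The one point that genuinely needs care, and which I expect to be the main obstacle, is termination, since each split \emph{multiplies} the number of rules. I would control this by attaching to each premise set the measure $\mu(\C) = \sum_{C \in \C} \max(|C| - 1, 0)$, a nonnegative integer: splitting a clause of size $m \ge 2$ replaces a rule of measure $\mu$ by finitely many rules each of measure $\mu - (m - 1) < \mu$, while the other rules are unchanged, so no infinite chain of splits is possible and the procedure halts. At termination every premise clause of every rule is empty or a singleton, so every premise has at most one auxiliary formula; calling this calculus $\L X^{**}$, the Proposition gives that it is sound and complete. Beyond this bookkeeping I anticipate no real difficulty: the construction may manufacture redundant rules (as already noted for $\xor$), but redundancy affects neither soundness nor completeness, and such rules may simply be discarded.
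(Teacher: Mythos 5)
Your proposal is correct and takes essentially the same approach the paper intends: the corollary follows by iterating the splitting proposition, now with $C_2$ equal to the entire offending clause (so $C_1=\emptyset$) rather than just its positive part, exactly as in the proof of the preceding corollary. The termination measure is a nice extra piece of care the paper omits (one can also see finiteness directly, since fully splitting a rule amounts to choosing one literal from each premise clause, i.e.\ passing from a \textsc{cnf} to a \textsc{dnf}), but it does not change the substance of the argument.
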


$\L X^{**}$ is a calculus with ``fully split'' rules.  In this case,
each premise of a rule corresponds to a single literal (positive if
the auxiliary formula appears on the right, negative if on the left).
A rule then corresponds to a conjunction of literals, and a set of
$\Right\opa$ rules to a \textsc{dnf} of $\opa(A_1, \dots, A_n)$ in the atomic
formulas~$A_i$.  Starting from a single $\Right\opa$ rule corresponding to
a \textsc{cnf}, the fully split set of rules corresponds to converting the \textsc{cnf}
to a \textsc{dnf} by distributing $\land$ over $\lor$.  The fully split rules
have the interesting property that when applied to initial sequents
they derive the sequent $\Gamma \fCenter \Delta, \opa(A_1, \dots, A_n)$
where $\Gamma$ contains $A_i$ iff $A_i$ appears as an auxiliary formula
on the right, and $\Delta$ contains $A_i$ iff $A_i$ appears as an
auxiliary formula on the left. If $\lnot$ is present, we even get a
derivation, using only $\Left\lnot$ and $\Right\opa$, of $\Pi \fCenter
\opa(\vec A)$ (which moreover does not contain more than one formula
on the right), where $\Pi = \Gamma, \lnot \Delta$. The set of all
$\Pi$ so obtained exactly describes the truth value assignments under
which $\opa(\vec A)$ is true.

Suppose now we have a sequent calculus with the usual structural rules
but only right rules for a connective $\opa$.  We can determine sound
and complete left rules for~$\opa$ by reverse engineering the rules
given.  Each $\Right\opa$ rule determines a set of clauses, which is
equivalent to a conjunctive normal form for $\opa(A_1, \dots, A_n)$ in
the arguments $A_i$, \dots, $A_n$.  If we have more than one
$\Right\opa$ rule, consider the disjunction of the corresponding
\textsc{cnf}s. Its negation also has a \textsc{cnf}, which yields an
$\Left\opa$ rule which together with the $\Right\opa$ rules is sound
and complete, as is the calculus resulting from the given $\Right\opa$
rules together with any rules obtained from $\Left\opa$ by splitting.

\section{Multi-conclusion natural deduction rules}
\label{sec:mcnd}

A sequent calculus $\L X$ is straightforwardly and systematically
related to a sequence-conclusion ``sequent-style'' natural deduction
calculus $\Nms X$ as follows. In $\Nms X$, the antecedent of a sequent
$\Gamma \fCenter \Delta$ is a multiset, not a sequence as in $\L X$. The
axioms of $\Nms X$ are initial sequents $A \fCenter A$.  Every $\Right\opa$
rule is also a rule in $\Nms X$, an \emph{introduction rule} for
$\opa$.  Every $\Left\opa$ rule corresponds to a \emph{general
  elimination rule} with the same premises as
$\Left\opa$ (the minor premises), an additional premise $\Gamma
\fCenter \Delta,
\opa(A_1, \ldots, A_n)$ (the major premise) and the conclusion $\Gamma
\fCenter \Delta$, where $\Gamma$ and $\Delta$ are the same schematic
context formulas as in the premises. As an example, consider the
conditional, $\lif$.  \textsc{cnf}s for $A \lif B$ and $\lnot(A \lif B)$ are
\begin{align*}
A \lif B & \text{ iff } \lnot A \lor B\\
\lnot(A \lif B) & \text{ iff } A \land \lnot B
\end{align*}
These correspond to the introduction and elimination rules
\[
\Axiom$A, \Gamma \fCenter \Delta, B$
\RightLabel{$\Intro\lif$}
\UnaryInf$\Gamma \fCenter \Delta, A \lif B$
\DisplayProof
\qquad
\Axiom$\Gamma \fCenter \Delta, A \lif B$
\Axiom$\Gamma \fCenter \Delta, A$
\Axiom$B, \Gamma \fCenter \Delta$
\RightLabel{$\Elim\lif$}
\TrinaryInf$\Gamma \fCenter \Delta$
\DisplayProof
\]
The formulas in the antecedent of a sequent in an $\Nms X$ derivation
are called \emph{open assumptions}.  If a schematic variable $A$ occurs
in the antecedent of a premise in an elimination rule, it does not
occur in the antecedent of the conclusion: the open assumption $A$ is
then said to be \emph{discharged} by the rule.

Any proof in $\L X$ can be translated into a proof in $\Nms X$.  A
$\Left\opa$ inference translates into a $\Elim\opa$ inference with major
premise $\opa(A_1, \dots, A_n), \Gamma \fCenter \Delta, \opa(A_1, \dots,
A_n)$.  This premise itself can be derived from $\opa(A_1, \dots, A_n)
\fCenter \opa(A_1, \dots, A_n)$ by weakening and exchanges alone.
Conversely, any proof in $\Nms X$ can be translated into an $\L X$
proof.  An $\Elim\opa$ inference is translated into a $\Left\opa$ inference
(with the minor premises as premises of the rule), followed by a cut
with the proof ending in the major premise.

The sequent-style natural deduction calculus~$\Nms X$ corresponds to a
natural deduction system in which proofs are trees of sequences of
formulas.\footnote{Such a system was first introduced by
\cite{Kutschera1962}, for a language with Sheffer stroke and universal
quantifier as the only primitives; he did not investigate normal
forms. \cite{Boricic1985} gave a system for the usual primitives and
provided explicit translations from and to the sequent calculus and a
normal form theorem. \cite{BaazFermullerZach1993b} independently
obtained generalized results of the same type for arbitrary $n$-valued
connectives and quantifiers. \cite{Cellucci1992} proved normalization
for a classical system including the $\varepsilon$-operator.
\cite{Parigot1992a} gave a system in which conclusions are sets of
``named'' formulas, and proved strong normalization using the
$\lambda\mu$-calculus. The system is related to the
multiple-conclusion calculi of \cite{ShoesmithSmiley1978}, although
their proofs have a different structure. In their systems, proofs are
graphs of formulas, and a rule may have more than one conclusion
formula, resulting in proofs that are not trees. Our system has trees
of sequents with more than one formula in the succedent, but naturally
yields a system of trees of sequences or multi-sets of formulas, and
every rule has a single sequence or multi-set of formulas as
conclusion.} Whenever there is a derivation of $\Delta$ from
undischarged assumptions $\Gamma$ there is a derivation of $\Gamma
\fCenter \Delta$ in $\Nms X$. Conversely, if $\Nms X$ proves $\Gamma
\fCenter \Delta$ then there is a derivation in $\Nm X$ of $\Gamma'
\fCenter \Delta$ with $\Gamma' \subseteq \Gamma$. (Note that in
natural deduction there is no equivalent to the left weakening rule,
hence in general we can only get $\Gamma' \subseteq \Gamma$).


In the context of natural deductions (especially when considering the
Curry-Howard isomorphism) it is often necessary and helpful to
consider a version of natural deduction in which one has more control
over which elimination inferences discharge which assumptions, and to
have book-keeping information for this in the derivations themselves.
A corresponding sequent-style natural deduction calculus then will
have \emph{labels} on the formulas occurring in the antecedent (the
assumptions), and the antecedent is now considered a set, not a
multi-set, of labelled formulas. The antecedent of a sequent is also
often called the \emph{context}. The $\Left\Weak$ and $\Left\Contr$
rules are removed. Since the contexts are sets, contraction on the
left is implicit. For simplicity we will also consider the consequent
to be a \emph{multi}-set of formulas (i.e., exchanges are implicit and
the $\Right\Exch$ rule is removed). In order to make the comparison
with type systems easier, we will use variables~$x$ as these labels,
and write a labelled assumption formula as $x \colon A$.  An initial
sequent then is a sequent of the form
\[
x \colon A \fCenter A
\]
We replace the introduction and elimination rules by rules in which
the side formulas are not required to be shared.  In other words,
we replace a rule
\[
\Axiom$\dots \quad \Pi_i, \Gamma \fCenter \Delta, \Lambda_ i \quad \dots$ 
\RightLabel{$\Intro\opa$}
\UnaryInf$\Gamma \fCenter \Delta$
\DisplayProof
\quad\text{by}\quad
\Axiom$\dots \quad \Pi_i, \Gamma_i \fCenter \Delta_i, \Lambda_i \quad \dots$
\RightLabel{$\Intro\opa$}
\UnaryInf$\Gamma_1, \dots, \Gamma_n \fCenter \Delta_1, \dots, \Delta_n$
\DisplayProof
\]
Since the $\Left\Weak$ rule is removed, we must allow \emph{vacuous discharge}
of assumptions: a rule application is correct even if the
discharged assumptions do not all appear in the premises.
In other words, an application of a logical rule
\[
\Axiom$\Gamma_0 \fCenter \Delta, \opa(\vec A)$
\AxiomC{$\Pi_1, \Gamma_1 \fCenter \Delta_1, \Lambda_1 \quad 
  \dots \quad
  \Pi_n, \Gamma_n \fCenter \Delta_n, \Lambda_n$}
\RightLabel{$\Elim\opa \colon l$}
\BinaryInf$\Gamma_0, \dots, \Gamma_n \fCenter 
  \Delta_1, \dots, \Delta_n$
  \DisplayProof
\]
may discharge any number of assumptions mentioned in the schematic
form of the rule, including zero.  The labels of assumptions
discharged in an application are listed with the rule.  Formulas
instantiating the same schematic variable must have the same label.
For instance, the $\Elim\lif$ rule now becomes:
\[
\Axiom$\Gamma_0 \fCenter \Delta_0, A \lif B$
\Axiom$\Gamma_1 \fCenter \Delta_1, A$
\Axiom$x \colon B, \Gamma_2 \fCenter \Delta_2$
\RightLabel{$\Elim\lif \colon x$}
\TrinaryInf$\Gamma_0, \Gamma_1, \Gamma_2 \fCenter 
  \Delta_0, \Delta_1, \Delta_2$
  \DisplayProof
\]

A correct derivation is a tree of sequents starting in initial
sequents in which every inference is correct according to the new
definition.  We denote the new system $\Nmsl X$.

\begin{table}
  \caption{Multi-conclusion natural deduction rules for some unusual connectives}
\[
\begin{array}{cllc}
\hline\hline
\text{connective} & \text{intro/elim rule} \\ \hline\\ 
A \nif B & 
\Axiom$\Gamma_1 \fCenter \Delta_1, A$
\Axiom$B, \Gamma_2 \fCenter \Delta_2$
\RightLabel{$\Intro\nif$}
\BinaryInf$\Gamma_1, \Gamma_2 \fCenter \Delta_1, \Delta_2, A \nif B$
\DisplayProof\\[2ex]
& 
\Axiom$\Gamma_0 \fCenter \Delta_0, A \nif B$
\Axiom$A, \Gamma_1 \fCenter \Delta_1, B$
\RightLabel{$\Elim\nif$}
\BinaryInf$\Gamma_0, \Gamma_1 \fCenter \Delta_0, \Delta_1$
\DisplayProof \\[2ex] 
A \mid B &  
\Axiom$A, B, \Gamma_1 \fCenter \Delta_1$
\RightLabel{$\Intro\mid$}
\UnaryInf$\Gamma_1 \fCenter \Delta_1, A \mid B$
\DisplayProof \\[2ex]
& 
\Axiom$\Gamma_0 \fCenter \Delta_0, A \mid B$
\Axiom$\Gamma_1 \fCenter \Delta_1, A$ 
\Axiom$\Gamma_2 \fCenter \Delta_2, B$ 
\RightLabel{$\Elim\mid$}
\TrinaryInf$\Gamma_0, \Gamma_1, \Gamma_2 \fCenter \Delta_0, \Delta_1, \Delta_2$
\DisplayProof\\[2ex]
A \nor B &  
\Axiom$A, \Gamma_1 \fCenter \Delta_1$
\Axiom$B, \Gamma_2 \fCenter \Delta_2$
\RightLabel{$\Intro\nor$}
\BinaryInf$\Gamma_1, \Gamma_2 \fCenter \Delta_1, \Delta_2, A \nor B$
\DisplayProof\\[2ex]
& 
\Axiom$\Gamma_0 \fCenter \Delta_0, A \nor B$
\Axiom$\Gamma_1 \fCenter \Delta_2, A, B$
\RightLabel{$\Elim\nor$}
\BinaryInf$\Gamma_0, \Gamma_1 \fCenter \Delta_0, \Delta_1$
\DisplayProof \\[2ex]
A \xor B &
\Axiom$\Gamma_1 \fCenter \Delta_1, A, B$
\Axiom$A, B, \Gamma_2 \fCenter \Delta_2$
\RightLabel{$\Intro\xor$}
\BinaryInf$\Gamma_1, \Gamma_2 \fCenter \Delta_1, \Delta_2, A \xor B$
\DisplayProof \\[2ex]
& 
\Axiom$\Gamma_0 \fCenter \Delta_0, A \xor B$
\Axiom$A, \Gamma_1 \fCenter \Delta_1, B$
\Axiom$B, \Gamma_2 \fCenter \Delta_2, A$
\RightLabel{$\Elim\xor$}
\TrinaryInf$\Gamma_0, \Gamma_1, \Gamma_2 \fCenter \Delta_0, \Delta_1, \Delta_2$
\DisplayProof \\[2ex]
A \lif B / C & 
\Axiom$A, \Gamma_1 \fCenter \Delta_1, B$
\Axiom$\Gamma_2 \fCenter \Delta_2, A, C$ 
\RightLabel{$\Intro\ite$}
\BinaryInf$\Gamma_1,\Gamma_2 \fCenter \Delta_1, \Delta_2, A \lif B/C$
\DisplayProof\\[2ex]
&
\Axiom$\Gamma_0 \fCenter \Delta_0, A \lif B/C$
\Axiom$A, B, \Gamma_1 \fCenter \Delta_1$
\Axiom$C, \Gamma_2 \fCenter \Delta_2, A$
\RightLabel{$\Elim\ite$}
\TrinaryInf$\Gamma_0, \Gamma_1, \Gamma_2 \fCenter \Delta_0, \Delta_1, \Delta_2$
\DisplayProof\\[2ex]
\hline\hline
\end{array}
\]
\end{table}

\begin{lem}
In any derivation~$\delta$ in $\Nmsl X$, we can replace a label~$x$ uniformly
by another label~$y$ not already used in the derivation, and obtain a
correct derivation~$\delta[y/x]$.
\end{lem}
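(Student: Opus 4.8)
The statement is a routine "renaming of bound variables / free labels" lemma, and the natural proof is by induction on the height of the derivation $\delta$. The key observation is that labels in $\Nmsl X$ play two roles: they mark which assumption-formula occurrences are identified (formulas instantiating the same schematic variable must carry the same label), and they record which assumptions an elimination inference discharges. A uniform substitution $[y/x]$ must preserve both roles, and because $y$ is chosen fresh (not occurring anywhere in $\delta$), it cannot collide with any existing label.

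The plan is to proceed as follows. First I would fix $y$ to be a label not occurring in $\delta$ at all, and define $\delta[y/x]$ to be the tree obtained by replacing every occurrence of the label $x$ — in every initial sequent $x\colon A \fCenter A$, in every labelled antecedent formula $x\colon A$, and in every discharge annotation attached to a rule — by $y$, leaving all formulas and all other labels untouched. Then I would verify by induction on the structure of $\delta$ that $\delta[y/x]$ is again a correct derivation. For the base case, an initial sequent $x\colon A \fCenter A$ becomes $y \colon A \fCenter A$, which is again an initial sequent (and any initial sequent not mentioning $x$ is unchanged). For the inductive step, I would consider each rule: the side-formula contexts $\Gamma_i, \Delta_i$ and the auxiliary/principal formulas are unaffected as \emph{formulas}, so the shape of every sequent relative to the rule schema is preserved; what must be checked is only the two label-conditions.

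For the label-conditions, the crucial point is that substitution is uniform and $y$ is fresh. Two antecedent occurrences carry the same label after substitution iff they carried the same label before (since $[y/x]$ is injective on the set of labels occurring in $\delta$, being the identity off $x$ and sending $x$ to the unused $y$), so the requirement that formulas instantiating the same schematic variable share a label is preserved, and distinct variables that had distinct labels keep distinct labels. For the discharge conditions, a label appears in a discharge annotation of an $\Elim\opa$ inference iff its renamed version does; since the set of assumptions actually occurring in the premises is mapped bijectively by $[y/x]$, an assumption is discharged after substitution exactly when the corresponding one was discharged before. Vacuous discharge remains vacuous, and a discharged assumption that did occur still occurs. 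Hence every inference correct in $\delta$ remains correct in $\delta[y/x]$.

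The only point requiring care — and the nearest thing to an obstacle — is the freshness hypothesis on $y$. If $y$ already occurred in $\delta$, the substitution could conflate two assumptions that were meant to be kept distinct (an existing $y\colon B$ with the relabelled $x\colon A$), breaking the "same label $\Rightarrow$ same schematic variable" discipline or silently changing which assumptions a rule discharges. Because the lemma assumes $y$ is not already used, this collision cannot arise, and the bijectivity of $[y/x]$ on the labels of $\delta$ that underlies every clause above is guaranteed. The induction then goes through with no genuine difficulty.
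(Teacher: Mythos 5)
Your proof is correct and follows the same route as the paper, which simply states ``by induction on the height of the derivation''; your version spells out the details of that induction, in particular the role of the freshness of $y$ in keeping the relabelling injective on labels and thus preserving the identification and discharge conditions. Nothing further is needed.
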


\begin{proof}
By induction on the height of the derivation.
\end{proof}

\begin{prop}
If $\delta$ is an $\Nmsl X$ derivation of $\Gamma \fCenter \Delta$,
there is a derivation in $\Nms X$ of $\Gamma' \fCenter \Delta$ where
$\Gamma'$ is $\Gamma$ with labels removed.
\end{prop}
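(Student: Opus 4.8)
The plan is to argue by induction on the height of $\delta$, translating each $\Nmsl X$ inference into the matching $\Nms X$ inference while erasing labels and inserting the structural inferences needed to bridge the two differences between the systems. These differences are: (i) the $\Nmsl X$ rules carry \emph{independent} side contexts $\Gamma_i, \Delta_i$ in their premises, which are merged (by set union on the left) in the conclusion, whereas the $\Nms X$ rules require a single \emph{shared} side context $\Gamma, \Delta$ across all premises; and (ii) $\Nmsl X$ permits vacuous discharge, whereas an $\Nms X$ rule names each discharged auxiliary formula explicitly in a premise. The key structural fact I will use is that $\Nms X$ has left and right weakening together with exchange---precisely the ingredients used above to derive $\opa(\vec A), \Gamma \fCenter \Delta, \opa(\vec A)$ from $\opa(\vec A) \fCenter \opa(\vec A)$, and the same ingredients that translate $\Lc X$ into $\L X$.

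For the base case, an initial sequent $x \colon A \fCenter A$ of $\Nmsl X$ maps to the initial sequent $A \fCenter A$ of $\Nms X$, and erasing the single label gives the desired conclusion. For the inductive step, suppose the last inference of $\delta$ is an $\Intro\opa$ or $\Elim\opa$ whose premises have the form $\Pi_i, \Gamma_i \fCenter \Delta_i, \Lambda_i$ (together with a major premise $\Gamma_0 \fCenter \Delta_0, \opa(\vec A)$ in the elimination case). By the inductive hypothesis each immediate subderivation translates to an $\Nms X$ derivation of the corresponding premise with its labels erased. Writing $\Gamma$ for the label-erasure of the merged antecedent $\bigcup_i \Gamma_i$ and $\Delta$ for $\Delta_0, \Delta_1, \dots, \Delta_n$, I weaken each translated premise on the left and right (and exchange) until its side context is exactly $\Gamma, \Delta$; if the inference discharges some $x \colon B$ vacuously, I additionally weaken $B$ into the antecedent of the relevant premise so that the $\Nms X$ rule, which lists $B$ explicitly, applies. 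Applying the matching $\Nms X$ rule to the adjusted premises then yields exactly $\Gamma \fCenter \Delta$ (or $\Gamma \fCenter \Delta, \opa(\vec A)$ for an introduction), which is the label-erasure of the conclusion of $\delta$. The right structural inferences ($\Right\Weak$, $\Right\Contr$) of $\Nmsl X$ translate to the identical inferences of $\Nms X$.

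The one delicate point---and the only real obstacle---is the bookkeeping for the antecedents, which are sets of labelled formulas in $\Nmsl X$ but multisets in $\Nms X$. One must check that merging the independently-contextualized premises by weakening reproduces exactly the label-erased conclusion context: an assumption shared under the \emph{same} label between two premises is identified in the $\Nmsl X$ conclusion and so must contribute a single copy, whereas two assumptions carrying \emph{distinct} labels on the same formula genuinely contribute two copies; likewise, the formula actually discharged must be distinguished, via its label, from a like-named side assumption that is to survive. Tracking labels through the induction is exactly what makes these multiplicities agree. In particular, since the conclusion antecedent is the set union of the premise antecedents, the multiplicity of each formula in $\Gamma$ is at least its multiplicity in each label-erased $\Gamma_i$, so $\Gamma$ dominates every premise context as a multiset; hence left and right weakening (with exchanges) suffice to align the contexts and no appeal to an explicit left-contraction rule is needed.
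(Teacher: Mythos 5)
Your proof is correct and follows essentially the same route as the paper's: induction on the height of $\delta$, erasing labels, padding the premises with weakenings and exchanges so that the independent contexts of the $\Nmsl X$ rule become the shared context required by the $\Nms X$ rule, and inserting a $\Left\Weak$ wherever a discharge is vacuous. The only (harmless) divergence is that the paper's one-line proof also invokes $\Left\Contr$ where required, whereas you weaken each premise directly up to the label-erasure of the merged (set-union) antecedent and correctly observe that this makes left contraction unnecessary.
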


\begin{proof}
By induction on the height of $\delta$. Simply remove labels, adding
$\Left\Contr$, $\Left\Exch$, and $\Right\Exch$ inferences where
required. For each inference in $\delta$ in which not all labelled
formulas $x \colon A$ are discharged, add a $\Left\Weak$ inference to
the corresponding premise before applying the rule.
\end{proof}

\begin{prop}
If $\delta$ is an $\Nms X$ derivation of $\Gamma \fCenter \Delta$, there
is a derivation $\delta'$ in $\Nmsl X$ of $\Gamma' \fCenter \Delta$
where if $x \colon A$ is a labelled occurrence of $A$ in $\Gamma'$,
$\Gamma$ contains~$A$.
\end{prop}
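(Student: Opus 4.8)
The plan is to argue by induction on the height of the $\Nms X$ derivation~$\delta$, translating each inference into a corresponding fragment of an $\Nmsl X$ derivation. If $\delta$ is an initial sequent $A \fCenter A$, I take the $\Nmsl X$ initial sequent $x \colon A \fCenter A$ with a fresh label~$x$; here $\Gamma' = \{x \colon A\}$ and indeed $\Gamma$ contains~$A$. For the inductive step I apply the inductive hypothesis to the immediate subderivations and reconstruct the last inference in $\Nmsl X$. Throughout I maintain the invariant that the succedent is reproduced \emph{exactly} while the labelled antecedent~$\Gamma'$ satisfies only the containment condition: every labelled formula $x \colon A$ in $\Gamma'$ has its underlying formula $A$ among the antecedent formulas of the corresponding $\Nms X$ sequent.

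The structural rules of $\Nms X$ are absorbed by the conventions built into $\Nmsl X$. Since antecedents are sets of labelled formulas, $\Left\Contr$ and $\Left\Exch$ require nothing, and since succedents are multisets, $\Right\Exch$ is likewise absorbed. A $\Left\Weak$ inference is simply omitted: the derivation supplied by the inductive hypothesis already derives the weaker antecedent, and discarding the weakened formula is precisely what degrades the conclusion from equality to $\Gamma' \subseteq \Gamma$. The structural rules $\Right\Weak$ and $\Right\Contr$, which are not among those removed in passing to $\Nmsl X$, are reproduced by the same inferences.

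For a logical inference I apply the inductive hypothesis to each premise's subderivation and then apply the matching $\Intro\opa$ or $\Elim\opa$ rule of $\Nmsl X$. Because these rules use independent contexts, I first use the relabelling lemma above to make the label sets of the several subderivations pairwise disjoint and disjoint from the labels I am about to use for discharge. An auxiliary formula in some $\Pi_i$ that the rule discharges may occur in the translated premise under several distinct labels, one for each leaf at which it was assumed; I therefore use the relabelling lemma to bring all these open occurrences of one and the same formula under a single fresh label, which the rule then discharges in one step. Combining the independent contexts reproduces the shared succedent~$\Delta$ once per premise, so I finish by applying $\Right\Contr$ to collapse the duplicated side formulas back to a single copy of~$\Delta$; after discharge, every surviving labelled antecedent formula still traces to~$\Gamma$, so the containment invariant is preserved.

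The main obstacle is exactly this discharge bookkeeping: in $\Nms X$ a discharged assumption is an unlabelled, possibly multiply occurring antecedent formula, whereas $\Nmsl X$ discharges by label, so I must consolidate all open occurrences of a discharged formula under one label before the rule is applied. Identifying several labels on the same formula is a harmless variant of the relabelling lemma --- its proof by induction on height goes through unchanged when the target label already occurs on the same formula, since the labels involved are all still open at the point of discharge and hence not constrained by any earlier inference. The asymmetry of the conclusion, exact~$\Delta$ on the right but only $\Gamma' \subseteq \Gamma$ on the left, simply reflects that $\Nmsl X$ keeps the right-hand structural rules but has no left weakening.
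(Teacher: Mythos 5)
Your proof is correct and arrives at the same result by a somewhat different bookkeeping route. The paper first performs a preliminary pass over $\delta$ that assigns a \emph{set} of labels to every antecedent formula occurrence ($\emptyset$ at $\Left\Weak$, the union of the two sets at $\Left\Contr$, disjoint sets across premises elsewhere) and then translates by induction using $\min(l)$ as the canonical label, carrying a three-part invariant relating $\Gamma$ and $\Gamma'$. You instead run a single induction and defer all merging to the moment a formula is discharged, consolidating the several open labels of one discharged formula under a single label by a strengthened relabelling lemma. The two mechanisms do the same work: your on-demand identification of $x_2 \colon A$ with $x_1 \colon A$ is exactly the paper's replacement of $\min(l') \colon A$ by $\min(l) \colon A$ throughout a premise subderivation, and your justification --- the labels in question are still open, hence occur in no discharge list, and merging two elements of a set-antecedent is just an implicit left contraction --- is the right one. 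Your version is in one respect more careful: you note explicitly that passing to independent contexts duplicates the shared succedent $\Delta$ once per premise and repair this with $\Right\Contr$, a step the paper leaves tacit. Two points you leave implicit but which your framework absorbs: when a discharged formula has no open occurrence at all (it entered only by $\Left\Weak$), the discharge must be read as vacuous, which $\Nmsl X$ permits by definition; and the $\cut$ rule of $\Nms X$, which likewise binds a label in its right premise, needs the same consolidation treatment as the logical rules. Neither affects correctness.
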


\begin{proof}
In a first pass, we assign sets of labels to the formulas in the
contexts in~$\delta$ inductively:

If $A \fCenter A$ is an initial sequent, replace it with $\{x\} \colon
A \fCenter A$ for some label $x$.

If $A$ is the weakened formula in a $\Left\Weak$ inference, assign
$\emptyset$ to it. 

If label sets $l_1$,and $l_2$, both $\neq \emptyset$, have been
assigned to the auxiliary formulas $A, A$ in a $\Left\Contr$
inference, replace every label set $l_1$ and $l_2$ in the derivation
ending in the premise by the set $l_1 \cup l_2$ and assign $l_1 \cup
l_2$ to $A$ in the conclusion.

In all other inferences, first uniformly replace labels throughout the
derivations ending in the premises to ensure that the label sets
appearing in any two derivations are disjoint.  Then assign the same
label sets to the formulas in the conclusion as the corresponding
formulas in the premises.

If $l$, $l'$ are two different label sets in the result, $l \cap l' =
\emptyset$. We may assume that the labels $x_i$ are linearly ordered;
let $\min(l)$ be the least label in~$l$ in this ordering.

We define the translation by induction on the height of the labelled
derivation~$\delta$.  The translation has the following property: if
the labelled $\delta$ ends in $\Gamma \fCenter \Delta$ and its
translation $\delta'$ ends in $\Gamma' \fCenter \Delta'$, then (a) if $l
\colon A \in \Gamma$ with $l \neq \emptyset$, then $\min(l) \colon A
\in \Gamma'$, and (b) if $x \colon A \in \Gamma'$ then for some $l$,
$l \colon A \in \Gamma$ with $x = \min(l)$, and (c) $\Gamma'$ contains
each $x \colon A$ at most once. In each step below, it is easily
verified that (a), (b), and (c) hold.

If $\delta$ is just an initial sequent $l \colon A \fCenter A$,
$\delta'$ is $\min(l) \colon A \fCenter A$. 

If $\delta$ ends in a $\Right\Weak$, $\Right\Contr$, or $\cut$ inference, apply $\Right\Weak$,
$\Right\Contr$, or $\cut$, respectively, to the translation of the premise.

If $\delta$ ends in $\Right\Exch$, or in a $\Left\Weak$ or
$\Left\Contr$ inference with principal or weakened formula $l \colon
A$, $\delta'$ is the translation of its premise (i.e., we remove left
weakenings and contractions). In the case of $\Left\Contr$, by
induction hypothesis, the antecedent of the translation of the premise
already contains $\min(l) \colon A$ at most once.

Suppose $\delta$ ends in a logical inference,
\[
\Axiom$\Gamma_0 \fCenter \Delta, \opa(\vec A)$
\AxiomC{$\Pi_1, \Gamma_1 \fCenter \Delta_1, \Lambda_1 
\quad  \dots \quad
  \Pi_n, \Gamma_n \fCenter \Delta_n, \Lambda_n$}
\RightLabel{$\Elim\opa$}
\BinaryInf$\Gamma_0, \Gamma_1, \dots, \Gamma_n \fCenter \Delta_0, \Delta_1, \dots, \Delta_n$
\DisplayProof
\]
Let $\delta_i'$ be the translations of the derivations ending in the
premises. Each $\delta'_i$ proves $\Pi_i', \Gamma_i' \fCenter \Delta_i,
\Lambda_i$ where $\Pi_i'$ is a set of labelled formulas.  For each $l
\colon A \in \Pi_i$, $\min(l) \colon A \in \Pi_i'$ iff $l \neq
\emptyset$, and if $x \colon A \in \Pi_i'$ then $x = \min(l)$.  If
different labelled formulas $l \colon A$ and $l' \colon A$ appear in
$\Pi_i$ and $\Pi_j$ but instantiate the same schematic formula of the
rule, then $\min(l) \colon A$ and $\min(l') \colon A$ appear in
$\Pi_i'$ and $\Pi_j'$.  Replace $\min(l') \colon A$ in the entire
derivation~$\delta_j'$ ending in the second premise by $\min(l) \colon
A$ everywhere it occurs. In this way we obtain $\delta_i''$ in which
labelled formulas $x \colon A$ which instantiate the same schematic
variable have the same label.  Adding $\Elim\opa$ to the
resulting $\delta_i''$ results in a correct inference, if we label the
inference with the list of labels appearing in the~$\Pi_i$.  Note that
when $\Elim\opa$ discharges a formula $A$ which is obtained by weakening,
it was originally labelled by $\emptyset$. It either still was
labelled with $\emptyset$ at the end, or its label set was combined
with that of another occurrence of $A$ with which it was later
contracted. In the first case it no longer appears in the premise of
the translated derivation, the corresponding inference thus vacuously
discharges~$A$.

The case of $\Intro\opa$ is treated the same way.
\end{proof}

\section{Specialized elimination rules}
\label{sec:specialize}

In order to get the familiar elimination rules for natural deduction,
the general elimination rules obtained from the $\Left\opa$ rule must be
simplified.  For instance, the general elimination rule for $\lif$
based on $\Left\lif$ is
\[
\Axiom$\Gamma \fCenter \Delta, A \lif B$
\Axiom$\Gamma \fCenter \Delta, A$
\Axiom$B, \Gamma \fCenter \Delta$
\RightLabel{$\Elim\lif$}
\TrinaryInf$\Gamma \fCenter \Delta$
\DisplayProof
\]
We can obtain a specialized rule by removing a premise which only
discharges a single assumption, and instead add the discharged assumption to
the conclusion.  In the case of $\lif$, for instance, we observe that
if $B \in \Delta$, the right minor premise $B, \Gamma \fCenter \Delta$
contains the initial sequent $B \fCenter B$ as a subsequent and is thus
always derivable.  The additional formula $B$ is then of course also
part of~$\Delta$ in the conclusion.
The simplified rule $\Elim\lif'$ is the familiar modus ponens rule:
\[
\Axiom$\Gamma \fCenter \Delta, A \lif B$ 
\Axiom$\Gamma \fCenter \Delta, A$
\RightLabel{$\Elim\lif'$}
\BinaryInf$\Gamma \fCenter \Delta, B$
\DisplayProof
\]

In general, a premise of $\Elim\opa$ of the form $A_i, \Gamma \fCenter
\Delta$ can be removed while $A_i$ is added to the right side of the
conclusion to obtain a simplified rule~$\Elim\opa'$.  For if $\Pi, \Gamma
\fCenter \Delta, \Lambda$ is derivable (the general form of any of the
premises), so is $\Pi, \Gamma \fCenter \Delta, A_i, \Lambda$ by $\Right\Weak$ and
$\Right\Exch$.  The sequent corresponding to the removed premise is $A_i,
\Gamma \fCenter \Delta, A_i$ which is derivable from the initial sequent
$A_i \fCenter A_i$ by weakenings and exchanges alone. E.g.,
\[
\Axiom$\Gamma \fCenter \Delta, \opa(A_1, \dots, A_n)$
\AxiomC{$\cdots \quad 
  \Pi_i, \Gamma \fCenter \Delta, \Lambda_i 
  \quad \cdots$}
\RightLabel{$\Elim\opa'$}
\BinaryInf$\Gamma \fCenter \Delta, A_i$
\DisplayProof
\] turns into \[
\Axiom$\Gamma \fCenter \Delta, \opa(A_1, \dots, A_n)$
\doubleLine
\UnaryInf$\Gamma \fCenter \Delta, A_i, \opa(A_1, \dots, A_n)$
\AxiomC{\dots}
\Axiom$\Pi_i, \Gamma \fCenter \Delta, \Lambda_i$ 
\doubleLine
\UnaryInf$\Pi_i, \Gamma \fCenter \Delta, A_i, \Lambda_i \dots$
\AxiomC{\dots}
\Axiom$A_i \fCenter A_i$
\doubleLine
\UnaryInf$A_i, \Gamma \fCenter \Delta, A_i$
\RightLabel{$\Elim\opa$}
\QuinaryInf$\Gamma \fCenter \Delta, A_i$
\DisplayProof
\]
Thus, any inference using the specialized rule $\Elim\opa'$ can be
replaced by the original rule~$\Elim\opa$.  Conversely, if the premises
of the original rule are derivable, the specialized rule applied to
the premises still present in the specialized rule proves $\Gamma
\fCenter \Delta, A_i$.  The removed premise of the original rule is
$A_i, \Gamma \fCenter \Delta$. Using $\cut$ and contraction, we obtain a
derivation of $\Gamma \fCenter \Delta$. E.g.,
\[
\Axiom$\Gamma \fCenter \Delta, \opa(A_1, \dots, A_n)$
\AxiomC{$\cdots \quad 
  \Pi_i, \Gamma \fCenter \Delta, \Lambda_i 
  \quad \cdots \quad 
  A_i, \Gamma \fCenter \Delta$}
\RightLabel{$\Elim\opa$}
\BinaryInf$\Gamma \fCenter \Delta$
\DisplayProof
\] 
turns into 
\[
\Axiom$\Gamma \fCenter \Delta, \opa(A_1, \dots, A_n)$
\AxiomC{$\cdots \quad 
      \Pi_i, \Gamma \fCenter \Delta, \Lambda_i
      \quad\cdots$}
\RightLabel{$\Elim\opa'$}
\BinaryInf$\Gamma \fCenter \Delta, A_i$
\Axiom$A_i, \Gamma \fCenter \Delta$
\RightLabel{$\cut$}
\BinaryInf$\Gamma, \Gamma \fCenter \Delta, \Delta$
\doubleLine
\UnaryInf$\Gamma \fCenter \Delta$
\DisplayProof
\]

Since we allow multiple formulas in the succedent, this generalizes to
multiple premises that only discharge single assumptions. For instance, we can
simplify~$\Elim\lor$
\[
\Axiom$\Gamma \fCenter \Delta, A \lor B$
\Axiom$A, \Gamma \fCenter \Delta$
\Axiom$B, \Gamma \fCenter \Delta$
\RightLabel{$\Elim\lor$}
\TrinaryInf$\Gamma \fCenter \Delta$
\DisplayProof
\]
to get
\[
\Axiom$\Gamma \fCenter \Delta, A \lor B$
\RightLabel{$\Elim\lor'$}
\UnaryInf$\Gamma \fCenter \Delta, A, B$
\DisplayProof
\]
Note that the simulation of the original general rule by the
specialized elimination rule requires a cut. It should thus not
surprise that general elimination rules are proof-theoretically better
behaved than the specialized rules.

It is also possible using the same idea to specialize rules by removing
premises which only contain a single formula in the succedent. In that
case, the corresponding formula must be added to the \emph{antecedent}
of the conclusion.  Here is an example. The general $\Elim\mid$ rule 
\[
\Axiom$\Gamma_0 \fCenter \Delta_0, A \mid B$
\Axiom$\Gamma_1 \fCenter \Delta_1, A$
\Axiom$\Gamma_2 \fCenter \Delta_2, B$
\RightLabel{$\Elim\mid$}
\TrinaryInf$\Gamma_0, \Gamma_1, \Gamma_2 \fCenter \Delta_0, \Delta_1, \Delta_2$
\DisplayProof
\]
specializes to the two rules
\[
\Axiom$\Gamma_0 \fCenter \Delta_0, A \mid B$
\Axiom$\Gamma_2 \fCenter \Delta_2, B$ 
\RightLabel{$\Elim\mid'_1$}
\BinaryInf$A, \Gamma_0, \Gamma_2 \fCenter \Delta_0, \Delta_2$
\DisplayProof
\qquad
\Axiom$\Gamma_0 \fCenter \Delta_0, A \mid B$
\Axiom$\Gamma_1 \fCenter \Delta_1, A$
\RightLabel{$\Elim\mid'_2$}
\BinaryInf$B, \Gamma_0, \Gamma_1 \fCenter \Delta_0, \Delta_1$
\DisplayProof
\]
and further to the single rule
\[
\Axiom$\Gamma_0 \fCenter \Delta_0, A \mid B$
\RightLabel{$\Elim\mid'$}
\UnaryInf$A, B, \Gamma_0 \fCenter \Delta_0$
\DisplayProof
\]

Sequent-style natural deduction is closely connected to standard
natural deduction, in which assumptions are not collected in the
antecedent of a sequent but are simply formulas at the top of a proof
tree, possibly marked as discharged. In the standard formalism, such a
specialized rule is difficult to accommodate, since it would amount to
a rule that allows one to \emph{add} undischarged assumptions to the
proof tree which don't already occur in the tree.

The restriction to a single formula in the antecedent of the
specialized premise is essential.  We \emph{cannot} specialize a rule
by removing a premise which discharges two assumptions by putting both
assumptions into the succedent of the conclusion.  The reason is that
from the conclusion $\Gamma \fCenter \Delta, A_i, A_j$ we cannot
recover $\Gamma \fCenter \Delta$ by cuts together with the removed
premise $A_i, A_j, \Gamma \fCenter \Delta$. We can, however, replace
the rule by a set of specialized rules, one for each assumption
formula discharged in the specialized premise.  This corresponds
exactly to first splitting the $\Left\opa$ rule, considering the
generalized $\Elim\opa$ rules based on these split rules, and then
specializing the resulting rules as before. Consider for example the
rules for $\land$. The standard $\Left\land$ rule has the single
premise $A, B, \Gamma \fCenter \Delta$.  It corresponds to the general
elimination rule
\[
\Axiom$\Gamma \fCenter \Delta, A \land B$ 
\Axiom$A, B, \Gamma \fCenter \Delta$
\RightLabel{$\Elim\land$}
\BinaryInf$\Gamma \fCenter \Delta$
\DisplayProof
\]
The rule $\Left\land$ can be split into two $\Left\land$ rules with premises
$A, \Gamma \fCenter \Delta$ and $B, \Gamma \fCenter \Delta$, respectively.
The corresponding general $\Elim\land$ rules are
\[
\Axiom$\Gamma \fCenter \Delta, A \land B$
\Axiom$A, \Gamma \fCenter \Delta$ 
\RightLabel{$\Elim\land_1$}
\BinaryInf$\Gamma \fCenter \Delta$
\DisplayProof
\quad\text{and}\quad
\Axiom$\Gamma \fCenter \Delta, A \land B$
\Axiom$B, \Gamma \fCenter \Delta$
\RightLabel{$\Elim\land_2$}
\BinaryInf$\Gamma \fCenter \Delta$
\DisplayProof\]
which can be specialized to the familiar two $\land$ elimination
rules:
\[
\Axiom$\Gamma \fCenter \Delta, A \land B$ 
\RightLabel{$\Elim\land_1'$}
\UnaryInf$\Gamma \fCenter \Delta, A$
\DisplayProof
\qquad\text{and}\qquad
\Axiom$\Gamma \fCenter \Delta, A \land B$
\RightLabel{$\Elim\land_2'$}
\UnaryInf$\Gamma \fCenter \Delta, B$
\DisplayProof
\]

\begin{table}
  \caption{Some specialized and split elimination rules for some unusual connectives}
\[
\begin{array}{cc}
\hline\hline
\text{connective} & \text{intro/elim rule} \\ \hline\\ 
A \nif B &  
\Axiom$\Gamma_0 \fCenter \Delta_0, A \nif B$
\Axiom$\Gamma_1 \fCenter \Delta_1, B$
\RightLabel{$\Elim\nif_1$}
\BinaryInf$\Gamma_0, \Gamma_1 \fCenter \Delta_0, \Delta_1$
\DisplayProof \\[2ex] 
& 
\Axiom$\Gamma_0 \fCenter \Delta_0, A \nif B$
\RightLabel{$\Elim\nif_2$}
\UnaryInf$\Gamma_0 \fCenter \Delta_0, A$
\DisplayProof \\[2ex] 
A \xor B &
\Axiom$\Gamma_0 \fCenter \Delta_0, A \xor B$
\RightLabel{$\Elim\xor_1$}
\UnaryInf$\Gamma_0 \fCenter \Delta_0, A, B$
\DisplayProof \\[2ex]
& 
\Axiom$\Gamma_0 \fCenter \Delta_0, A \xor B$
\Axiom$\Gamma_1 \fCenter \Delta_1, B$
\Axiom$\Gamma_2 \fCenter \Delta_2, A$
\RightLabel{$\Elim\xor_2$}
\TrinaryInf$\Gamma_0, \Gamma_1, \Gamma_2 \fCenter \Delta_0, \Delta_1, \Delta_2$
\DisplayProof \\[2ex]
A \lif B / C & 
\Axiom$\Gamma_0 \fCenter \Delta_0, A \lif B/C$
\Axiom$C, \Gamma_2 \fCenter \Delta_2, A$
\RightLabel{$\Elim\ite_1$}
\BinaryInf$\Gamma_0, \Gamma_2 \fCenter \Delta_0, \Delta_2, B$
\DisplayProof\\[2ex]
& 
\Axiom$\Gamma_0 \fCenter \Delta_0, A \lif B/C$
\Axiom$A, \Gamma_1 \fCenter \Delta_1$
\RightLabel{$\Elim\ite_2$}
\BinaryInf$\Gamma_0, \Gamma_1 \fCenter \Delta_0, \Delta_1, C$
\DisplayProof\\[2ex]
  \hline\hline
\end{array}
\]
\end{table}

It is possible to generate a general elimination rule from a
specialized one by reverse-engineering the specialization process.
Suppose we have a specialized rule $\Elim\opa'$:
\[
\Axiom$\Gamma \fCenter \Delta, \opa(\vec A)$
\AxiomC{$\cdots \quad
  \Pi_i, \Gamma \fCenter \Delta, \Lambda_i  
  \quad\cdots$}
\RightLabel{$\Elim\opa'$}
\BinaryInf$\Gamma \fCenter \Delta, A_i$
\DisplayProof
\]
Then the corresponding general elimination rule has the same premises
as $\Elim\opa'$ plus an additional premise $A_i, \Gamma \fCenter \Delta$,
and the conclusion leaves out the~$A_i$:
\[
\Axiom$\Gamma \fCenter \Delta, \opa(\vec A)$
\AxiomC{$\cdots\quad
  \Pi_i, \Gamma \fCenter \Delta, \Lambda_i 
  \quad\cdots$} 
\Axiom$A_i, \Gamma \fCenter \Delta$
\RightLabel{$\Elim\opa$}
\TrinaryInf$\Gamma \fCenter \Delta$
\DisplayProof
\]
Since $\Elim\opa'$ is the result of $\Elim\opa$ specialized in the premise
$A_i, \Gamma \fCenter \Delta$, the two rules are equivalent.

\section{Cut elimination and substitution}
\label{sec:cutel}

Gentzen's cut elimination method proceeds by permuting inference rules
with the cut rule until there is a topmost cut where the cut formula
is introduced in both premises of the cut by a right and a left
inference, respectively.  Such topmost cuts can then be reduced to
cuts with cut formulas of a lower degree.  We'll now show that the
fact that this is possible is no accident; in fact it holds whenever
we have a calculus obtained by our procedure.  The key fact here is
that the original clause sets $\C(\opa)^+$ and $\C(\opa)^-$ are not
jointly satisfiable.  As an unsatisfiable clause set it has a
\emph{resolution refutation}, which can be immediately translated into
a derivation of the empty sequent from the sequents corresponding to
the clauses in $\C(\opa)^+ \cup \C(\opa)^-$. 

The same is true for clause sets corresponding to split rules, since
each clause in them is a subset of a clause in $\C(\opa)^+ \cup
\C(\opa)^-$.  

\begin{thm}
The cut elimination theorem holds for $\L X$.
\end{thm}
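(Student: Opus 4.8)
The plan is to run Gentzen's Hauptsatz strategy, isolating the one step that is special to the present setting---the reduction of a cut whose cut formula is principal on both sides---and discharging it uniformly by a resolution refutation. First I would pass to the $\mix$ rule, which cuts simultaneously on all occurrences of a formula $F$ in the succedent of the left premise and the antecedent of the right premise. Since $\cut$ is the special case of $\mix$ on a single occurrence, and $\mix$ is simulated by $\cut$ together with contractions and weakenings, $\L X$ with $\cut$ and $\L X$ with $\mix$ prove the same sequents; hence it suffices to eliminate $\mix$. It suffices in turn to show that a proof whose only $\mix$ is its final inference can be transformed into a $\mix$-free proof of the same end-sequent, since an arbitrary proof is handled by removing $\mix$es one at a time, always choosing a topmost one.

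The transformation proceeds by the usual double induction on the degree $d$ of the $\mix$ formula (its number of connective occurrences) and the rank $r$ (the sum of the left and right ranks, so $r \geq 2$). When $r > 2$ the $\mix$ formula fails to be principal in at least one premise, and I would permute the $\mix$ upward past the offending inference---a structural rule, or a logical inference acting on a side formula---exactly as in the classical case; each such permutation lowers the rank while preserving the degree, or else removes the $\mix$ outright, so the induction hypothesis applies. The passage to $\mix$ is precisely what makes the permutation past $\Left\Contr$ and $\Right\Contr$ go through. When $r = 2$ and one premise is an initial sequent $\opa(\vec A) \fCenter \opa(\vec A)$, or the $\mix$ formula was introduced by weakening, the $\mix$ is deleted directly.

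The crux is the remaining $r = 2$ case, where $\opa(\vec A)$ is introduced by $\Right\opa$ on the left and by $\Left\opa$ on the right. Here I invoke the semantic fact noted just before the statement: $\C(\opa)^+ \cup \C(\opa)^-$ is unsatisfiable, so it has a resolution refutation $R$. The premises of the $\Right\opa$ inference are exactly the Kowalski sequents of the clauses of $\C(\opa)^+$ with the context $\Gamma \fCenter \Delta$ adjoined, and likewise those of $\Left\opa$ for $\C(\opa)^-$ with their context; each resolution step of $R$ resolves two clauses on some atom $A_i$, which translates into a $\mix$ on $A_i$ between the corresponding context-laden sequents. Tracing $R$ from its input clauses down to the empty clause thus yields a derivation of the end-sequent---modulo duplicated copies of the contexts, which are contracted away---using only $\mix$es on the $A_i$. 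As each $A_i$ is an immediate subformula of $\opa(\vec A)$, these have degree strictly below $d$, and the induction hypothesis on degree finishes the reduction.

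I expect the bookkeeping in this last case to be the main obstacle. One must check that resolution steps correspond precisely to $\mix$es on subformulas; that the side-context copies accumulated along the branching structure of $R$ can be contracted down to the single antecedent/succedent context of the original $\mix$ conclusion; and that the translation still goes through when $R$ leaves some premise clauses unused, which simply makes the corresponding minor premises vacuous (removable by weakening). The same argument covers the split rules of $\L X^*$, since every premise clause of a split rule is a subset of a clause in $\C(\opa)^+ \cup \C(\opa)^-$, which only strengthens the clause set and so preserves the existence of the refutation, and it transfers to $\Lc X$ via the equivalence already established.
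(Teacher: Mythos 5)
Your proposal is correct and follows essentially the same route as the paper's proof: reduce $\cut$ to $\mix$, perform the double induction on degree and rank, permute $\mix$ upward in the non-principal cases, and in the critical case translate a resolution refutation of the unsatisfiable set $\C(\opa)^+ \cup \C(\opa)^-$ into a cascade of $\mix$es on the immediate subformulas $A_i$, which have lower degree. The bookkeeping points you flag (context duplication handled by contraction, unused premise clauses, the extension to split rules via subset clauses) are exactly the ones the paper addresses.
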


\begin{proof}
For a detailed proof, see Theorem 4.1 of \cite{BaazFermullerZach1994}.
We give a sketch only. As in \cite{Gentzen1934}, we observe that the
\cut{} rule is equivalent to the \mix{} rule,
\[
\Axiom$\Gamma \fCenter \Delta$
\Axiom$\Theta \fCenter \Xi$
\RightLabel{$\mix: A$}
\BinaryInf$\Gamma, \Lambda^* \fCenter \Theta^*, \Xi$
\DisplayProof
\]
where $\Delta^*$ and $\Theta^*$ are $\Delta$ and $\Xi$, respectively,
with every occurrence of the mix formula~$A$ removed. We show that a
proof with a single \mix{} inference as its last inference can be
transformed into one without \mix. The result then follows by
induction on the number of applications of \mix{} in the proof.

We introduce two measures on proofs ending in a single \mix: The
degree is the degree of the \mix{} formula~$A$. The left rank is the
maximum number of consecutive sequents on a branch ending in the left
premise that contains the \mix{} formula~$A$ on the right, counting
from the left premise; and similarly for the right rank. The rank of
the \mix{} is the sum of the left and right rank. The proof then
proceeds by double induction on the rank and degree. We distinguish
cases according to the inferences ending in the premises of the \mix{}
inference and show how, in every case, either that the degree of the
\mix{} can be reduced, or else that there is a proof using a \mix{} of
the same degree but of lower rank.

The first case occurs when the \mix{} formula~$A$ appears only once in
$\Delta$ and~$\Theta$ and as the principal formula of a~$\Right\opa$
inference that ends in the left premise $\Gamma \fCenter \Delta$ and of
a~$\Left\opa$ inference that ends in the right premise $\Theta \fCenter
\Xi$. In all other cases, the \mix{} inference is switched with the
last inference on the left or right side, thus reducing the rank.

In the critical first case, the proof ends in
\[
\Axiom$\cdots\quad \Pi_i, \Gamma \fCenter \Delta, \Lambda_i \quad\cdots$
\RightLabel{$\Right\opa$}
\UnaryInf$\Gamma \fCenter \Delta, \opa(\vec A)$
\Axiom$\cdots\quad \Pi_i', \Theta \fCenter \Xi, \Lambda_i' \quad\cdots$
\RightLabel{$\Left\opa$}
\UnaryInf$\opa(\vec A), \Theta \fCenter \Xi$
\RightLabel{$\mix:\opa(\vec A)$}
\BinaryInf$\Gamma, \Theta \fCenter \Delta, \Xi$
\DisplayProof
\]
Remove the side formulas $\Gamma$, $\Delta$, $\Theta$, $\Xi$ from the
premises and consider the set of sequents $\Pi_i \fCenter \Lambda_i$
and $\Pi_i' \fCenter \Lambda_i'$. These are just the clauses
$\C(\opa)^+$ and $\C(\opa)^-$, which together form an unsatisfiable
set of Kowalski clauses. A resolution refutation of this clause set
yields a derivation of the empty sequent from these sequents using
only \mix{} inferences on the~$A_i$. This derivation in turn, by
adding the side formulas $\Gamma$, $\Delta$, $\Theta$, $\Xi$
appropriately, and adding some exchanges and contractions at the end,
yields a derivation of the sequent $\Gamma, \Theta \fCenter \Delta,
\Xi$. The remaining \mix{} inferences are all of lower degree, and so
the induction hypothesis applies.

In the other cases we show that the end-sequent has a cut-free derivation
by appealing to the second clause of the induction hypothesis, namely,
that proofs ending in \mix{} inferences of lower rank can be
transformed into \mix-free proofs. For instance, suppose the right
premise ends in $\Left\opa$ but its principal formula~$\opa(\vec A)$ is
not the only occurrence of the \mix{} formula in the antecedent of the
right premise:
\[
\Axiom$\Gamma \fCenter \Delta$
\Axiom$\cdots\quad \Pi_i', \Theta \fCenter \Xi, \Lambda_i' \quad\cdots$
\RightLabel{$\Left\opa$}
\UnaryInf$\opa(\vec A), \Theta \fCenter \Xi$
\RightLabel{$\mix: \opa(\vec A)$}
\BinaryInf$\Gamma, \Theta^* \fCenter \Delta^*, \Xi$
\DisplayProof
\]
We may assume that the \mix{} formula $\opa(\vec A)$ does not occur
in~$\Gamma$, since otherwise we can obtain a \mix-free proof of the
end-sequent from the right premise of the \mix{} using only weakening
and exchanges.

If we now apply \mix{} to the left premise and to each of the premises
of the $\Left\opa$ inference directly we have:
\[
\Axiom$\Gamma \fCenter \Delta$
\Axiom$\Pi_i', \Theta \fCenter \Xi, \Lambda_i'$
\RightLabel{$\mix: \opa(\vec A)$}
\BinaryInf$\Gamma, \Pi_i', \Theta^* \fCenter
    \Delta^*, \Xi, \Lambda_i'$
\DisplayProof
\]
Since the subproof leading to the premise on the right no longer
contains the $\Left\opa$ inference, the right rank is reduced by~$1$, and
so the induction hypothesis applies. Each of these derivations can be
transformed into a \mix-free derivation. We can apply $\Left\opa$ and
\mix{} again:
\[
\Axiom$\Gamma \fCenter \Delta$
\AxiomC{$\cdots$}
\Axiom$\Gamma, \Pi_i', \Theta^* \fCenter \Delta^*, \Xi, \Lambda_i'$
\doubleLine
\RightLabel{$\Left\Exch$}
\UnaryInf$\Pi_i', \Gamma, \Theta^* \fCenter \Delta^*, \Xi, \Lambda_i'$
\AxiomC{$\cdots$}
\RightLabel{$\Left\opa$}
\TrinaryInf$\opa(\vec A), \Gamma, \Theta^* \fCenter \Delta^*, \Xi$
\RightLabel{$\mix: \opa(\vec A)$}
\BinaryInf$\Gamma, \Gamma, \Theta^* \fCenter \Delta^*, \Delta^*, \Xi$
\DisplayProof
\]
The right rank of this \mix{} is~$1$ since $\opa(\vec A)$ does not
occur in $\Pi_i', \Gamma, \Theta^*$, so the rank is lower than that of
the original \mix{} and the induction hypothesis again applies:
$\Gamma, \Gamma, \Theta^* \fCenter \Delta^*, \Delta^*, \Xi$ has a
proof without \mix. We can obtain the original end-sequent $\Gamma,
\Theta^* \fCenter \Delta^*, \Xi$ from $\Gamma, \Gamma, \Theta^*
\fCenter \Delta^*, \Delta^*, \Xi$ by exchanges and contractions.
\end{proof}

To illustrate the use of resolution refutations in the first case,
consider a proof that ends in
\[
\Axiom$\Gamma \fCenter \Delta, A$
\Axiom$\Gamma \fCenter \Delta, B$
\RightLabel{$\Right\land$}
\BinaryInf$\Gamma \fCenter \Delta, A \land B$
\Axiom$A, B, \Theta \fCenter \Xi$
\RightLabel{$\Left\land$}
\UnaryInf$A \land B, \Theta \fCenter \Xi$
\RightLabel{$\mix: A \land B$}
\BinaryInf$\Gamma, \Theta \fCenter \Delta, \Xi$
\DisplayProof
\]
The corresponding set of Kowalski clauses is
\[
\fCenter A, \qquad \fCenter B, \qquad A, B \fCenter
\]
(i.e., $\{A\}$, $\{B\}$, $\{\lnot A, \lnot B\}$). A resolution
refutation is
\[
\Axiom$\fCenter B$
\Axiom$\fCenter A$
\Axiom$A, B \fCenter$
\BinaryInf$B \fCenter$
\BinaryInf$\fCenter$
\DisplayProof
\]
to which we add the side formulas  appropriately, and exchanges and
contraction inferences at the end, to obtain:
\[
\Axiom$\Gamma \fCenter \Delta, B$
\Axiom$\Gamma \fCenter \Delta, A$
\Axiom$A, B, \Theta \fCenter \Xi$
\RightLabel{$\mix$}
\BinaryInf$\Gamma, B, \Theta \fCenter \Delta, \Xi$
\RightLabel{$\mix$}
\BinaryInf$\Gamma, \Gamma, \Theta \fCenter \Delta, \Delta, \Xi$
\doubleLine
\UnaryInf$\Gamma, \Theta \fCenter \Delta, \Xi$
\DisplayProof
\]

In the natural deduction system $\Nms X$, the cut rule can also be
eliminated.  In this case, however, the proof is much simpler than 
for $\L X$. In fact, in $\Nms X$ the cut rule simply
corresponds to substituting derivations for assumptions.  Since $\Nms 
X$ has no rules which introduce a formula in the antecedent of a
sequent in a derivation, every formula in the antecedent is either
weakened, or it stems from an initial sequent.  Whenever it is
weakened, we can derive the conclusion of the cut inference from the
premise of the weakening inference already.  If it stems from an
initial sequent, we can replace the initial sequent by the derivation
ending in the left premise, and adding the context formulas $\Gamma$
and $\Delta$ to the antecedent and succedent of every inference below
it, adjusting any inferences with weakenings as necessary.

Suppose $\delta$ is a derivation of $\Gamma \fCenter \Delta, A$ and
$\delta'$ is a derivation of $A, \Pi \fCenter \Lambda$, and suppose that
neither $\delta$ nor $\delta'$ contains a cut. We can define the
\emph{substitution}~$\delta'[\delta / A]$ with end sequent $\Pi, \Gamma
\fCenter \Delta, \Lambda$ recursively as follows.  We distinguish cases
according to the last inference of $\delta'$:

\begin{enumerate}
\item $\delta'$ is an initial sequent $A \fCenter A$. Then $\Pi =
  \emptyset$, $\Lambda = A$, and $\delta'[\delta/A]$ is~$\delta$.
\item $\delta'$ is an initial sequent $B \fCenter B$, but $B \neq
  A$. Then $\delta'[\delta/A]$ is
\[
\Axiom$B \fCenter B$
\doubleLine
\UnaryInf$B, \Gamma \fCenter \Delta, B$
\DisplayProof
\]
\item $\delta'$ ends in a $\Left\Weak$. The weakening formula is $A$, and the
  deduction ending in the premise is $\delta'' \colon \Pi \fCenter
  \Lambda$. Let $\delta'[\delta/A]$ be
\[
\AxiomC{}
\RightLabel{$\delta''$}
\Deduce$\Pi \fCenter \Lambda$
\doubleLine
\UnaryInf$\Pi, \Gamma \fCenter \Delta, \Lambda$
\DisplayProof
\]
\item $\delta'$ ends in any other rule: Then $\delta'[\delta/A]$ is
  obtained by substituting $\delta$ for $A$ in each of the premises,
  and then applying the same rule.  It is straightforward to verify
  that the result is a correct derivation of $\Pi, \Gamma \fCenter
  \Delta, \Lambda$.  Note in particular that in none of the remaining
  rules of $\Nms X$ can $A$ be a principal formula in the
  antecedent.
\end{enumerate}

\begin{prop}
The cut rule can be eliminated from derivations in $\Nms X$.
\end{prop}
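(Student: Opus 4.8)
The plan is to make the substitution operation $\delta'[\delta/A]$ just defined carry the entire argument, applying it to one cut at a time from the top of a derivation downward, so that nothing like Gentzen's rank induction is required. The load-bearing claim is this: \emph{if $\delta$ and $\delta'$ are cut-free derivations of $\Gamma \fCenter \Delta, A$ and $A, \Pi \fCenter \Lambda$, then $\delta'[\delta/A]$ is a correct and cut-free derivation of $\Pi, \Gamma \fCenter \Delta, \Lambda$.} I would prove this by induction on the height of $\delta'$, following the four clauses of the definition. Clauses (1)--(3) are immediate and visibly introduce no $\cut$: the output is $\delta$ itself, an initial sequent weakened by $\Gamma$ and $\Delta$, or the premise derivation $\delta''$ weakened by $\Gamma$ and $\Delta$. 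In clause (4) the induction hypothesis supplies cut-free derivations of the premises with $\delta$ substituted in, and re-applying the last rule of $\delta'$ yields the desired end sequent; since that rule is never a $\cut$, the result remains cut-free. The recursion descends only into proper subderivations of $\delta'$ (copying $\delta$ without recursion), so it terminates.

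Granting the claim, I would conclude by induction on the number of $\cut$ inferences in a derivation $\pi$. If $\pi$ is cut-free there is nothing to do. Otherwise choose an \emph{uppermost} $\cut$, one with no further $\cut$ anywhere above it; its immediate subderivations $\delta$ of $\Gamma \fCenter \Delta, A$ and $\delta'$ of $A, \Pi \fCenter \Lambda$ are then cut-free, so the claim applies. Replacing this $\cut$ together with those subderivations by $\delta'[\delta/A]$ yields a cut-free derivation of the very same end sequent $\Pi, \Gamma \fCenter \Delta, \Lambda$; the rest of $\pi$ is untouched and no new cuts are created, so the total number of $\cut$ inferences drops by exactly one and the induction hypothesis finishes the proof.

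The substantive work lives entirely in clause (4), and what makes it go through is that $\Nms X$ has no left \emph{logical} rules: the cut formula $A$ occurs in the antecedent purely as context, never as the principal formula of an $\Intro\opa$ or $\Elim\opa$ inference, so such an inference survives substitution unchanged, merely picking up $\Gamma$ and $\Delta$ as additional side formulas in each premise. The delicate points I would spell out are the structural-rule cases. One must track \emph{which} left occurrence of $A$ is the cut formula, so as not to substitute into a distinct assumption of the same shape (for instance one discharged by an elimination inference); and when the cut-formula occurrence is the principal formula of a $\Left\Contr$, both of its premise ancestors must be replaced by $\delta$, so that ``applying the same rule'' really amounts to contracting the two resulting copies of each formula of $\Gamma$ and of $\Delta$, rather than contracting $A$. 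Getting this bookkeeping right, together with the matching adjustments for $\Left\Exch$ and $\Right\Exch$, is the only genuine obstacle; everything else is routine.
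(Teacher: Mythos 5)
Your proposal is correct and follows essentially the same route as the paper: induction on the number of cuts, selecting an uppermost cut and replacing it by the substitution $\delta''[\delta'/A]$, whose correctness and cut-freeness the paper asserts as ``straightforward to verify'' and you spell out by induction on the height of $\delta'$. The bookkeeping points you flag (tracking the particular antecedent occurrence of $A$, and the $\Left\Contr$ case) are exactly the details the paper leaves implicit.
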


\begin{proof}
By induction on the number of cut inferences in a
derivation~$\delta$. If there are no cuts in $\delta$, there is
nothing to prove. Suppose $\delta$ contains a cut inference; pick an
uppermost one.  Replace the subderivation
\begin{endproofeqnarray*}
\AxiomC{}
\RightLabel{$\delta'$}
\Deduce$\Gamma \fCenter \Delta, A$
\AxiomC{}
\RightLabel{$\delta''$}
\Deduce$A, \Pi \fCenter \Lambda$
\RightLabel{$\cut$}
\BinaryInf$\Gamma, \Pi \fCenter \Delta, \Lambda$
\DisplayProof
&\qquad\text{by}\qquad&
\AxiomC{}
\RightLabel{$\delta''[\delta'/A]$}
\Deduce$\Pi, \Gamma \fCenter \Delta, \Lambda$
\doubleLine
\UnaryInf$\Gamma, \Pi \fCenter \Delta, \Lambda$
\DisplayProof
\end{endproofeqnarray*}
\end{proof}

Substitutions can similarly be defined for $\Nmsl X$, the variant of
$\Nms X$ without left weakening and contraction in which antecedent
formulas are labelled. Suppose $\delta$ and $\delta'$ are $\Nmsl X$
derivations without labels in common, without $\cut$, and $\delta$ ends
in $\Gamma \fCenter \Delta, A$ and $\delta'$ ends in $x \colon A, \Pi
\fCenter \Lambda$.  We define a substituted translation
$\delta'[\delta/x \colon A]$ of $\Gamma', \Pi \fCenter \Delta, \Lambda$
with $\Gamma' \subseteq \Gamma$.

\begin{enumerate}
\item $\delta'$ is an initial sequent $x \colon A \fCenter A$. Then
  $\Lambda = \{A\}$. Let $\delta'[\delta/x \colon A]$ be $\delta$ plus
  $\Right\Weak$ to obtain a derivation of $\Gamma \fCenter \Delta, A$.
\item $\delta'$ is an initial sequent $y \colon B \fCenter B$, but $B
  \neq A$ or $x \neq y$. Then $\delta'[\delta/A]$ is $\delta'$ plus
  $\Right\Weak$ to obtain a derivation of $\Pi \fCenter \Delta, B$.
\item $\delta'$ ends in any other rule: Then $\delta'[\delta/x \colon
  A]$ is obtained by substituting $\delta$ for $x \colon A$ in each of
  the premises, and then applying the same rule. If the rule
  discharged an assumption $x \colon A$ no longer present in the
  antecedent of the respective substituted premise, in the resulting
  inference the corresponding discharge is vacuous and the label $y$
  removed from the list of discharged labels.
\end{enumerate}

It is straightforward to verify that the result is a correct
derivation of $\Gamma', \Pi \fCenter \Delta, \Lambda$.

\begin{prop}
The cut rule can be eliminated from derivations in $\Nmsl X$.
\end{prop}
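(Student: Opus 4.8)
The plan is to mirror the cut-elimination argument just given for $\Nms X$, replacing the substitution used there by the labelled substitution $\delta'[\delta/x \colon A]$ defined immediately above. I would argue by induction on the number of $\cut$ inferences in a derivation. If there are none there is nothing to do. Otherwise I pick an uppermost $\cut$; its two immediate subderivations, say $\delta$ ending in $\Gamma \fCenter \Delta, A$ and $\delta'$ ending in $x \colon A, \Pi \fCenter \Lambda$, are then both cut-free. By the label-renaming lemma I may assume $\delta$ and $\delta'$ have no labels in common, so that $\delta'[\delta/x \colon A]$ is defined; by its construction it is cut-free and derives $\Gamma', \Pi \fCenter \Delta, \Lambda$ with $\Gamma' \subseteq \Gamma$. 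The goal is then to graft this cut-free subderivation back in place of the chosen $\cut$ and appeal to the induction hypothesis, the number of cuts having dropped by one.

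The labelled case differs from the $\Nms X$ case in exactly one essential respect, and this is where I expect the only real difficulty. In $\Nms X$ the substitution reproduces the conclusion $\Gamma, \Pi \fCenter \Delta, \Lambda$ of the $\cut$ (up to exchanges), so the replacement slots in and leaves the rest of the derivation untouched. Here, because $\Nmsl X$ has no $\Left\Weak$ with which to restore the missing labelled assumptions, the substitution may instead produce a sequent with a \emph{strictly smaller} antecedent $\Gamma', \Pi \subseteq \Gamma, \Pi$. I therefore cannot simply graft and walk away; I must show that shrinking the antecedent of a subderivation's conclusion does not disturb the inferences below it.

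The fact that makes this work is a monotonicity property of $\Nmsl X$: every rule remains applicable when labelled formulas are deleted from the antecedents of its premises, and such deletions only propagate (weakly) to the conclusion. Indeed, the side and context formulas of every introduction and elimination rule are schematic and arbitrary, discharge is allowed to be vacuous, and the surviving structural rules ($\Right\Weak$, $\Right\Contr$, $\Right\Exch$) act only on the right. So I would establish, by a routine induction on height, the auxiliary statement that replacing a subderivation ending in $\Sigma \fCenter \Theta$ by a cut-free one ending in $\Sigma'' \fCenter \Theta$ with $\Sigma'' \subseteq \Sigma$ yields again a correct derivation, whose end antecedent shrinks to a subset while its succedent and the cuts below are unchanged. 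Applying this with $\Sigma'' = \Gamma', \Pi$ pushes the subset antecedent down to the root and gives a derivation with one fewer $\cut$, whence the induction hypothesis finishes the proof.

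The one place in this propagation that requires genuine care is the interaction of the shrunk antecedent with a $\cut$ lying below the one being eliminated. If a label removed from $\Gamma'$ happens to be the cut formula of a lower $\cut$ on its right-premise side, that $\cut$ does not become ill-formed but rather \emph{superfluous}: its left premise can be discarded, exactly the phenomenon already absorbed by clause~(2) and the vacuous-discharge clause in the definition of the substitution. Verifying that each rule, $\cut$ included, either tolerates or absorbs a missing antecedent formula in this way is where all the bookkeeping lives, but it is mechanical; once it is in place the result follows as for $\Nms X$, with the understanding (exactly as in the translation from $\Nms X$ to $\Nmsl X$) that the resulting cut-free derivation may prove $\Gamma' \fCenter \Delta$ for some $\Gamma' \subseteq \Gamma$.
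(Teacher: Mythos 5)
Your proposal follows the paper's proof exactly: the paper's argument is precisely the induction on the number of $\cut$ inferences carried over from the $\Nms X$ case, with the labelled substitution $\delta'[\delta/x \colon A]$ in place of the unlabelled one, and the only content of the paper's stated proof is the observation that renaming labels makes the two subderivations label-disjoint so that the substitution is applicable. Your additional discussion of how the shrunken antecedent $\Gamma' \subseteq \Gamma$ must be propagated through the inferences (including lower cuts, which may become superfluous) below the replaced subderivation is a real point that the paper's one-sentence proof leaves entirely implicit, and the monotonicity lemma you sketch is the right way to discharge it.
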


\begin{proof}
By renaming the labels, we can guarantee that the labels in the
derivation of the premises are disjoint.
\end{proof}

\section{Normalization for multi-conclusion natural deduction}
\label{sec:normalization}

The cut elimination theorem in $\Nms X$ and $\Nmsl X$ is, as we have
seen, simply the result that derivations of a conclusion involving a
formula~$A$ may be substituted for assumptions of that formula.  In
the context of natural deduction, the more interesting result is that
every derivation reduces, via a sequence of local reduction steps, to
a normal form.  A derivation is in normal form if it involves no
``detours,'' i.e., no introductions of a formula by $\Intro\opa$ followed by
eliminations of the same formula by $\Elim\opa$.

We prove the normalization result for $\Nmsl X$ without \cut. Since
conclusions are now multi-sets of formulas, the situation vis-\`a-vis
detours in derivations is more complicated than in the
single-conclusion case. Specifically, in a single-conclusion
derivation any sequence of consecutive sequents beginning with an
$\Intro\opa$ and ending in an $\Elim\opa$ inference is a ``detour,''
i.e., a maximum segment. No two such segments can overlap, and the
beginning and end of any segment is uniquely determined. In the
multi-conclusion case, segments can overlap, i.e., it is possible to
have sequences of sequents in which a formula $\opa(\vec A)$ is
introduced, then another $\opb(\vec B)$ is introduced, then $\opa(\vec
A)$ is eliminated, and finally $\opb(\vec B)$ is eliminated. The
presence of the $\Right\Contr$ rule adds further complications: a
single $\Elim\opa$ inference can count as the end-point of more than
one maximum segment.\footnote{The proof of \cite{Cellucci1992} does
not consider this complication.} In order to deal with these
complications, we must track the specific formula occurrences
introduced and later eliminated in the definition of maximal segments.

\begin{defn}
A \emph{maximal segment} in a cut-free derivation~$\delta$ is a sequence 
of sequents $S_1$, \dots, $S_k$ and of occurrences $B_1$, \dots, $B_k$
of $\opa(\vec A)$ in the succedents of $S_1$, \dots, $S_k$,
respectively, with the following properties.
\begin{enumerate}
\item $S_1$ is the conclusion of a $\Intro\opa$ or $\Right\Weak$ inference with principal
  formula $B_1 = \opa(\vec A)$.
\item $S_i$ is a premise of an inference, $S_{i+1}$
  its conclusion, and $B_{i+1}$ is the occurrence of $\opa(\vec A)$
  corresponding to $B_i$ in $S_i$. Specifically, $B_{i+1}$ is a side
  formula if $B_i$ is, or the principal formula of $\Right\Contr$ if $B_i$ is
  one of the formula occurrences being contracted.
\item $S_k$ is the major premise of a $\Elim\opa$ inference with
  principal formula occurrence $B_k = \opa(A_1, \dots, A_n)$.
\end{enumerate}
A derivation is \emph{normal} if it contains no maximal segments.
\end{defn}

The formula $\opa(\vec A)$ is the \emph{maximal formula of the
segment}. The \emph{degree of the segment} is the degree of $\opa(\vec
A)$, i.e., the number of logical operators in~$\opa(\vec A)$. The
\emph{length} of the segment is $k$.

Note that although the principal formula of a $\Elim\opa$ inference
may be the last formula occurrence of more than one maximal segment
(if it passes through a $\Right\Contr$ inference), any principal
formula occurrence of $\Intro\opa$ is the first formula of at most one
maximal segment. Since we are dealing with multisets of formulas,
which formula occurrence in the conclusion of a rule corresponds to
which formula occurrence in which premise is underdetermined. We could
make this precise by introducing labels or moving to a calculus of
sequences in which it is determined; for simplicity we assume that we
have picked a way to associate corresponding formula occurrences in
the derivation we start from and keep this association the same
throughout the transformation.

\begin{thm}\label{thm:normalization}
Any cut-free derivation in $\Nmsl X$ normalizes, i.e., there is a
sequence of local proof transformations which ends in a normal
derivation.
\end{thm}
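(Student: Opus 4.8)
The plan is to prove Theorem~\ref{thm:normalization} by a double induction that parallels the cut-elimination proof for~$\L X$, using the substitution operation for $\Nmsl X$ already established. To each cut-free derivation I would assign the measure $(d, L)$, ordered lexicographically, where $d$ is the maximal degree of any maximal segment occurring in the derivation and $L$ is the sum of the lengths of all maximal segments of that degree~$d$; a derivation is normal exactly when it has no maximal segment at all. I would then show that every non-normal derivation admits a local proof transformation strictly lowering $(d,L)$, so that after finitely many steps no maximal segment remains. Since the substitution operation preserves cut-freeness, every intermediate derivation stays cut-free, as required.

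For the reduction step I would first select a maximal segment $S_1, \dots, S_k$ of maximal degree~$d$ whose eliminating $\Elim\opa$ inference lies \emph{uppermost}, i.e., is not below the eliminating inference of any other degree-$d$ segment. This choice guarantees that the complete subderivations standing above that $\Elim\opa$ inference contain no maximal segment of degree~$d$, which is what will keep the later copying harmless. I would then use permutation conversions to shorten the chosen segment to length~$1$. Along the segment $\opa(\vec A)$ is, by clause~(2) of the definition, either a side formula of each intervening inference or a formula contracted by a $\Right\Contr$. When it is a side formula I permute the $\Elim\opa$ inference (with its minor premises) one step upward past that inference: the two inferences do not interact, so all ``descends-to'' relationships between occurrences are preserved, no new segment is created, and the segment shortens by one. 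When $\opa(\vec A)$ is contracted — the case in which one $\Elim\opa$ terminates several segments — I duplicate the $\Elim\opa$ inference and its minor premises so that each contracted occurrence is eliminated above the contraction, reinstating $\Right\Contr$ below; because the duplicated minor premises sit above the uppermost elimination, they carry only segments of degree $<d$, so $L$ still strictly decreases.

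Once the segment has length~$1$, the conclusion $S_1$ of the introducing inference is itself the major premise of $\Elim\opa$, and one of two final conversions applies. If $S_1$ is the conclusion of $\Right\Weak$, I apply a simplification conversion: since $\opa(\vec A)$ was merely weakened in, the end-sequent of the elimination is obtainable from the premise of the weakening together with the minor premises by weakenings and exchanges alone, so the elimination is discarded and the segment removed outright. If $S_1$ is the conclusion of $\Intro\opa$, I apply the proper conversion, which is the precise natural-deduction analogue of the critical first case of the cut-elimination proof: the premises of $\Intro\opa$ realize the clauses of $\C(\opa)^+$ and the minor premises of $\Elim\opa$ those of $\C(\opa)^-$, and since $\C(\opa)^+ \cup \C(\opa)^-$ is unsatisfiable it has a resolution refutation. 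I translate that refutation into an iterated application of the substitution operation $\delta'[\delta/x \colon A]$ on the auxiliary formulas~$A_i$, each resolution step on $A_i$ becoming one substitution, yielding a derivation of the end-sequent directly. Every $A_i$ has degree strictly less than $\opa(\vec A)$, so any segment created by these substitutions has degree $<d$; and since all copied material comes from above the uppermost elimination, no degree-$d$ segment is duplicated. Hence $L$ drops (and $d$ drops once the last degree-$d$ segment is gone), completing the induction.

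The hard part, as anticipated in the remarks preceding the definition, is the multi-conclusion bookkeeping around contraction and overlapping segments: I must verify that permutation past $\Right\Contr$ and the copying inherent in the substitution operation never create or duplicate a maximal segment of the top degree~$d$, only of strictly lower degree. This is exactly what forces both the \emph{uppermost} choice of the segment to be reduced and the fixed association of corresponding succedent occurrences flagged in the remark after the definition; granting this bookkeeping, each of the three conversions strictly decreases $(d,L)$, and normalization follows.
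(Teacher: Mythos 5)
Your overall strategy is the paper's: permute the eliminating inference upward to shorten a chosen maximal segment to length~$1$, handle $\Right\Weak$ by a simplification conversion, and handle the $\Intro\opa$/$\Elim\opa$ collision by translating a resolution refutation of $\C(\opa)^+\cup\C(\opa)^-$ into substitutions on the lower-degree $A_i$. The genuine gap is in your selection criterion. You pick a degree-$d$ segment whose \emph{eliminating} inference is not below the eliminating inference of any other degree-$d$ segment, and infer that the subderivations above it ``carry only segments of degree $<d$.'' That inference is false in the multi-conclusion setting: a degree-$d$ segment $T$ may \emph{begin} (with an $\Intro\opb$ or $\Right\Weak$) above a minor premise of your chosen $\Elim\opa$, or above a premise of the chosen segment's $\Intro\opa$, pass through that premise as a side formula, and be eliminated only \emph{below} your chosen $\Elim\opa$. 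Such a $T$ does not violate your uppermost condition, yet both the $\Right\Contr$-permutation step and the resolution-based proper reduction duplicate exactly those subderivations; duplicating the upper part of $T$ produces two degree-$d$ segments where there was one, so your measure $L$ (sum of lengths of degree-$d$ segments) can strictly increase while the chosen segment shortens by only one. This is why the paper's conditions (a) and (b) are phrased not in terms of the relative position of eliminating inferences but as: no premise of the $\Intro\opa$/$\Right\Weak$ inference beginning the segment, and no minor premise of the $\Elim\opa$ inference ending it, \emph{lies on or below} any segment of maximal degree. That stronger condition is what makes the duplications harmless, and your criterion does not imply it.

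Two smaller points. First, your reduction of the length-$1$ case should account for vacuous discharge in $\Nmsl X$: an auxiliary formula $x\colon A_i$ that the rule schema places in the antecedent of a minor premise may be absent, so the corresponding negative literal must be deleted and the resolution refutation pruned before it is translated into substitutions (these are the simplification conversions the paper builds in). Second, your measure $(d,L)$ is a legitimate alternative to the paper's $(m(\delta),i(\delta))$ and would let you fold the shortening and the removal into a single descending induction, but only once the selection criterion is repaired so that no step duplicates or creates a degree-$d$ segment.
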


\begin{proof}
Let $m(\delta)$ be the maximal degree of segments in~$\delta$, and let
$i(\delta)$ be the number of segments of maximal degree $m(\delta)$.  We
proceed by induction on $(m(\delta), i(\delta))$.  

We first ensure that at least one suitable segment of maximal degree
is of length~$1$. There must be a segment $S_1$, \dots, $S_k$ with
maximal formula $\opa(A_1, \dots, A_n)$ of degree~$m(\delta)$ with the
following properties. (a) No premise of the $\Intro\opa$ or
$\Right\Weak$ rule with $S_1$ as conclusion lies on or below a segment
of degree $m(\delta)$. (b) No minor premise of the $\Elim\opa$ rule of
which $S_k$ is the major premise lies on or below a segment of degree
$m(\delta)$. If the length $k=1$ we are done. Otherwise, we decrease
the length of this maximal segment.

We consider cases according to the rule of which $S_k$ is the
conclusion.  Since the segment is of length $>1$, the formula
occurrence $B_k = \opa(\vec A)$ is not the principal formula of an
$\Intro\opa$ or $\Right\Weak$ inference. In each remaining case, this last
inference and the following $\Elim\opa$ inference can be permuted and the
length of the segment considered reduced.
\begin{enumerate}
\item The rule is $\Right\Weak$, but $B_k$ is not the weakened formula. Then
replace the inference
\[
\Axiom$\Gamma_0 \fCenter \Delta_0, \opa(\vec A)$
\RightLabel{$\Right\Weak$}
\UnaryInf$\Gamma_0 \fCenter \Delta_0, A, \opa(\vec A)$
\AxiomC{\dots}
\RightLabel{$\Elim\opa$}
\BinaryInf$\dots\Gamma_i\dots \fCenter \dots\Delta_i \dots, A$
\DisplayProof
\qquad\text{by}\qquad
\Axiom$\Gamma_0 \fCenter \Delta_0, \opa(\vec A)$
\AxiomC{\dots}
\RightLabel{$\Elim\opa$}
\BinaryInf$\dots\Gamma_i\dots \fCenter \dots\Delta_i\dots$
\RightLabel{$\Right\Weak$}
\UnaryInf$\dots\Gamma_i\dots \fCenter \dots\Delta_i\dots, A$
\DisplayProof\]
The length of any segment which ends with the $\Elim\opa$ inference, and
any segment which begins with the $\Right\Weak$ inference have decreased, and
others are unchanged. No new maximal segments are added.
\item The rule is $\Right\Contr$ with the principal formula not $B_k = \opa(\vec A)$.
Then replace the inferences
\[
\Axiom$\Gamma_0 \fCenter \Delta_0, A, A, \opa(\vec A)$ 
\RightLabel{$\Right\Contr$}
\UnaryInf$\Gamma_0 \fCenter \Delta_0, A, \opa(\vec A)$
\AxiomC{\dots} 
\RightLabel{$\Elim\opa$}
\BinaryInf$\dots\Gamma_i\dots \fCenter \dots\Delta_i\dots, A$
\DisplayProof
\qquad\text{by}\qquad
\Axiom$\Gamma_0 \fCenter \Delta_0, A, A, \opa(\vec A)$
\AxiomC{\dots}
\RightLabel{$\Elim\opa$}
\BinaryInf$\dots\Gamma_i\dots \fCenter \dots\Delta_i\dots, A, A$
\RightLabel{$\Right\Contr$}
\UnaryInf$\dots\Gamma_i\dots \fCenter \dots\Delta_i\dots, A$
\DisplayProof
\]
\item The rule is $\Right\Contr$ and the principal formula is $B_k = \opa(\vec A)$.
The segment ends in:
\[
\Axiom$\Gamma_0 \fCenter \Delta_0, \overbrace{\opa(\vec A)}^{B'_{k'-1}}, \overbrace{\opa(\vec A)}^{B_{k-1}}$ 
\RightLabel{$\Right\Contr$}
\UnaryInf$\Gamma_0 \fCenter \Delta_0, B'_{k'} = B_k= \opa(\vec A)$
\Axiom$\cdots\quad\Pi_i, \Gamma_i \fCenter \Delta_i, \Lambda_i \quad\cdots$
\RightLabel{$\Elim\opa$}
\BinaryInf$\dots\Gamma_i\dots \fCenter \dots\Delta_i\dots$
\DisplayProof
\]
where only one of the occurrences of $\opa(\vec A)$ in the premise of
$\Right\Contr$ is the maximal formula occurrence $B_{k-1}$ of the
segment. (The other occurrence of $\opa(\vec A)$ possibly belongs to
another maximal segment ending in the $\Elim\opa$ inference, say, it
is $B'_{k'-1}$ in some segment consisting of $B'_1$, \dots,
$B'_{k'}$.) Replace these inferences by  
\[
\Axiom$\Gamma_0 \fCenter \Delta_0, \overbrace{\opa(\vec A)}^{B'_{k'-1}}, \overbrace{\opa(\vec A)}^{B_{k-1}}$
\Axiom$\cdots\  \Pi_i, \Gamma_i \fCenter \Delta_i, \Lambda_i\ \cdots$
\RightLabel{$\Elim\opa$}
\BinaryInf$\dots\Gamma_i\dots \fCenter \dots\Delta_i,\dots, B'_{k'} = \opa(\vec A)$
\Axiom$\cdots\ \Pi_i, \Gamma_i \fCenter \Delta_i, \Lambda_i\ \cdots$
\RightLabel{$\Elim\opa$}
\BinaryInf$\dots\Gamma_i\dots \fCenter \dots\Delta_i,\Delta_i\dots$
\doubleLine
\UnaryInf$\dots\Gamma_i\dots \fCenter \dots\Delta_i\dots$
\DisplayProof
\]
This changes maximal segments as follows: The segment under
consideration now ends at $B_{k-1}$, and has therefore decreased in
length. Any segment ending in the original $\Elim\opa$ inference which
contained the other contracted formula occurrence $B'_{k'-1}$ now ends
at the lower $\Elim\opa$ inference, and is of the same length ($k'$) as
before. Because of property (b) of the topmost segment under
consideration, there are no segments of maximal degree passing through
or lying above the minor premises of the $\Elim\opa$ rule, and so the
duplication of the subproofs ending in these minor premises has no
effect on the number of segments of maximal degree.

\item The rule is a $\Intro\opb$ rule, with a principal formula $\opb(\vec
C)$, but the principal formula $\opb(\vec C)$ is not $B_{k-1} =
\opa(\vec A)$.  Exactly one of the premises must belong to the segment
being considered, without loss of generality assume the first is. Then
that premise is of the form $\Pi'_1, \Gamma'_1 \fCenter \Delta_1',
B_{k-1} = \opa(\vec A), \Lambda_1$.  Suppose the other premises of the
$\Intro\opb$ inference are the sequents $\Pi_j', \Gamma_j' \fCenter \Delta_j',
\Lambda_j'$ (the formulas in $\Pi_i'$ and $\Lambda_i'$ are the
auxiliary formulas). The conclusion of the inference is $\Gamma_1',
\dots, \Gamma_n' \fCenter \Delta_1', \dots, \Delta_n', B_k = \opa(\vec
A), \opb(\vec C)$. 

Let $\Pi_i, \Gamma_i \fCenter \Delta_i,\Lambda_i$ be the minor premises
of the $\Elim\opa$ rule.  If we let 
\begin{align*}
  \Gamma & = \Gamma_1, \dots, \Gamma_n \\
  \Gamma' & = \Gamma_1', \dots, \Gamma_m'\\ 
  \Delta & = \Delta_1, \dots, \Delta_n \\
  \Delta' & = \Delta_1', \dots, \Delta_m'
\end{align*}
the last inference in the segment has the following form:
\[
  \Axiom$\Pi_1', \Gamma_1' \fCenter \Delta_1', \overbrace{\opa(\vec A)}^{B_{k-1}}, \Lambda_1'$
  \Axiom$\cdots\ 
    \Pi_j', \Gamma_j' \fCenter \Delta_j', \Lambda_j' 
    \ \cdots$
  \RightLabel{$\Intro\opb$}
  \BinaryInf$\Gamma' \fCenter \Delta', B_k = \opa(\vec A), \opb(\vec C)$
  \Axiom$\cdots\  
  \Pi_i, \Gamma_i \fCenter \Delta_i, \Lambda_i
  \ \cdots$
\RightLabel{$\Elim\opa$}
\BinaryInf$\Gamma, \Gamma' \fCenter \Delta, \Delta', \opb(\vec C)$
\DisplayProof
\]
where $\dots \Pi_j', \Gamma_j' \fCenter \Delta_j', \Lambda_j' \dots$ are
the premises of the $\Intro\opb$ inference other than the first.  Replace
the inferences with
\[
  \Axiom$\Pi_1', \Gamma_1' \fCenter \Delta_1', \overbrace{\opa(\vec A)}^{B_{k-1}}, \Lambda_1'$ 
  \Axiom$\cdots\  
    \Pi_i, \Gamma_i \fCenter \Delta_i, \Lambda_i \ \cdots$ 
  \RightLabel{$\Elim\opa$}
  \BinaryInf$\Pi_1', \Gamma_1', \Gamma \fCenter \Delta_1', \Delta, \Lambda_1'$
  \Axiom$\cdots\
  \Pi_j', \Gamma_j' \fCenter \Delta_j', \Lambda_j' 
  \ \cdots$
\RightLabel{$\Intro\opb$}
\BinaryInf$\Gamma, \Gamma' \fCenter \Delta, \Delta', \opb(\vec C)$
\DisplayProof
\]
\item The last inference is $\opb E$. This is treated as the previous
  one, except that we now have to distinguish cases according to
  whether the segment runs through the major premise or one of the
  minor premises. Again, since $\opa(\vec A)$ must occur in the
  context of one of the premises, the $\Elim\opa$ rule can first be
  applied to that premise, and the $\opb E$ rule then to its original
  premises, with the one premise belonging to the segment replaced
  with the conclusion of the $\Elim\opa$ rule.
\end{enumerate}

Now consider a topmost maximal segment of length~$1$. We have that
$S_1 = S_k$ is both the conclusion of a $\Intro\opa$ or $\Right\Weak$
rule and the major premise of a $\Elim\opa$ rule with $B_1=B_k$ the
principal formula. In the second case, the segment in question has the
form
\[
  \Axiom$\Gamma_0 \fCenter \Delta_0$
  \RightLabel{$\Right\Weak$}
  \UnaryInf$\Gamma_0 \fCenter \Delta_0, \opa(\vec A)$
  \Axiom$\dots\quad 
  \Pi_i, \Gamma_i \fCenter \Delta_i, \Lambda_i 
  \quad\dots$
\RightLabel{$\Elim\opa$}
\BinaryInf$\Gamma_0, \dots \Gamma_i \dots \fCenter \Delta_0, \dots \Delta_i\dots$
\DisplayProof
\]
This segment can be replaced by the premise of the $\Right\Weak$ rule
followed by $\Right\Weak$ to add the $\Delta_i$ on the right:  
\[
  \Axiom$\Gamma_0 \fCenter \Delta_0$
  \RightLabel{$\Right\Weak$}
  \doubleLine
  \UnaryInf$\Gamma_0 \fCenter \Delta_0, \dots\Delta_i\dots$
  \DisplayProof
\]
Any inferences below which discharge assumptions in $\Gamma_1$, \dots,
$\Gamma_n$ are now vacuous.

The first case is the crucial one.  The segment is of the form
\[
  \Axiom$\dots\quad 
    \Pi_i, \Gamma_i \fCenter \Delta_i, \Lambda_i 
    \quad\dots$ 
  \RightLabel{$\Intro\opa$}
  \UnaryInf$\dots\Gamma_i\ldots \fCenter 
    \dots\Delta_i\dots, \opa(\vec A)$
  \Axiom$\dots\quad
  \Pi_j', \Gamma_j' \fCenter \Delta_j', \Lambda_j'
  \quad\dots$
\RightLabel{$\Elim\opa$}
\BinaryInf$\dots\Gamma_i\dots\Gamma_j'\ldots \fCenter
  \dots\Delta_i\dots \Delta_j'\dots$
\DisplayProof
\]
As in the cut elimination theorem, the premises $\Pi_i \fCenter
\Lambda_i$ of the $\Intro\opa$ rule together with the minor premises
$\Pi_i ' \fCenter \Lambda_i'$ of the $\Elim\opa$ rule (with the
context formulas $\Gamma_i$, $\Gamma_i'$, $\Delta_i$, $\Delta_i'$
removed) form an unsatisfiable set of Kowalski clauses. A resolution
refutation of these clauses results in a derivation of the empty
sequent $\fCenter$. If any literal is removed from such a derivation,
the resolution refutation can be pruned to yield a possible shorter
refutation.  For any assumption $x \colon A$ not discharged by the
$\Elim\opa$ and $\Intro\opa$ rules, remove the corresponding negative
literal from the initial set of clauses and prune the refutation.
(These cases correspond to what are usually called simplification
conversions.)

Now add the context formulas $\Gamma_i$, $\Gamma_j'$, $\Delta_i$,
$\Delta_j'$, to get a derivation of $\dots\Gamma_i,\Gamma_i'\dots
\fCenter \dots\Delta_i,\Delta_j\dots$ from the premises of the
$\Intro\opa$ and the minor premises of the $\Elim\opa$ rule using cuts
and structural rules only. We replace the maximal segment with this
derivation and eliminate the \cut s.

Because the maximal segment beginning with $\Intro\opa$ is topmost, and
the derivations ending in the minor premises of $\Elim\opa$ contain no
segments of maximal degree, the resulting derivation does not contain
any new segments of maximal degree. We have removed one segment of
maximal degree. If the segment in question was the only maximal
segment of degree~$m(\delta)$, the maximal degree of segments in the
resulting derivation is $< m(\delta)$. Otherwise, we have removed at
least one maximal segment, and thus reduced the number of the segments
of maximal degree by~$1$. Thus, either the maximal degree of the
resulting derivation is $< m(\delta)$ (if the segment was the only
segment of degree $m(\delta)$), or the number of segments of
maximal degree in the resulting derivation is $< i(\delta)$.
\end{proof}

To illustrate the differences to the case of normalization for
single-conclusion systems, consider the derivation fragment
\[
        \Axiom$\fCenter A, B, C$
        \RightLabel{$\Intro\lor$}
        \UnaryInf$\fCenter A \lor B^a, C$
        \Axiom$\fCenter D$
        \RightLabel{$\Right\Weak$}
        \UnaryInf$\fCenter A \lor B^b, D$
      \RightLabel{$\Intro\land$}
      \BinaryInf$\fCenter A \lor B^a, A \lor B^b, C \land D^c$
    \RightLabel{$\Right\Contr$}
    \UnaryInf$\fCenter A \lor B^{a,b}, C \land D^c$
    \Axiom$C, D \fCenter E$
    \RightLabel{$\Elim\land$}
    \BinaryInf$\fCenter A \lor B^{a,b}, E$
  \Axiom$A \fCenter F$
  \Axiom$B \fCenter F$
  \RightLabel{$\Elim\lor$}
  \TrinaryInf$\fCenter E, F$
\DisplayProof\]
This derivation fragment contains three overlapping segments, the
formulas labelled $a$, $b$, and $c$ respectively. Assuming that $A
\lor B$ and $C \land D$ are of the same degree, all three are segments
of maximal degree. The segment labelled $c$ with maximum formula $C
\land D$ is not topmost, since condition (a) is violated. We must pick
one of the other segments, say the one labelled $a$. (We assume that
no segments of maximal degree run through the minor premises of
$\Elim\lor$ and $\Elim\land$.) We first permute the $\Elim\lor$ rule
across the $\Elim\land$ rule to obtain:
\[
        \Axiom$\fCenter A, B, C$
        \RightLabel{$\Intro\lor$}
        \UnaryInf$\fCenter A \lor B^a, C$
        \Axiom$\fCenter D$
        \RightLabel{$\Right\Weak$}
        \UnaryInf$\fCenter A \lor B^b, D$
      \RightLabel{$\Intro\land$}
      \BinaryInf$\fCenter A \lor B^a, A \lor B^b, C \land D^c$
    \RightLabel{$\Right\Contr$}
    \UnaryInf$\fCenter A \lor B^{a,b}, C \land D^c$
    \Axiom$A \fCenter F$
    \Axiom$B \fCenter F$
  \RightLabel{$\Elim\lor$}
  \TrinaryInf$\fCenter C \land D^c, F$
  \Axiom$C, D \fCenter E$
\RightLabel{$\Elim\land$}
\BinaryInf$\fCenter E, F$
\DisplayProof
\]
Now the $\Elim\lor$ follows a contraction; we replace it with two $\lor
E$ rules, the first of which ends the segment $a$.
\[
          \Axiom$\fCenter A, B, C$
          \RightLabel{$\Intro\lor$}
          \UnaryInf$\fCenter A \lor B^a, C$
          \Axiom$\fCenter D$
          \RightLabel{$\Right\Weak$}
          \UnaryInf$\fCenter A \lor B^b, D$
        \RightLabel{$\Intro\land$}
        \BinaryInf$\fCenter A \lor B^a, A \lor B^b, C \land D^c$
        \Axiom$A \fCenter F$
        \Axiom$B \fCenter F$
      \RightLabel{$\Elim\lor$}
      \TrinaryInf$\fCenter A \lor B^b, C \land D^c, F$
      \Axiom$A \fCenter F$
      \Axiom$B \fCenter F$
    \RightLabel{$\Elim\lor$}
    \TrinaryInf$\fCenter C \land D^c, F, F$
    \RightLabel{$\Right\Contr$}
    \UnaryInf$\fCenter C \land D^c, F$
  \Axiom$C, D \fCenter E$
\RightLabel{$\Elim\land$}
\insertBetweenHyps{\hspace{-8em}}
\BinaryInf$\fCenter E, F$
\DisplayProof
\]
Note that only one premise of the $\Intro\lor$ rule contains the maximal
formula of the segment labelled $a$---the left one. We obtain:
\[
          \Axiom$\fCenter A, B, C$
          \RightLabel{$\Intro\lor$}
          \UnaryInf$\fCenter A \lor B^a, C$
          \Axiom$A \fCenter F$
          \Axiom$B \fCenter F$
          \RightLabel{$\Elim\lor$}
          \TrinaryInf$\fCenter F, C$
        \Axiom$\fCenter D$
        \RightLabel{$\Right\Weak$}
        \UnaryInf$\fCenter A \lor B^b, D$
      \RightLabel{$\Intro\land$}
      \BinaryInf$\fCenter A \lor B^b, F, C \land D^c$    
      \Axiom$A \fCenter F$
      \Axiom$B \fCenter F$
    \RightLabel{$\Elim\lor$}
    \TrinaryInf$\fCenter C \land D^c, F, F$
  \RightLabel{$\Right\Contr$}
  \UnaryInf$\fCenter C \land D^c, F$
  \Axiom$C, D \fCenter E$
\RightLabel{$\Elim\land$}
\insertBetweenHyps{\hspace{-8em}}
\BinaryInf$\fCenter E, F$
\DisplayProof
\]
The segment labelled $a$ is now of length $1$ and can be removed. The
segment labelled $b$ is the new topmost segment of maximal degree; the
segment labelled $c$ is considered when that one is removed.

The reader familiar with the normalization proof of \cite{Prawitz1965}
will of course realize that the structure of the preceding proof
mirrors that of Prawitz's proof very closely.  By generalizing the
proof, however, we see that its success does not at all depend on the
specific logical inference rules used.  The crucial steps are
eliminating segments of length~1, and permuting elimination rules
upward across any inference in which the formula eliminated by $\opa
E$ is not the principal formula of an $\Intro\opa$ rule eliminated by the
$\Elim\opa$ rule.  The first step always works if the premises of any
$\Intro\opa$ and $\Elim\opa$ inference are jointly unsatisfiable clauses, as
is the case with the introduction and general elimination rules
constructed by our general method.  The second step also always works,
as long as the calculus has the general form of $\Nms X$, although not
all of its features are essential.  For instance, we can modify the
proof to work on a calculus with sequences instead of multisets of
formulas but with exchange rules, simply by not counting exchange
inferences in the length of segments.  However, we run into problems
if left contraction is explicitly present. In
conversion reductions, contractions are used to simulate resolution
inferences by cuts. These cannot in general be avoided, even if the
resolution proof is Horn.  For instance, consider
\[
  \Axiom$\Gamma \fCenter A$ 
  \Axiom$A, A, \Pi \fCenter \Lambda$
  \RightLabel{$\Left\Contr$}
  \UnaryInf$A, \Pi \fCenter \Lambda$
\RightLabel{$\cut$}
\BinaryInf$\Gamma, \Pi \fCenter \Delta, \Lambda$
\DisplayProof
\]
Although such a cut can be replaced by two cuts, it multiplies the
context formulas:
\[
  \Axiom$\Gamma \fCenter A$ 
    \Axiom$\Gamma \fCenter A$
    \Axiom$A, \Pi \fCenter \Lambda$
  \RightLabel{$\cut$}
  \BinaryInf$A, \Gamma, \Pi \fCenter \Delta, \Lambda$
\RightLabel{$\cut$}
\BinaryInf$\Gamma, \Gamma, \Pi \fCenter \Delta, \Delta, \Lambda$
\DisplayProof
\]
To obtain the original conclusion, we would need contraction
again. One might consider replacing the cut rule with a multi-cut or
``mix'' rule that allows the removal of any number of occurrences of
the cut formula to avoid the difficulty.  Such a rule, however, allows
us to simulate contraction (by applying mix to a suitable initial
sequent), and so nothing is gained.  In natural deduction where
assumptions are labelled, these problems are avoided since the work of
contractions on the right is done by having assumptions in different
initial sequents sharing a label.

\begin{prop}
A calculus $\Nmp X$ resulting from $\Nmsl X$ by replacing any general
elimination rules by specialized elimination rules also normalizes.
\end{prop}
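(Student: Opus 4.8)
The plan is to rerun the proof of Theorem~\ref{thm:normalization} for $\Nmp X$, changing only the single point at which the precise shape of the elimination rule is used. Since a specialized rule $\Elim\opa'$ still takes a major premise $\Gamma \fCenter \Delta, \opa(\vec A)$ whose principal formula is a single occurrence of $\opa(\vec A)$, the notion of maximal segment (now with $S_k$ the major premise of an $\Elim\opa'$ inference), the measures $m(\delta)$ and $i(\delta)$, and the outer induction on $(m(\delta), i(\delta))$ all carry over verbatim. First I would check the length-reducing permutations, which are classified by the rule whose conclusion is the major premise $S_k$. Each step commutes the whole $\Elim\opa'$ inference, with all its minor premises, upward past the inference producing $S_k$; since only the passage of the single side occurrence of $\opa(\vec A)$ is used, and neither the number of minor premises nor the exact form of the conclusion of $\Elim\opa'$, these cases go through as before, the only extra bookkeeping being that a specialized rule may deposit auxiliary formulas on either side of its conclusion, which the permutation carries along just like the context formulas.

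The one genuinely new step will be the conversion of a topmost segment of length~$1$, where $\opa(\vec A)$ is introduced by $\Intro\opa$ and at once consumed as the major premise of $\Elim\opa'$. Here I would use the observation (Section~\ref{sec:specialize}) that $\Elim\opa'$ comes from a possibly split general rule by deleting minor premises, each a unit clause $D_s = \{\ell_s\}$ of $\C(\opa)^-$, and depositing the complementary literal in the conclusion. The premises of $\Intro\opa$ realize $\C(\opa)^+$ and the retained minor premises realize $\C(\opa)^- \setminus \{D_1, \dots, D_r\}$; since $\C(\opa)^+ \cup \C(\opa)^-$ is unsatisfiable, this set together with the deleted $D_s$ is unsatisfiable, so in place of a resolution \emph{refutation} one obtains a resolution \emph{derivation}, from the realized premises alone, of $\{\overline{\ell_1}, \dots, \overline{\ell_r}\}$ (up to a final weakening). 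Read as a Kowalski sequent, this clause is exactly the packet of auxiliary formulas that $\Elim\opa'$ deposits, so after restoring the context formulas I get a derivation of the conclusion of the $\Elim\opa'$ inference using only $\cut$ on the arguments $A_i$. Vacuously discharged assumptions are pruned at the level of literals, as in Theorem~\ref{thm:normalization}.

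To finish, I would remove these cuts using the substitution operation from the cut-elimination argument for $\Nmsl X$, extended so as to cover a specialized rule that \emph{deposits} an assumption $A_i$ in its antecedent: substituting a derivation of $A_i$ for such a deposited occurrence re-folds that derivation into the rule as the corresponding minor premise, which is just the reverse of the specialization of Section~\ref{sec:specialize}. The point that closes the induction is that every $\cut$ introduced is on an argument $A_i$ of $\opa(\vec A)$, hence of degree $< m(\delta)$: grafting at an initial sequent $A_i \fCenter A_i$ can create only detours of degree $< m(\delta)$, and re-folding a deposited occurrence changes the kind of an elimination without changing the degree of any segment. With property~(b) of the chosen topmost segment (no minor premise of the $\Elim\opa'$ inference lies on or below a segment of degree $m(\delta)$) this ensures no new segment of maximal degree appears, so $(m(\delta), i(\delta))$ drops lexicographically.

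I expect the main obstacle to be exactly this last step: confirming that the extended substitution, in the presence of specialized rules that add antecedent assumptions, eliminates the argument-cuts while introducing only segments of degree $< m(\delta)$. This is the same delicacy noted before the proposition---that simulating deleted premises requires cuts, so specialized rules are proof-theoretically less well behaved than general ones---and the whole argument hinges on keeping it under control via the fact that all these cuts fall on proper subformulas of the maximal formula.
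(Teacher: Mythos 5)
Your proof takes essentially the same route as the paper's: the paper likewise reduces the length-$1$ case to the unsatisfiability of the clauses of the \emph{general} elimination rule together with those of the introduction rule, and handles each deleted minor premise $A_i \fCenter {}$ by adding $A_i$ to its right side (turning it into an initial sequent) and propagating $A_i$ through the refutation---which is exactly your pruned resolution derivation of the residual clause $\{A_1, \dots, A_r\}$ viewed at the sequent level. The only place you go beyond the paper's sketch is in explicitly worrying about antecedent-depositing specializations and the corresponding extension of the substitution operation, which the paper leaves implicit.
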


\begin{proof}
To verify that segments of length~1 can always be removed, we have to
establish that there are always simplification conversions for $\opa
I$ rules followed immediately by a specialized $\Elim\opa'$ rule.  The
set of clauses corresponding to the premises of the original (or
derived) general elimination rule $\Elim\opa$ together with the clauses
corresponding to the premises of the $\Intro\opa$ rule are unsatisfiable.
We again obtain a derivation using only cuts and structural rules of
the conclusion of the $\Elim\opa'$ inference by adding the context
formulas $\Gamma_i$, $\Delta_i$ to the clauses corresponding to the
premises of $\Intro\opa$, and adding $\Gamma_j$, $\Delta_j$ to the clauses
corresponding to the minor premises of $\Elim\opa'$.  The clauses
corresponding to the missing minor premises are all of the form $x
\colon A \fCenter$.  Add $A$ to its right side and every sequent
inferred from it. The clause thus turns into an initial sequent; $A$
remains present in the succedent of any sequent derived from $x
\colon A \fCenter A$, thus also in the last sequent of the derivation
fragment.

One easily verifies that $\Elim\opa'$ rules also permute across other
rules.
\end{proof}

\section{Single-conclusion sequent calculi}
\label{sec:sclk}

From the multiple conclusion sequent calculus $\LK$ we obtain a
single-conclusion system $\LJ$ by restricting the succedent in every
rule and every sequent in a proof to contain at most one formula.  The
calculus $\LJ$ is sound and complete for intuitionistic logic.  This
idea can be applied to any calculus $\L X$, of course, and results in
an ``intuitionistic'' variant of the calculus.

We begin by considering the sequent calculus $\Ls X$ resulting from
$\L X$ by restricting the rules in such a way that the succedent is
guaranteed to contain at most one formula. This requires first of all
that the premises of each rule have at most one auxiliary formula in
the succedent. This can always be achieved by replacing a rule that
does not satisfy this condition by split rules that do (see
Corollary~\ref{intuitionistic-rules}).  We will assume that the
logical rules of $\L X$ do satisfy this condition.

The restriction in $\Ls X$ requires that in any application of a rule,
the succedent of the premises and conclusion contains at most one
formula.  This means that in every right rule, $\Delta$ is empty.
Furthermore, if a premise of a left rule has an auxiliary formula
$\Lambda_i$ in the succedent, $\Delta$ must also be empty in that
premise (i.e., there is no side formula). If all premises of a left
rule have an auxiliary formula on the right, then no premise allows a
side formula~$\Delta$, and the succedent of the conclusion of the
rule is empty. We'll call rules of this form \emph{restricted}.

The $\Right\Weak$ and $\cut$ rules now take the form:
\[
\Axiom$\Gamma \fCenter $
\RightLabel{$\Right\Weak$}
\UnaryInf$\Gamma \fCenter C$
\DisplayProof
 \qquad
\RightLabel{$\cut$}
  \Axiom$\Gamma \fCenter A$
  \Axiom$A, \Pi \fCenter \Lambda$
\BinaryInf$\Gamma, \Pi \fCenter \Lambda$
\DisplayProof
\]
where $\Lambda$ may contain at most one formula.

The intuitionistic sequent calculus~$\LJ$ is the single-conclusion
sequent calculus corresponding to the multi-conclusion classical
sequent calculus~$\LK$ obtained in this way. 

\begin{prop}
If $\Ls X$ proves $\Gamma \fCenter \Delta$ (and $\Delta$ thus contains
at most one formula), then $\L X$ does as well.  The converse does not
hold in general.
\end{prop}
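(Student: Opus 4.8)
The plan is to prove the implication by a straightforward induction on the height of the $\Ls X$ derivation, and to refute the converse by the familiar $\LK$/$\LJ$ example. The guiding observation is that $\Ls X$ is not a genuinely new calculus but a \emph{restriction} of $\L X$: every sequent occurring in an $\Ls X$ derivation is already a legal $\L X$ sequent, and almost every $\Ls X$ inference is literally an instance of the corresponding $\L X$ inference.

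For the forward direction I would argue by induction on the $\Ls X$ derivation $\delta$ of $\Gamma \fCenter \Delta$. An initial sequent $A \fCenter A$ is an initial sequent of $\L X$. If $\delta$ ends in a structural rule or in a right rule, the restricted form is simply the $\L X$ rule with $\Delta$ specialized: a right rule of $\Ls X$ is the $\L X$ rule $\Right\opa$ with $\Delta = \emptyset$, and the restricted $\Right\Weak$ and $\cut$ are the $\L X$ rules in which the relevant succedents are empty or a singleton. In all these cases the inductive step is to apply the induction hypothesis to the premises and then the identical $\L X$ rule.

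The one case needing a small repair is the restricted left rule, and this is where the main obstacle lies. In $\Ls X$ a premise of $\Left\opa$ carrying an auxiliary formula on the right has its side formula $\Delta$ suppressed (so that its succedent stays a singleton), whereas the $\L X$ rule $\Left\opa$ demands the \emph{same} side context $\Delta$ in every premise and in the conclusion; hence the restricted inference is not, as it stands, an $\L X$ inference. To fix this, given an $\Ls X$ application with conclusion $\opa(\vec A), \Gamma \fCenter \Delta$ (where $\Delta$ is empty or a single formula $C$), I would take the $\L X$ derivations of the premises furnished by the induction hypothesis and, for each premise $\Pi_i, \Gamma \fCenter \Lambda_i$ in which $\Lambda_i$ is nonempty and $\Delta$ was therefore dropped, insert a $\Right\Weak$ (and an exchange) to obtain $\Pi_i, \Gamma \fCenter \Delta, \Lambda_i$. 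The premises with $\Lambda_i$ empty already have the shape $\Pi_i, \Gamma \fCenter \Delta$. Now every premise carries the common side context $\Delta$, so the genuine $\L X$ rule $\Left\opa$ applies and delivers exactly $\opa(\vec A), \Gamma \fCenter \Delta$. The end-sequent is unchanged, so the induction goes through. This is essentially the weakening argument already used to show that $\L X$ is equivalent to $\Lc X$ and to $\L X^*$.

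For the converse it suffices to exhibit one $X$ and one single-succedent sequent that is derivable in $\L X$ but not in $\Ls X$. Taking $X$ to be the usual classical connectives, $\L X$ is $\LK$ and $\Ls X$ is $\LJ$, and $\fCenter A \lor \lnot A$ is derivable in $\LK$ but not in $\LJ$ (one could equally use $\lnot\lnot A \fCenter A$ or Peirce's law). Since $\LJ$ is sound for intuitionistic logic, in which excluded middle is invalid, no $\LJ$ derivation of $\fCenter A \lor \lnot A$ can exist, while the $\LK$ derivation is immediate; this witnesses the failure of the converse.
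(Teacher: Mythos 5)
Your proposal is correct and follows essentially the same route as the paper: translate the $\Ls X$ derivation directly into $\L X$, inserting right weakenings (and exchanges) to restore the shared side context $\Delta$ in the premises of left rules, and refute the converse with $\fCenter A \lor \lnot A$ in $\LK$ versus $\LJ$. You have merely spelled out the induction that the paper's one-sentence proof leaves implicit.
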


\begin{proof}
Proofs in $\Ls X$ can be translated into proofs in $\L X$ directly,
adding weakenings to provide the missing shared side formulas~$\Delta$
in succedents of left rules~$\L X$ if necessary.  A counterexample to
the converse is given by ${} \fCenter A \lor \lnot A$, which is provable
in $\LK$ but not in~$\LJ$.
\end{proof}

Not every set of restricted rules will result in a reasonable
single-conclusion sequent calculus. For instance, consider the
restricted rules for $\nif$, the negated conditional (aka
``exclusion'') in which the left and right rules for $\lif$ are
reversed:
\[
  \Axiom$A, \Gamma \fCenter B$
  \RightLabel{$\Left\nif$}
  \UnaryInf$A \nif B, \Gamma \fCenter$
  \DisplayProof
 \qquad
    \Axiom$\Gamma \fCenter A$
    \Axiom$B, \Gamma \fCenter$
  \RightLabel{$\Right\nif$}
  \BinaryInf$\Gamma \fCenter A \nif B$
\DisplayProof
\]
Note that the succedent is required to be empty in the right premise
of $\Right\nif$ and in the conclusion of $\Left\nif$. Because of this, $A
\nif B \fCenter A \nif B$ cannot be derived from $A \fCenter A$ and $B
\fCenter B$. In the regular sequent calculus for $\nif$ we would have:
\[
        \Axiom$A \fCenter A$
        \doubleLine
        \Axiom$B \fCenter B$
        \UnaryInf$B, A \fCenter B$ 
      \RightLabel{$\Right\nif$}
      \BinaryInf$A \fCenter B, A \nif B$
    \doubleLine
    \UnaryInf$A \fCenter A \nif B, B$
  \RightLabel{$\Left\nif$}
  \UnaryInf$A \nif B \fCenter A \nif B$
  \DisplayProof
  \qquad
      \Axiom$A \fCenter A$
      \doubleLine
      \UnaryInf$A \fCenter A, B$
    \RightLabel{$\Left\nif$}
    \UnaryInf$A \nif B \fCenter A$
        \Axiom$B \fCenter B$
        \doubleLine
        \UnaryInf$A, B \fCenter B$
      \RightLabel{$\Left\nif$}
      \UnaryInf$A \nif B, B \fCenter$
    \doubleLine
    \UnaryInf$B, A \nif B \fCenter$
  \RightLabel{$\Right\nif$}
  \BinaryInf$A \nif B \fCenter A \nif B$
\DisplayProof
\]
These are not correct derivations in the restricted
calculus.\footnote{The inability of a set of rules to derive
$\opa(\vec A) \fCenter \opa(\vec A)$ from $A_i \fCenter A_i$ is of course
not a proof that the rules are incomplete, especially since we do not
have a semantics with respect to which the question can be posed.
However, assuming the semantics validate substitution of equivalent
formulas and there are pairs of equivalent formulas, cut-free
incompleteness follows. For instance, it will be impossible to give a
cut-free proof of $\opa(A) \fCenter \opa(A \land A)$ without deriving it
from sequents $A \fCenter A$, $A \fCenter A \land A$, and $A \land A
\fCenter A$ using only the rules for $\opa$, whatever they may be.} To
obtain a set of rules in which it is possible to give such a
derivation, we must split the $\Left\nif$ rule further to guarantee that
no premise has auxiliary formulas on both the left and the right side:
\[
    \Axiom$A, \Gamma \fCenter \Delta$
  \RightLabel{$\Left\nif_1$}
  \UnaryInf$A \nif B, \Gamma \fCenter \Delta$
\DisplayProof
\qquad
    \Axiom$\Gamma \fCenter B$
  \RightLabel{$\Left\nif_2$}
  \UnaryInf$A \nif B, \Gamma \fCenter $
\DisplayProof
\]
Now the derivation can be carried out:
\[
      \Axiom$A \fCenter A$
    \RightLabel{$\Left\nif_1$}
    \UnaryInf$A \nif B \fCenter A$
        \Axiom$B \fCenter B$
        \RightLabel{$\Left\nif_2$}
        \UnaryInf$A \nif B, B \fCenter$
    \doubleLine
    \UnaryInf$B, A \nif B \fCenter $
  \RightLabel{$\Right\nif$}
  \BinaryInf$A \nif B \fCenter A \nif B$
\DisplayProof
\]
(This corresponds to the right of the two derivations above; a version
corresponding to the left one, where we first apply $\Right\nif$ and then
$\Left\nif$ is not possible since the right premise of the restricted
$\Right\nif$ must have empty succedent.)

Perhaps surprisingly, insufficient intuitionistic calculi can also
result from splitting rules too much. Consider the restricted rules
for \textsc{nand}, i.e., the Sheffer stroke:
\[
  \Axiom$A, B, \Gamma \fCenter$
\RightLabel{$\Right\mid$}
\UnaryInf$\Gamma \fCenter A \mid B$
\DisplayProof
\qquad
  \Axiom$\Gamma \fCenter A$
  \Axiom$\Gamma \fCenter B$
\RightLabel{$\Left\mid$}
\BinaryInf$A \mid B, \Gamma \fCenter$
\DisplayProof
\]
These can derive $A \mid B \fCenter A \mid B$ from atomic sequents:
\[
        \Axiom$A \fCenter A$
      \doubleLine
      \UnaryInf$A, B \fCenter A$
        \Axiom$B \fCenter B$
      \doubleLine
      \UnaryInf$A, B \fCenter B$
    \RightLabel{$\Left\mid$}
    \BinaryInf$A \mid B, A, B \fCenter$
  \doubleLine
  \UnaryInf$A, B, A \mid B \fCenter$
\RightLabel{$\Right\mid$}
\UnaryInf$A \mid B \fCenter A \mid B$
\DisplayProof
\]
However, this is not possible when the $\Right\mid$ rule is split into
\[
  \bottomAlignProof
  \Axiom$A, \Gamma \fCenter$
\RightLabel{$\Right\mid_1$}
\UnaryInf$\Gamma \fCenter A \mid B$
\DisplayProof
\qquad\text{and}\qquad
  \bottomAlignProof
  \Axiom$B, \Gamma \fCenter$
\RightLabel{$\Right\mid_2$}
\UnaryInf$\Gamma \fCenter A \mid B$
\DisplayProof
\]
In the unrestricted calculus, the last inference of the above proof
can be replaced by
\[
        \Axiom$A, B, A \mid B \fCenter$
      \RightLabel{$\Right\mid_1$}
      \UnaryInf$B, A \mid B \fCenter A \mid B$
    \RightLabel{$\Right\mid_2$}
    \UnaryInf$A \mid B \fCenter A \mid B, A \mid B$
  \RightLabel{$\Right\Contr$}
  \UnaryInf$A \mid B \fCenter A \mid B$
\DisplayProof
\]
The application of $\Right\mid_2$ is not allowed in the restricted
calculus, since there the right side of the premise is restricted to
be empty.

\begin{table}
  \caption{Single conclusion sequent rules for some unusual connectives}
\[
\begin{array}{ccc}
\hline\hline
\text{connective} & \text{rules}\\ \hline\\ 
A \nif B & 
  \Axiom$\Gamma \fCenter A$
  \Axiom$B, \Gamma \fCenter$
  \RightLabel{$\Right\nif$}
\BinaryInf$\Gamma \fCenter A \nif B$
\DisplayProof
\\[2ex]
\text{(exclusion)} & 
  \Axiom$\Gamma \fCenter B$ 
\RightLabel{$\Left\nif_1$}
\UnaryInf$A \nif B, \Gamma \fCenter$
\DisplayProof
& 
\Axiom$A, \Gamma \fCenter \Delta$
\RightLabel{$\Left\nif_2$}
\UnaryInf$A \nif B, \Gamma \fCenter \Delta$
\DisplayProof \\[2ex] 
A \mid B \text{ (nand)} & 
\Axiom$A, B, \Gamma \fCenter$
\RightLabel{$\Right\mid$}
\UnaryInf$\Gamma \fCenter A \mid B$
\DisplayProof
 & 
\Axiom$\Gamma \fCenter A$
\Axiom$\Gamma \fCenter B$
\RightLabel{$\Left\mid$}
\BinaryInf$A \mid B, \Gamma \fCenter $
\DisplayProof 
\\[2ex]
A \nor B \text{ (nor)} 
&  
\Axiom$A, \Gamma \fCenter$
\Axiom$B, \Gamma \fCenter$
\RightLabel{$\Right\nor$}
\BinaryInf$\Gamma \fCenter A \nor B$
\DisplayProof 
\\[2ex]
& 
  \Axiom$\Gamma \fCenter A$
\RightLabel{$\Left\nor_1$}
\UnaryInf$A \nor B, \Gamma \fCenter$
\DisplayProof
& 
\Axiom$\Gamma \fCenter B$
\RightLabel{$\Left\nor_2$}
\UnaryInf$A \nor B, \Gamma \fCenter $
\DisplayProof \\[2ex]
A \xor B & 
\Axiom$\Gamma \fCenter A$
\Axiom$A, B, \Gamma \fCenter$
\RightLabel{$\Right\xor_1$}
\BinaryInf$\Gamma \fCenter A \xor B$
\DisplayProof 
 & 
   \Axiom$\Gamma \fCenter B$
   \Axiom$A, B, \Gamma \fCenter $
\RightLabel{$\Right\xor_2$}
\BinaryInf$\Gamma \fCenter A \xor B$
\DisplayProof 
\\[2ex]
\text{(xor)} 
&  
  \Axiom$A, \Gamma \fCenter \Delta$
  \Axiom$B, \Gamma \fCenter \Delta$ 
\RightLabel{$\Left\xor_1$}
\BinaryInf$A \xor B, \Gamma \fCenter \Delta$
\DisplayProof
&  
   \Axiom$\Gamma \fCenter B$
   \Axiom$\Gamma \fCenter A$
\RightLabel{$\Left\xor_2$}
\BinaryInf$A \xor B, \Gamma \fCenter $
\DisplayProof 
\\[2ex]
A \lif B / C 
& 
    \Axiom$A, \Gamma \fCenter B$
    \Axiom$\Gamma \fCenter A$
\RightLabel{$\Right\ite_1$}
\BinaryInf$\Gamma \fCenter A \lif B/C$
\DisplayProof
 & 
  \Axiom$A, \Gamma \fCenter B$
  \Axiom$\Gamma \fCenter C$
  \RightLabel{$\Right\ite_2$}
\BinaryInf$\Gamma \fCenter A \lif B/C$
\DisplayProof
\\[2ex]
      \text{(if then else)} 
& 
  \Axiom$A, B, \Gamma \fCenter \Delta$
  \Axiom$\Gamma \fCenter A$
\RightLabel{$\Left\ite_1$}
\BinaryInf$A \lif B/C, \Gamma \fCenter \Delta$
\DisplayProof
& 
  \Axiom$A, B, \Gamma \fCenter \Delta$
  \Axiom$C, \Gamma \fCenter \Delta$
\RightLabel{$\Left\ite_2$}
\BinaryInf$A \lif B/C, \Gamma \fCenter \Delta$
\DisplayProof\\[2ex]
  \hline\hline
\end{array}
\]
\end{table}

The question of when suitable restricted calculi $\Ls X$ are sound and
complete for (something like) an intuitionistic semantics will be the
topic of a future paper (but see \cite{BaazFermuller1996} and
\cite{GeuversHurkens2017} for results in this direction). However,
any calculus the rules of which satisfy the restriction, also has
cut elimination.

\begin{prop}
The cut elimination theorem holds for $\Ls X$.
\end{prop}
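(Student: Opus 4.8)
The plan is to run Gentzen's mix-elimination argument exactly as in the proof of the cut-elimination theorem for~$\L X$, and to verify at each step that the restriction to at most one succedent formula is preserved. As in that proof I would first replace $\cut$ by $\mix$ (these remain interderivable here: a single-conclusion mix takes a left premise whose only succedent formula is the mix formula~$A$, which is then deleted, together with a right premise $\Theta \fCenter \Xi$, to $\Gamma, \Theta^* \fCenter \Xi$, and so stays single-conclusion), and then show that a proof whose last inference is its only mix can be made mix-free, by double induction on the rank and degree of the mix. The non-critical cases—where the mix formula is not principal on both sides—are literally the permutations of the $\L X$ proof: each merely rearranges formulas and inserts structural inferences, and since every logical rule of $\Ls X$ is \emph{restricted} and the single-conclusion mix and $\Right\Weak$ preserve the one-formula succedent, every sequent produced is again single-conclusion.

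The crucial case is where $\opa(\vec A)$ is principal in a $\Right\opa$ inference on the left and a $\Left\opa$ inference on the right. Here I would pass to the premise clauses $\Pi_i \fCenter \Lambda_i$ (from $\C(\opa)^+$) and $\Pi_i' \fCenter \Lambda_i'$ (from $\C(\opa)^-$), whose union is unsatisfiable and hence has a resolution refutation yielding a derivation of $\fCenter$ using only mixes on the~$A_i$. The new observation is that in $\Ls X$ \emph{every} such clause is Horn: by hypothesis the logical rules satisfy the condition of Corollary~\ref{intuitionistic-rules}, so each premise has at most one auxiliary formula on the right, i.e.\ each clause has at most one positive literal. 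Since the resolvent of two Horn clauses is again Horn, every sequent occurring in the refutation already has at most one formula in its succedent.

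It remains to re-attach the side formulas without breaking this. On the left there is no difficulty: $\Gamma$ and $\Theta$ may be added to antecedents freely and the duplicate copies removed by left exchanges and contractions at the end, exactly as for~$\L X$. On the right, the right rule has $\Delta = \emptyset$, so the right-rule clauses acquire nothing in the succedent; the only succedent side formula is the single formula~$\Xi$ coming from the $\Left\opa$ conclusion. To keep $\Xi$ from accumulating I would use that two purely negative clauses can never be resolved with one another (they share no complementary literals), so any Horn refutation is linear in its negative clauses and uses exactly one of them as its goal. Adding $\Xi$ to the succedent of that single goal clause and carrying it unchanged through the mixes—which act only on the atoms~$A_i$, never on the side formula~$\Xi$—yields $\Gamma, \dots, \Theta \fCenter \Xi$; if the chosen goal clause has empty succedent instead, one derives $\Gamma, \dots, \Theta \fCenter{}$ and restores $\Xi$ by one $\Right\Weak$. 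Left contraction then gives the end-sequent $\Gamma, \Theta \fCenter \Xi$, and all remaining mixes have lower degree, so the induction hypothesis applies.

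The main obstacle is precisely this last bookkeeping: ensuring that splicing in the resolution-derived subproof together with its side formulas never creates a second succedent formula. This is exactly what the Horn property—forced by the single-conclusion restriction on the rules—secures, and it is the only point at which the argument genuinely departs from the $\L X$ case.
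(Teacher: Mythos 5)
Your proposal is correct and follows essentially the same route as the paper: both arguments run Gentzen's rank/degree induction and hinge on the observation that, because the split rules have at most one auxiliary formula on the right, the premise clauses are Horn, resolution preserves Horn clauses, and hence the mix chain obtained from the refutation never puts more than one formula in a succedent. Your extra bookkeeping about the single negative ``goal'' clause carrying the side formula $\Xi$ is just a more explicit rendering of the paper's terser remark that, since every left premise of a mix has the mix formula as its entire succedent, each mix's conclusion inherits only the right premise's (at most one-formula) succedent.
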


\begin{proof}
The rules for $\Ls X$ are obtained, if necessary, by first splitting
rules to guarantee that the premises of of the rule contain at most
one auxiliary formula on the right.  In this case, the clauses
corresponding to the premises of a rule for a
connective are Horn, i.e., they contain at most one positive literal.
Resolution on Horn clauses only produces Horn clauses.  Since the set
of clauses corresponding to the premises of a pair of $\Left\opa$ and
$\Right\opa$ rules is unsatisfiable (even when these are split rules), it
has a resolution refutation consisting only of Horn clauses.  As
before, we can add the side formulas of the actual premises to this
resolution refutation to obtain a derivation of the conclusion of the
\mix{} from the premises of the $\Left\opa$ and $\Right\opa$ rules. Since
each premise has at most one formula in the succedent, \mix{}
inferences cannot result in more than one formula on the right in the
resulting proof segment. Thus the \mix{} on $\opa(\vec A)$ can be
replaced with a sequence of \mix es with the subformulas~$A_i$ as
\mix{} formulas in~$\Ls X$.

When permuting \mix{} inferences with rules to reduce the rank, we
have to verify that the resulting inferences obey the restrictions of
the rules of $\Ls X$. First of all, observe that if the inference
ending in the left premise of the \mix{} is $\Right\opa$, the mix
formula must be $\opa(\vec A)$ and thus the left rank is~$1$. Thus we
never have to permute a \mix{} with an $\Right\opa$ rule on the left
side of a~\mix.

If the last inference on the right side of the \mix{} is $\opa
L$ introducing the \mix{} formula, we start from a derivation that has
the form
\[
    \Axiom$\Gamma \fCenter \opa(\vec A)$
    \Axiom$\cdots\quad \Pi_i', \Theta \fCenter \Xi_i, \Lambda_i' \quad\cdots$
    \RightLabel{$\Left\opa$}
    \UnaryInf$\opa(\vec A), \Theta \fCenter \Xi$
  \RightLabel{$\mix: \opa(\vec A)$}
  \BinaryInf$\Gamma, \Theta^* \fCenter \Delta^*, \Xi$
\DisplayProof
\]
where $\Xi_i$ is empty if $\Lambda_i'$ is not, and is $\Xi$ otherwise.
$\Xi$ itself is empty if no $\Lambda_i'$ is empty. This guarantees
that $\Xi$ and $\Xi_i, \Lambda'_i$ all contain at most one formula. 
This is converted to
\[
    \Axiom$\Gamma \fCenter \opa(\vec A)$
    \Axiom$\Pi_i', \Theta \fCenter \Xi_i, \Lambda_i'$
\RightLabel{$\mix: \opa(\vec A)$}
\BinaryInf$\Gamma, \Pi_i', \Theta^* \fCenter \Xi_i, \Lambda_i'$
\DisplayProof
\]
Since by induction hypothesis, this \mix{} can be removed, we get
\mix-free derivations of $\Pi_i', \Gamma, \Theta^* \fCenter \Xi_i,
\Lambda_i'$ (by applying suitable $\Left\Exch$ inferences). These are
exactly the premises of a correct $\Left\opa$ inference, as required.
Now consider
\[
    \Axiom$\Gamma \fCenter \opa(\vec A)$
    \Axiom$\cdots\quad \Pi_i', \Gamma, \Theta^* \fCenter \Xi_i, \Lambda_i' \quad\cdots$
    \RightLabel{$\Left\opa$}
    \UnaryInf$\opa(\vec A), \Gamma, \Theta^* \fCenter \Xi$
\RightLabel{$\mix: \opa(\vec A)$}
\BinaryInf$\Gamma, \Pi_i', \Theta^* \fCenter \Xi_i, \Lambda_i'$
\DisplayProof
\]
As $\opa(\vec A)$ does not occur in $\Pi_i', \Gamma, \Theta^*$, the
right rank is now~$1$, and by induction hypothesis the \mix{} can be
eliminated. 

The other interesting cases where the right premise is the conclusion
of $\Left\opa$ not introducing the \mix{} formula or of $\Right\opa$ are
similar. 
\end{proof}

As an example of reducing the degree of a \mix{} on a formula
introduced by restricted left and right rules other than the usual
ones in $\LJ$, consider the calculus for the Sheffer stroke from before.
A \mix{} of rank~$2$ on $A \mid B$,
\[
  \Axiom$A, B, \Gamma \fCenter$
  \RightLabel{$\Right\mid$}
  \UnaryInf$\Gamma \fCenter A \mid B$
      \Axiom$\Theta \fCenter A$
      \Axiom$\Theta \fCenter B$
    \RightLabel{$\Left\mid$}
    \BinaryInf$A \mid B, \Theta \fCenter $
  \RightLabel{$\mix: A \mid B$}
  \BinaryInf$\Gamma, \Theta \fCenter $
\DisplayProof
\]
is reduced to \mix{} inferences on $A$ and $B$:
\[
  \Axiom$\Theta \fCenter B$
    \Axiom$\Theta \fCenter A$
    \Axiom$A, B, \Gamma \fCenter $
  \RightLabel{$\mix: A$}
  \BinaryInf$\Theta, B, \Gamma \fCenter$
\RightLabel{$\mix: B$}
\BinaryInf$\Theta, \Theta, \Gamma \fCenter$
\DisplayProof
\]
This corresponds to the resolution refutation:
\[
\Axiom$\fCenter B$
  \Axiom$\fCenter A$
  \Axiom$A, B \fCenter$
  \BinaryInf$B \fCenter$
\BinaryInf$\fCenter$
\DisplayProof
\]

\section{Classical single-conclusion sequent calculi}
\label{sec:classical}

It is possible to turn an intuitionistic, single-conclusion
sequent calculus into a classical one without allowing multiple
formulas in the succedent. The simplest way to do this is to introduce
additional initial sequents, e.g., $\fCenter A \lor \lnot A$ or
$\lnot\lnot A \fCenter A$. In natural deduction, a classical system can
also be obtained from $\NJ$ by adding axioms, but it is more, well,
\emph{natural} to add a rule instead. \cite{Prawitz1965} proposed the
classical absurdity rule
\[
  \Axiom$\lnot A, \Gamma \fCenter \bot$
\RightLabel{$\bot_C$}
\UnaryInf$\Gamma \fCenter A$
\DisplayProof
\]
For the sequent calculus, the corresponding rule would replace $\bot$
in the conclusion with an empty succedent:
\[
  \Axiom$\lnot A, \Gamma \fCenter$
\RightLabel{$\bot_C$}
\UnaryInf$\Gamma \fCenter A$
\DisplayProof
\]
Equivalent rules are double negation elimination and rule of excluded
middle:
\[
  \Axiom$A, \Gamma \fCenter C$
  \Axiom$\lnot A, \Gamma \fCenter C$
\RightLabel{$\gem$}
\BinaryInf$\Gamma \fCenter C$
\DisplayProof
\]
These rules, however, do not have the subformula property in at least the
extended sense that $\bot_C$ does, where every formula in the premise
is a subformula of a formula in the conclusion or the negation of one.

Suppose now that something like the negation connective is present in
$X$ and that $\Ls X$ has the usual $\Left\lnot$ and $\Right\lnot$ rules. It
suffices in fact that a connective that behaves like $\lnot$ can be
expressed with the connectives of~$X$ in such a way that the $\Left\lnot$
and $\Right\lnot$ rules can be simulated. E.g., if the Sheffer stroke is
present, the corresponding version of the $\bot_C$ rule would be
\[
\Axiom$A \mid A, \Gamma \fCenter $
\RightLabel{$\bot_C$}
\UnaryInf$\Gamma \fCenter A$
\DisplayProof
\]

\begin{prop}\label{trans-LXs}
If $\L X$ proves $\Gamma \fCenter \Delta$ then $\Ls X + \bot_C$ proves
$\Gamma \fCenter \Delta$ (if $\Delta$ contains at most one formula).
\end{prop}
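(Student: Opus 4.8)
The plan is to prove the statement by translating whole $\L X$ derivations. For a multi-conclusion sequent $\Gamma \fCenter \Delta$ with $\Delta = B_1, \dots, B_m$, write $\lnot\Delta$ for $\lnot B_1, \dots, \lnot B_m$ and let its translation be the empty-succedent sequent $(\Gamma \fCenter \Delta)^\circ = \Gamma, \lnot\Delta \fCenter$. I will show by induction on the height of a given $\L X$ derivation that $\Ls X + \bot_C$ proves $(\Gamma \fCenter \Delta)^\circ$ for every sequent $\Gamma \fCenter \Delta$ occurring in it. The proposition then follows at the root: if $\Delta$ is empty, $(\Gamma \fCenter \Delta)^\circ$ is already $\Gamma \fCenter$; and if $\Delta = B$, a single application of $\bot_C$ turns $\Gamma, \lnot B \fCenter$ into $\Gamma \fCenter B$. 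By the equivalences of \S\ref{sec:splitting} (in particular Corollary~\ref{intuitionistic-rules}) we may assume without loss of generality that the given $\L X$ derivation uses, for each connective, the unrestricted versions of precisely the logical rules of $\Ls X$, i.e.\ the same rules with the succedent restriction lifted, so that each premise still has at most one auxiliary formula on the right.

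First I would dispatch the structural cases. An initial sequent $A \fCenter A$ translates to $A, \lnot A \fCenter$, derivable from the axiom $A \fCenter A$ by $\Left\lnot$ and an exchange. Left weakening, contraction, and exchange translate to themselves, while right weakening, contraction, and exchange translate to the corresponding \emph{left} structural inferences acting on the $\lnot B_i$. For a $\cut$ with premises $\Gamma \fCenter \Delta, A$ and $A, \Pi \fCenter \Lambda$, the induction hypothesis supplies $\Gamma, \lnot\Delta, \lnot A \fCenter$ and $A, \Pi, \lnot\Lambda \fCenter$; applying $\Right\lnot$ to the latter yields $\Pi, \lnot\Lambda \fCenter \lnot A$, and a $\cut$ on $\lnot A$ against the former produces $\Gamma, \Pi, \lnot\Delta, \lnot\Lambda \fCenter$, the translation of the $\cut$ conclusion.

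The heart of the argument is the simulation of the logical rules, where $\bot_C$ does the essential work. Consider a $\Right\opa$ inference with premises $\Pi_i, \Gamma \fCenter \Delta, \Lambda_i$ (each $\Lambda_i$ at most one formula) and conclusion $\Gamma \fCenter \Delta, \opa(\vec A)$. By the induction hypothesis each translated premise $\Pi_i, \Gamma, \lnot\Delta, \lnot\Lambda_i \fCenter$ is available. For those premises in which $\Lambda_i = \{L_i\}$ is a single formula, an application of $\bot_C$ (with principal formula $L_i$) converts $\Pi_i, \Gamma, \lnot\Delta, \lnot L_i \fCenter$ into $\Pi_i, \Gamma, \lnot\Delta \fCenter L_i$; for those with $\Lambda_i$ empty the translated premise $\Pi_i, \Gamma, \lnot\Delta \fCenter$ is already in the required shape. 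These are exactly the premises of the restricted $\Right\opa$ rule of $\Ls X$ in the shared context $\Gamma, \lnot\Delta$, so that rule yields $\Gamma, \lnot\Delta \fCenter \opa(\vec A)$, and a final $\Left\lnot$ gives $\lnot\opa(\vec A), \Gamma, \lnot\Delta \fCenter$, the translation of the conclusion. A $\Left\opa$ inference is handled the same way: the same use of $\bot_C$ on each translated minor premise produces exactly the premises of the restricted $\Left\opa$ rule of $\Ls X$ in context $\Gamma, \lnot\Delta$ with empty succedent, whose conclusion $\opa(\vec A), \Gamma, \lnot\Delta \fCenter$ is the required translation.

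The main obstacle is verifying that every intermediate sequent really meets the succedent restriction of $\Ls X$: each restricted premise produced has at most one formula on the right, and the emptiness conditions on side-succedents are met because the translation pushes all but the single targeted auxiliary formula into the antecedent as negations. The role of $\bot_C$ is precisely to undo one such negation, moving a formula $\lnot L_i$ from the antecedent back to a positive auxiliary occurrence $L_i$ on the right, so that the restricted introduction and elimination rules of $\Ls X$ become applicable. This is where classicality is genuinely used; the structural rules together with $\Left\lnot$ and $\Right\lnot$ suffice for everything else.
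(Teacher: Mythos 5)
Your proof is correct, and it follows the same overall strategy as the paper's: an induction on the height of the $\L X$ derivation that simulates extra succedent formulas by their negations in the antecedent, with $\bot_C$ supplying the classical strength. The difference is the translation invariant. The paper translates $\Gamma \fCenter \Delta, D$ to $\Gamma, \Delta^\lnot \fCenter D$, keeping the rightmost succedent formula in place; you translate $\Gamma \fCenter \Delta$ to $\Gamma, \lnot\Delta \fCenter$ with a uniformly empty succedent and recover $\Gamma \fCenter B$ by one application of $\bot_C$ at the root. Your version buys a cleaner treatment of the structural rules: right weakening, contraction, and exchange all become their left counterparts verbatim, whereas the paper must spend a $\bot_C$ on every $\Right\Contr$ and on every $\Right\Exch$ that touches the distinguished formula, and must track which formula is rightmost. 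The price is that $\bot_C$ migrates into the logical cases: every premise of a $\Right\opa$ or $\Left\opa$ inference that carries a positive auxiliary formula needs a $\bot_C$ to pull $\lnot L_i$ back to the right before the restricted rule applies, whereas in the paper's translation such premises arrive already in the required form (the premises with empty $\Lambda_i$ and nonempty $\Delta$ need only a $\Left\lnot$). Both translations rely, as you correctly note, on having first split the rules so that each premise has at most one auxiliary formula on the right (Corollary~\ref{intuitionistic-rules}), and your verification that the intermediate sequents meet the succedent restrictions of $\Ls X$ is the right thing to check and goes through.
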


\begin{proof}
We define a translation of proofs in $\L X$ to proofs in $\Ls X +
\bot_C$ by induction on the height of a proof in $\L X$. The
end-sequent of a translation of a proof of $\Gamma \fCenter \Delta, D$
is $\Gamma, \Delta^\lnot \fCenter D$, where $\Delta^\lnot$ is $\lnot
A_n, \dots, \lnot A_1$ if $\Delta$ is $A_1, \dots, A_n$. The
translation of a proof of $\Gamma \fCenter {}$ also ends in $\Gamma
\fCenter {}$.

The translation of an initial sequent $A \fCenter A$ is just $A \fCenter A$.

If the proof ends in $\Right\Weak$, either add $\Right\Weak$ if the
succedent of the premise is empty, or $\Left\Weak$ on the negation of
the weakening formula plus $\Left\Exch$.

If the proof ends in $\Right\Exch$ where the exchanged formulas do not include
the rightmost formula, add suitable $\Left\Exch$ inferences to the
translation of the proof of the premise.

A proof ending in $\Right\Contr$ is translated as follows:
\[
\Axiom$\Gamma \fCenter \Delta, D, D$
\RightLabel{$\Right\Contr$}
\UnaryInf$\Gamma \fCenter \Delta, D$
\DisplayProof
\qquad
        \Axiom$\Gamma, \lnot D, \Delta^\lnot \fCenter D$
      \RightLabel{$\Left\lnot$}
      \UnaryInf$\lnot D, \Gamma, \lnot D, \Delta^\lnot \fCenter$
    \doubleLine
    \RightLabel{$\Left\Exch$}
    \UnaryInf$\lnot D, \lnot D, \Gamma, \Delta^\lnot \fCenter $
  \RightLabel{$\Left\Contr$}
  \UnaryInf$\lnot D, \Gamma, \Delta^\lnot \fCenter $
\RightLabel{$\bot_C$}
\UnaryInf$\Gamma, \Delta^\lnot \fCenter D$
\DisplayProof
\]
A proof ending in $\Right\Exch$ in which the rightmost formula is active is
translated as follows:
\[
\Axiom$\Gamma \fCenter \Delta, D, C$
\RightLabel{$\Right\Exch$}
\UnaryInf$\Gamma \fCenter \Delta, C, D$
\DisplayProof
\qquad
      \Axiom$\Gamma, \lnot D, \Delta^\lnot \fCenter C$
    \RightLabel{$\Left\lnot$}
    \UnaryInf$\lnot C, \Gamma, \lnot D, \Delta^\lnot \fCenter $
  \doubleLine
  \RightLabel{$\Left\Exch$}
  \UnaryInf$\lnot D, \Gamma, \lnot C, \Delta^\lnot \fCenter $
\RightLabel{$\bot_C$}
\UnaryInf$\Gamma, \lnot C, \Delta^\lnot \fCenter D$
\DisplayProof
\]
If the proof ends in a cut, in a weakening or contraction on the left,
or in a logical inference, add the corresponding inferences to the
translations of the proofs ending in the premise(s).
\end{proof}

The converse of course also holds, since every application of a rule
of $\Ls X$ is also a correct application of a rule of $\L X$, and
$\bot_C$ can be derived in $\L X$ by
\[
  \Axiom$A \fCenter A$
  \RightLabel{$\Right\lnot$}
  \UnaryInf$\fCenter A, \lnot A$
  \Axiom$\lnot A, \Gamma \fCenter $
\RightLabel{$\cut$}
\BinaryInf$\Gamma \fCenter A$
\DisplayProof
\]

In order to obtain cut elimination results for $\Ls X$, it is
convenient to replace the $\bot_C$ rule with a classical version of
the \cut{} rule:
\[
  \Axiom$\lnot A, \Gamma \fCenter $
  \Axiom$A, \Pi \fCenter \Lambda$
\RightLabel{$\kut$}
\BinaryInf$\Gamma, \Pi \fCenter \Lambda$
\DisplayProof
\]
($\Lambda$ contains at most one formula.)

The $\kut$ rule can simulate the $\bot_C$ rule over $\Ls X$:
\[
  \Axiom$\lnot A, \Gamma \fCenter$
  \RightLabel{$\bot_C$}
  \UnaryInf$\Gamma \fCenter A$
  \DisplayProof
  \qquad
  \Axiom$\lnot A, \Gamma \fCenter$
  \Axiom$A \fCenter A$
  \RightLabel{$\kut$}
  \BinaryInf$\Gamma \fCenter A$
  \DisplayProof
\]
In the reverse direction, $\bot_C$ together with \cut{} can simulate \kut:
\[
\Axiom$\lnot A, \Gamma \fCenter $
\Axiom$A, \Pi \fCenter \Lambda$
\RightLabel{$\kut$}
\BinaryInf$\Gamma, \Pi \fCenter \Lambda$
\DisplayProof
\qquad
  \Axiom$\lnot A, \Gamma \fCenter $
  \RightLabel{$\bot_C$}
  \UnaryInf$\Gamma \fCenter A$ 
  \Axiom$A, \Pi \fCenter \Lambda$
\RightLabel{$\cut$}
\BinaryInf$\Gamma, \Pi \fCenter \Lambda$
\DisplayProof
\]
(Again, $\Lambda$ contains at most one formula.)

Consequently, the previous result establishing structure-preserving
translations of $\L X$ proofs into $\Ls X + \bot_C$ proofs also
transfers to $\Ls X + \kut$ proofs.  As an example, consider the
derivation of excluded middle in $\LJ + \kut$ given by
\[
              \Axiom$A \fCenter A$
              \RightLabel{$\Right\lor_1$}
              \UnaryInf$A \fCenter A \lor \lnot A$
            \RightLabel{$\Left\lnot$}
            \UnaryInf$\lnot(A \lor \lnot A), A \fCenter $
          \RightLabel{$\Left\Exch$}
          \UnaryInf$A, \lnot(A \lor \lnot A) \fCenter $
        \RightLabel{$\Right\lnot$}
        \UnaryInf$\lnot(A \lor \lnot A) \fCenter \lnot A$
      \RightLabel{$\Right\lor_2$}
      \UnaryInf$\lnot(A \lor \lnot A) \fCenter A \lor \lnot A$
    \RightLabel{$\Left\lnot$}
    \UnaryInf$\lnot(A \lor \lnot A), \lnot(A \lor \lnot A) \fCenter $
  \RightLabel{$\Left\Contr$}
  \UnaryInf$\lnot(A \lor \lnot A) \fCenter $
  \Axiom$A \lor \lnot A \fCenter A \lor \lnot A$
\RightLabel{$\kut$}
\BinaryInf$\fCenter A \lor \lnot A$
\DisplayProof
\]

The $\gem$ rule is also derivable using $\kut$:
\[
      \Axiom$\lnot A, \Gamma \fCenter C$
    \RightLabel{$\Left\lnot$}
    \UnaryInf$\lnot C, \lnot A, \Gamma \fCenter $
  \RightLabel{$\Left\Exch$}
  \UnaryInf$\lnot A, \lnot C, \Gamma \fCenter $
  \Axiom$A, \Pi \fCenter C$
\RightLabel{$\kut$}
\BinaryInf$\lnot C, \Gamma, \Pi \fCenter C$
\RightLabel{$\Left\lnot$}
\UnaryInf$\lnot C, \lnot C, \Gamma, \Pi \fCenter$
\RightLabel{$\Left\Contr$}
\UnaryInf$\lnot C, \Gamma, \Pi \fCenter $
  \Axiom$C \fCenter C$
\RightLabel{$\kut$}
\BinaryInf$\Gamma, \Pi \fCenter C$
\DisplayProof
\]

The restricted system $\Ls X$ enjoys cut elimination in precisely the
same way $\LJ$ does.  However, for the corresponding classical systems
$\Ls X + \bot_C$ and $\Ls X + \kut$ the situation is more complicated.
Clearly, $\bot_C$ and $\kut$, considered as variant cut rules, cannot
be eliminated from proofs in $\Ls X$. We might hope, however, that
$\cut$ can be eliminated from proofs in $\Ls X + \bot_C$ and $\Ls X +
\kut$, and that we can ``control'' the applications of $\bot_C$ and
$\kut$, e.g., to atomic~$A$. This was shown by \cite{NegrivonPlato2001}
to hold for their related system $\mathbf{G3ip} + \gem$, via the
Sch\"utte-Tait method of cut elimination.

Like $\cut$, neither $\bot_C$ nor $\kut$ permute across $\Right\Contr$.  In
order to study Gentzen's cut-elimination procedure for $\Ls X$ we
should thus consider their analogs to Gentzen's $\mix$ rule which
allows the removal of any number of occurrences of the cut
formula~$A$, not just the outermost ones.  Let us call these rules
$\bot_C^*$ and $\kix$.

It is, however, possible to obtain restricted cut elimination results
for $\kut$.

\begin{prop}
$\kix$ permutes with $\mix$.
\end{prop}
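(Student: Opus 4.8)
The plan is to argue by a local case analysis on the way the two inferences abut. Since the aim of the surrounding cut-elimination argument is to drive a $\mix$ upward, I consider a derivation in which a $\kix$ inference stands immediately above a $\mix$ inference, i.e.\ the conclusion of the $\kix$ is one of the premises of the $\mix$, and I show how to rewrite the fragment so that the $\mix$ acts on the premises of the $\kix$ and the $\kix$ is applied afterwards. The two rules act on complementary sides: a $\mix$ on $A$ deletes every $A$ from the succedent of its left premise and from the antecedent of its right premise, whereas a $\kix$ on $B$ deletes every $\lnot B$ from the antecedent of its (empty-succedent) left premise and every $B$ from the antecedent of its right premise. The key observation, which makes the permutation possible at all, is that $\kix$ only deletes and merges: every formula in its conclusion, in particular the $\mix$ formula $A$, is inherited from a surviving occurrence in one of its two premises, and that occurrence is distinct from the $B$ or $\lnot B$ that $\kix$ itself removes.

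First I would treat the case in which the conclusion of the $\kix$ is the \emph{right} premise of the $\mix$. Here the $\kix$ derives its conclusion from $\Sigma \fCenter {}$ (with $\lnot B \in \Sigma$) and $\Phi \fCenter \Lambda$ (with $B \in \Phi$), and the $\mix$ on $A$ has left premise $\Gamma \fCenter A$. The targeted occurrence of $A$ survives in $\Sigma$, in $\Phi$, or in both. For each such premise I apply $\mix$ on $A$ with $\Gamma \fCenter A$ on the left: mixing against $\Sigma \fCenter {}$ gives a sequent with empty succedent that still contains $\lnot B$ (because an $A$ surviving the $\kix$ on the left cannot be $\lnot B$), and mixing against $\Phi \fCenter \Lambda$ gives a sequent that still contains $B$ (because an $A$ surviving on the right cannot be $B$). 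Reapplying $\kix$ on $B$ to these premises yields the original end-sequent. When $A$ survived in both premises the context $\Gamma$ is duplicated, and I absorb the extra copy with $\Left\Contr$ and $\Left\Exch$; when it survived in only one premise no duplication occurs. In every subcase each succedent still carries at most one formula, so the rewritten inferences respect the restriction of $\Ls X$.

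The case in which the conclusion of the $\kix$ is the \emph{left} premise of the $\mix$ is simpler, since then the $\mix$ formula $A$ lies in the succedent and so is the single succedent formula $\Lambda = A$ inherited from the right premise $\Phi \fCenter A$ of the $\kix$. I apply $\mix$ on $A$ to $\Phi \fCenter A$ and to the right premise of the original $\mix$, and then reapply $\kix$ on $B$ with the unchanged left premise $\Sigma \fCenter {}$; here nothing is duplicated, so only exchanges are required. The symmetric situations, in which a $\mix$ stands above a $\kix$, are handled in exactly the same spirit and need not be written out separately.

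The one place needing genuine care, and hence the main obstacle, is the bookkeeping around the deleted occurrences and the duplicated contexts: I must confirm that the occurrences of $B$ and $\lnot B$ required by the reapplied $\kix$ really do survive the intermediate $\mix$es, and that the copies of $\Gamma$ created when $A$ is cut against both premises of the $\kix$ can be removed. The first point follows from the observation above that the cut formula $A$ is disjoint from the occurrences $\kix$ deletes, and the second is precisely the role of contraction noted earlier in the paper; since $\Ls X$ keeps its structural rules explicit, the contractions and exchanges needed to recover the original end-sequent are always available, and the permutation goes through.
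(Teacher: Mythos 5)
Your proposal is correct and follows essentially the same route as the paper: a local case split on whether the $\kix$ conclusion is the left or right premise of the $\mix$, pushing the $\mix$ up to whichever premise(s) of the $\kix$ the mix formula survives in and then reapplying $\kix$, with the same key observation (stated in the paper as the remark that $C = \lnot A$ is impossible) that a surviving mix formula cannot be one of the occurrences the $\kix$ deletes. You are in fact slightly more thorough than the paper, which restricts to representative $\kut$/$\cut$ instances ``for simplicity'' and so omits the subcase where the mix formula occurs in both premises of the $\kix$ and the duplicated context must be contracted away.
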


\begin{proof}
We give the derivations for cases where the rules are also
applications of $\kut$ and $\cut$ for simplicity.
\[
    \Axiom$\lnot A, \Gamma \fCenter $
    \Axiom$A, \Pi \fCenter C$
  \RightLabel{$\kix$}
  \BinaryInf$\Gamma, \Pi \fCenter C$
  \Axiom$\Pi' \fCenter \Lambda'$
\RightLabel{$\mix$}
\BinaryInf$\Gamma, \Pi, \Pi' \fCenter \Lambda'$
\DisplayProof
\qquad
  \Axiom$\lnot A, \Gamma \fCenter $
    \Axiom$A, \Pi \fCenter C $
    \Axiom$C, \Pi' \fCenter \Lambda'$
  \RightLabel{$\mix$}
  \BinaryInf$A, \Pi, \Pi' \fCenter \Lambda'$
\RightLabel{$\kix$}
\BinaryInf$\Gamma, \Pi, \Pi' \fCenter \Lambda'$
\DisplayProof
\]
\[
  \Axiom$\Gamma \fCenter C $
    \Axiom$\lnot A, C, \Pi \fCenter$
    \Axiom$A, \Pi' \fCenter \Lambda'$
  \RightLabel{$\kix$}
  \BinaryInf$C, \Pi, \Pi' \fCenter \Lambda'$
\RightLabel{$\mix$}
\BinaryInf$\Gamma, \Pi, \Pi' \fCenter \Lambda'$
\DisplayProof
\qquad
    \Axiom$\Gamma \fCenter C$ 
    \Axiom$\lnot A, C, \Pi \fCenter$
    \doubleLine
    \UnaryInf$C, \lnot A, \Gamma \fCenter $ 
  \RightLabel{$\mix$}
  \BinaryInf$\lnot A, \Gamma, \Pi \fCenter $
  \Axiom$A, \Pi' \fCenter  \Lambda'$
\RightLabel{$\kix$}
\BinaryInf$\Gamma, \Pi, \Pi' \fCenter \Lambda'$
\DisplayProof
\]
Note that if $C$ is $\lnot A$ in the second case, the starting
derivation is impossible, since the conclusion of the \mix{} then does
not contain $C$ on the left.
\end{proof}

By contrast, $\bot_C^*$ (and thus also $\bot_C$) does not permute with
$\mix$. Consider the following case:
\[
  \Axiom$\lnot A, \Gamma \fCenter $
  \RightLabel{$\bot_C^*$}
  \UnaryInf$\Gamma \fCenter A$
  \Axiom$A, \Pi \fCenter C$
\RightLabel{$\mix$}
\BinaryInf$\Gamma, \Pi \fCenter C$
\DisplayProof
\]
Since the cut formula~$A$ does not appear in the succedent of the
premise of the $\bot_C^*$ rule, we cannot apply the $\mix$ rule to it.
If we first use $\Right\Weak$ to introduce the cut formula $A$ in the
succedent, the conclusion sequent still contains $\lnot A$ on the
left, but application of $\bot_C^*$ is blocked by the presence of $C$
in the succedent.  It would be possible to derive from $A, \Pi \fCenter
C$ the sequent $\lnot C, \Pi \fCenter \lnot A$ (using $\Left\lnot$ and
$\Right\lnot$) and then apply a \mix{} on $\lnot A$, but this would
increase the degree of the \mix{} formula and (because of the
additional inferences required) would not be guaranteed to decrease
the rank of the \mix.

One last strategy to avoid this difficulty would be to show that we
can transform the proof of the premise of $\bot_C^*$ into one of its
conclusion that avoids the $\bot_C^*$ inference and does not increase
the height of that subproof. Then the application of $\mix$ would
apply to two subproofs of lower height than the original (since the
$\bot_C^*$ rule would be removed) and so the induction hypothesis
would apply.  However, the conclusion $\bot_C^*$ rule is in general
not provable without $\bot_C^*$ \emph{at all}. Consider the case
$\lnot(A \lor \lnot A) \fCenter {}$, which is derivable without
$\bot_C$. However, the corresponding conclusion of $\bot_C$, $\fCenter A
\lor \lnot A$, is not so derivable.

\begin{prop}
$\kix$ can be replaced by $\mix$ if the cut formula~$\lnot A$ is
  principal in the left premise.
\end{prop}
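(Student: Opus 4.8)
The plan is to read off the shape of the left premise from the fact that $\lnot A$ is its principal formula, and then simply replace the $\kix$ on $\lnot A$ by an ordinary $\mix$ on the immediate subformula $A$. Since $\lnot A$ is principal in the left premise, that premise $\lnot A, \Gamma \fCenter {}$ must be the conclusion of a $\Left\lnot$ inference, whose single premise is $\Gamma \fCenter A$. I would treat the genuinely principal case, in which the displayed $\lnot A$ is the only occurrence in the left antecedent; this is precisely the configuration one reaches after driving the left rank down to~$1$ using the permutation result above, and it is the only configuration in which a degree-reducing step is called for. The relevant part of the derivation then has the form
\[
\Axiom$\Gamma \fCenter A$
\RightLabel{$\Left\lnot$}
\UnaryInf$\lnot A, \Gamma \fCenter $
\Axiom$\Pi \fCenter \Lambda$
\RightLabel{$\kix: A$}
\BinaryInf$\Gamma, \Pi^* \fCenter \Lambda$
\DisplayProof
\]
where $\Pi^*$ is $\Pi$ with every occurrence of~$A$ deleted.

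Next I would discard the $\Left\lnot$ inference and apply a $\mix$ on~$A$ directly to its premise $\Gamma \fCenter A$ and to the right premise $\Pi \fCenter \Lambda$:
\[
\Axiom$\Gamma \fCenter A$
\Axiom$\Pi \fCenter \Lambda$
\RightLabel{$\mix: A$}
\BinaryInf$\Gamma, \Pi^* \fCenter \Lambda$
\DisplayProof
\]
The lone occurrence of the $\mix$ formula in the succedent of the left premise is deleted, leaving no succedent contribution, and the occurrences of~$A$ in $\Pi$ are deleted to give $\Pi^*$; hence the end-sequent is again $\Gamma, \Pi^* \fCenter \Lambda$, identical to that of the original $\kix$. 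Because $\Lambda$ contains at most one formula, this is a legitimate $\mix$ of $\Ls X$. The essential gain is that the $\mix$ formula~$A$ has degree one less than the $\kix$ formula~$\lnot A$, so the transformation strictly lowers the degree; the residual $\mix$ on~$A$ can then be removed by the induction hypothesis of the surrounding cut-elimination argument (equivalently, by the $\Ls X$ cut-elimination machinery), since it is an ordinary $\mix$ rather than a $\kix$.

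The one place demanding care---and what I expect to be the main obstacle---is the multiplicity of $\lnot A$ on the left. A $\kix$ deletes \emph{all} occurrences of $\lnot A$ from the left antecedent, whereas the new $\mix$ on~$A$ leaves untouched any $\lnot A$ that already occurred in~$\Gamma$ above the $\Left\lnot$ inference. In the rank-$1$ situation guaranteed by ``$\lnot A$ principal in the left premise,'' the premise $\Gamma \fCenter A$ of $\Left\lnot$ carries no further $\lnot A$, so the two end-sequents coincide exactly and the replacement is immediate. Were one to permit residual copies of $\lnot A$ in~$\Gamma$, those copies would survive the $\mix$ and would have to be re-removed by a further $\kix$ of the \emph{same} degree, defeating the degree reduction; this is exactly why the lemma is stated for the principal (rank-$1$) left premise. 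Apart from this bookkeeping over occurrences, the argument is forced by the shape of $\Left\lnot$ and needs no further case analysis.
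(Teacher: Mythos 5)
Your proposal is correct and follows essentially the same route as the paper: read off from principality that the left premise $\lnot A, \Gamma \fCenter {}$ is the conclusion of a $\Left\lnot$ inference with premise $\Gamma \fCenter A$, then replace the $\kix$ by a $\mix$ on~$A$ applied directly to that premise and the right premise, yielding the same end-sequent with the degree reduced. Your additional remarks on the multiplicity of $\lnot A$ in the antecedent are a sensible piece of bookkeeping that the paper leaves implicit, but they do not change the argument.
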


\begin{proof}
If $\lnot A$ is principal, it was introduced by a $\Left\lnot$ rule. Then
we can replace
\begin{endproofeqnarray*}
  \bottomAlignProof
  \Axiom$\Gamma \fCenter A$
  \RightLabel{$\Left\lnot$}
  \UnaryInf$\lnot A, \Gamma \fCenter$ 
  \Axiom$A, \Pi \fCenter \Lambda$
\RightLabel{$\kix$}
\BinaryInf$\Gamma, \Pi \fCenter \Lambda$
\DisplayProof
 &\quad\text{by}\quad
 &
 \bottomAlignProof
\Axiom$\Gamma \fCenter A$
\Axiom$A, \Pi \fCenter \Lambda$
\RightLabel{$\mix$}
\BinaryInf$\Gamma, \Pi \fCenter \Lambda$
\DisplayProof
\end{endproofeqnarray*}
\end{proof}

\section{Single-conclusion natural deduction}
\label{sec:scnd}

We have seen that the multi-conclusion ``sequent style'' natural
deduction systems $\Nmsl X$ always normalize.  If we restrict the
succedent of sequents in derivations to at most one formula, we
obtain a single-conclusion ``sequent style'' natural deduction
system~$\Ns X$.  This will be an ``intuitionistic'' version of $\Nmsl
X$.  For the standard set of logical operators, this system is the
standard intuitionistic natural deduction system in sequent style,
with one slight difference. In the standard systems, the succedent of
sequents always contains exactly one formula; an empty succedent is
represented by the contradiction symbol~$\bot$. So, if we are content
to add the contradiction constant $\bot$ to the language, we can obtain
the standard systems simply by marking every empty succedent in the
rules of $\Ns X$ with~$\bot$.

An example rule of an intuitionistic sequent-style natural deduction
rule would be
\[
  \Axiom$\Gamma_0 \fCenter A \lif B$
  \Axiom$\Gamma_1 \fCenter A$
  \Axiom$B, \Gamma_2 \fCenter \Delta_2$
\RightLabel{$\Elim\lif$}
\TrinaryInf$\Gamma_0, \Gamma_1, \Gamma_2 \fCenter  \Delta_2$
\DisplayProof
\]

\begin{table}
  \caption{Single-conclusion natural deduction rules for some unusual connectives}
\[
\begin{array}{ccc}
\hline\hline
\text{connective} & \text{rules}\\ \hline\\ 
A \nif B & 
  \Axiom$\Gamma_1 \fCenter A$
  \Axiom$B, \Gamma_2 \fCenter$
\RightLabel{$\Intro\nif$}
\BinaryInf$\Gamma_1, \Gamma_2 \fCenter A \nif B$
\DisplayProof
\\[2ex]
\text{(exclusion)} 
& 
  \Axiom$\Gamma_0 \fCenter A \nif B$
  \Axiom$\Gamma_1 \fCenter B$
\RightLabel{$\Elim\nif_1$}
\BinaryInf$\Gamma_0, \Gamma_1 \fCenter $
\DisplayProof
& 
  \Axiom$\Gamma_0 \fCenter A \nif B$
  \Axiom$A, \Gamma_1 \fCenter D$
\RightLabel{$\Elim\nif_2$}
\BinaryInf$\Gamma_0, \Gamma_1 \fCenter D$
\DisplayProof
 \\[2ex] 
A \mid B \text{ (nand)} & 
  \Axiom$A, B, \Gamma \fCenter $
  \RightLabel{$\Intro\mid$}
  \UnaryInf$\Gamma \fCenter A \mid B$
\DisplayProof
 & 
  \Axiom$\Gamma_0 \fCenter A \mid B$
  \Axiom$\Gamma_1 \fCenter A$
  \Axiom$\Gamma_2 \fCenter B$
\RightLabel{$\Elim\mid$}
\TrinaryInf$\Gamma_1, \Gamma_2, \Gamma_3 \fCenter $
\DisplayProof
 \\[2ex]
A \nor B \text{ (nor)}
&  
    \Axiom$A, \Gamma_1 \fCenter \bot$
    \Axiom$B, \Gamma_2 \fCenter$
\RightLabel{$\Intro\nor$}
\BinaryInf$\Gamma_1, \Gamma_2 \fCenter A \nor B$
\DisplayProof 
\\[2ex] 
& 
  \Axiom$\Gamma_0 \fCenter A \nor B$
  \Axiom$\Gamma_1 \fCenter A$
\RightLabel{$\Elim\nor_1$}
\BinaryInf$\Gamma_0, \Gamma_1 \fCenter $
\DisplayProof
& 
  \Axiom$\Gamma_0 \fCenter A \nor B$
  \Axiom$\Gamma_1 \fCenter B$
\RightLabel{$\Elim\nor_2$}
\BinaryInf$\Gamma_0, \Gamma_1 \fCenter $
\DisplayProof
\\[2ex]
A \xor B & 
  \Axiom$\Gamma_1 \fCenter A$
  \Axiom$A, B, \Gamma_2 \fCenter$
\RightLabel{$\Intro\xor_1$}
\BinaryInf$\Gamma_1, \Gamma_2 \fCenter A \xor B$
\DisplayProof
 & 
  \Axiom$\Gamma_1 \fCenter B$
  \Axiom$A, B, \Gamma_2 \fCenter $
\RightLabel{$\Intro\xor_2$}
\BinaryInf$\Gamma_1, \Gamma_2 \fCenter A \xor B$
\DisplayProof 
\\[2ex]
\text{(xor)} 
& 
\multicolumn{2}{c}{
    \Axiom$\Gamma_0 \fCenter A \xor B$
    \Axiom$A, \Gamma_1 \fCenter D$
    \Axiom$B, \Gamma_2 \fCenter D$
  \RightLabel{$\Elim\xor_1$}
  \TrinaryInf$\Gamma_0, \Gamma_1, \Gamma_2 \fCenter D$
  \DisplayProof}
\\[2ex] 
& \multicolumn{2}{c}{
   \Axiom$\Gamma_0 \fCenter A \xor B$
   \Axiom$\Gamma_1 \fCenter B$
   \Axiom$\Gamma_2 \fCenter A$  \RightLabel{$\Elim\xor_2$}
  \TrinaryInf$\Gamma_0, \Gamma_1, \Gamma_2 \fCenter $
\DisplayProof}
\\[2ex]
A \lif B / C & 
    \Axiom$A, \Gamma_1 \fCenter B$
    \Axiom$\Gamma_2 \fCenter A$
  \RightLabel{$\Intro\ite_1$}
  \BinaryInf$\Gamma_1, \Gamma_2 \fCenter A \lif B/C$
\DisplayProof
& 
  \Axiom$A, \Gamma_1 \fCenter B$
  \Axiom$\Gamma_2 \fCenter C$
\RightLabel{$\Intro\ite_2$}
\BinaryInf$\Gamma_1, \Gamma_2 \fCenter A \lif B/C$
\DisplayProof
\\[2ex]
\text{(if then else)} & 
  \multicolumn{2}{c}{
      \Axiom$\Gamma_0 \fCenter A \lif B/C$
      \Axiom$A, B, \Gamma_2 \fCenter D$
      \Axiom$\Gamma_2 \fCenter A$
    \RightLabel{$\Elim\ite_1$}
    \TrinaryInf$\Gamma_0, \Gamma_1, \Gamma_2 \fCenter D$
\DisplayProof} 
\\[2ex]
& \multicolumn{2}{c}{
    \Axiom$\Gamma_0 \fCenter A \lif B/C$
    \Axiom$A, B, \Gamma_1 \fCenter D$
    \Axiom$C, \Gamma_2 \fCenter D$
  \RightLabel{$\Elim\ite_2$}
  \TrinaryInf$\Gamma_0, \Gamma_1, \Gamma_2 \fCenter D$
\DisplayProof}\\[2ex]
  \hline\hline
\end{array}
\]
\end{table}

\begin{prop}
$\Ns X$ normalizes.
\end{prop}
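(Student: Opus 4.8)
The plan is to obtain the result by specializing the normalization procedure of Theorem~\ref{thm:normalization} to single-conclusion derivations. Every $\Ns X$ derivation is an $\Nmsl X$ derivation all of whose sequents carry at most one formula in the succedent, and the definitions of maximal segment, its degree and length, and the induction on the pair $(m(\delta), i(\delta))$ transfer verbatim. It therefore suffices to run the transformations of Theorem~\ref{thm:normalization} and to check that, started from a single-conclusion derivation, they never produce a sequent with two formulas on the right. The restriction to at most one succedent formula (which we assume holds of the logical rules, as in Section~\ref{sec:sclk}) buys several simplifications: the premise $\Gamma \fCenter \Delta, D, D$ of a $\Right\Contr$ inference is not a legal $\Ns X$ sequent, so $\Right\Contr$ does not occur; $\Right\Weak$ can only add a formula to an empty succedent; and since at each sequent of a segment the tracked formula $\opa(\vec A)$ is the \emph{unique} succedent formula, distinct segments can never overlap and each $\Elim\opa$ inference terminates at most one segment.

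First I would observe that these simplifications make cases (1)--(4) of the length-reduction step vacuous: a $\Right\Weak$ inference in which $\opa(\vec A)$ is a side formula while another formula is weakened in (case~1), the two $\Right\Contr$ cases (2) and (3), and an $\Intro\opb$ inference carrying $\opa(\vec A)$ as a side formula on the right alongside its principal formula $\opb(\vec C)$ (case~4) would each require two formulas in some succedent, and so cannot arise. Hence the only surviving length-reduction step is case~(5): permuting the segment-ending $\Elim\opa$ inference upward across an $\Elim\opb$ inference, exactly the elimination-permutation of Prawitz's proof. That this permutation preserves the single-conclusion restriction is checked by the same reasoning as in the cut-elimination proof for $\Ls X$, where the restricted form of the rules forces the side succedent and the right auxiliary formulas of any premise carrying $\opa(\vec A)$ to be empty, so that the reassembled inferences again meet the restriction.

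The crucial step, and the one place requiring real argument, is the removal of a segment of length~$1$, i.e.\ an $\Intro\opa$ immediately feeding the major premise of $\Elim\opa$ (the $\Right\Weak$-headed length-$1$ case being disposed of by deleting the weakening and reintroducing at most one succedent formula). As in Theorem~\ref{thm:normalization} the detour is replaced by the derivation obtained from a resolution refutation of the unsatisfiable set of Kowalski clauses formed from the premises of $\Intro\opa$ and the minor premises of $\Elim\opa$, with context formulas reinserted, missing minor premises handled by the simplification conversion, and the refutation pruned for undischarged assumptions. The main obstacle is that this replacement must remain single-conclusion; this is settled exactly as for $\Ls X$: since the rules are restricted, every clause in the set is Horn, resolution on Horn clauses yields only Horn clauses, and hence none of the cuts in the reassembled derivation ever accumulates more than one formula on the right. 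Each transformation strictly lowers $(m(\delta), i(\delta))$ in the lexicographic order while preserving the restriction, so the procedure terminates in a normal $\Ns X$ derivation.
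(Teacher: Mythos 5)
Your proposal is correct and follows essentially the same route as the paper: it normalizes by inspecting the proof of Theorem~\ref{thm:normalization}, discarding the permutation cases made vacuous by the single-conclusion restriction (no $\Right\Contr$, no $\Intro\opb$ above a major premise, segments passing only through minor premises of $\Elim\opb$), and using the fact that the restricted rules yield Horn clauses so that the resolution-based detour conversion never produces a sequent with more than one succedent formula. The only difference is one of explicitness --- you spell out why each of cases (1)--(4) cannot arise, where the paper merely asserts the case analysis shrinks --- which is a matter of presentation, not substance.
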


\begin{proof}
By inspection of the proof of Theorem~\ref{thm:normalization}. The
definition of maximal segments is now simpler; the same definition as
in \cite{Prawitz1965} applies. The cases required for the reduction of
the length of segments are now fewer in number. There is no
$\Right\Contr$ rule. The major premise of the $\Elim\opa$ rule cannot
be the conclusion of an $\Intro\opb$ rule. If it is the conclusion of
an $\opb E$ rule, the segment cannot run through the major premise.

Because the premises of the logical rules are restricted, the resolution
refutation of the clauses corresponding to the premises of the $\opa
I$ and the minor premises of the $\Elim\opa$ rule proceeds via Horn
clauses. These Horn clauses plus any context formulas result in a
derivation of the conclusion of the $\opa(\vec A)$ rule from the
premises of the of the $\Intro\opa$ and the minor premises of the $\Elim\opa$
rule.
\end{proof}

Like in the case of the sequent calculus, where we obtained a
single-conclusion system equivalent to $\L X$ by adding the $\bot_C$
rule to $\Ls X$, it is possible to obtain a classical
single-conclusion sequent-style natural deduction system.  In order to
be able to translate derivations in $\Ls X + \bot_C$ to a
single-conclusion natural deduction derivations, we have to add the
natural-deduction version of $\bot_C$ (or equivalent rules) to $\Ns
X$, such as:
\[
  \Axiom$\lnot A, \Gamma \fCenter $
\RightLabel{$\bot_C$}
\UnaryInf$\Gamma \fCenter A$
\DisplayProof
\qquad
\Axiom$\lnot A, \Gamma \fCenter C$
\Axiom$A, \Gamma \fCenter C$
\RightLabel{$\LEM$}
\BinaryInf$\Gamma, \Pi \fCenter C$
\DisplayProof
\qquad
  \Axiom$\lnot A, \Gamma \fCenter $
  \Axiom$A, \Pi \fCenter C$
\RightLabel{$\kut$}
\BinaryInf$\Gamma, \Pi \fCenter C$
\DisplayProof
\]
With $\LEM$ we can derive $\kut$, with $\kut$ we can derive $\bot_C$, and
with $\bot_C + \cut$ we can derive $\LEM$.

The proof of normalization for classical single-conclusion natural
deduction including the rule $\bot_C$ is more complicated than both the
pure intuitionist case and the multi-conclusion case. This is due
to the fact that the notion of maximal segment must be extended to include
sequences of formulas beginning with the conclusion of $\bot_C$ and
ending with a major premise of an elimination rule. Such segments are
harder to remove. Although \cite{Prawitz1965} already showed
normalization for the fragment excluding $\lor$, normalization of the
full system was not established until \cite{Stalmarck1991}.

\begin{conj}
  $\Ns X + \bot_C$ normalizes.
\end{conj}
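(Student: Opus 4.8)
The plan is to adapt the normalization argument of Theorem~\ref{thm:normalization} to the classical system $\Ns X + \bot_C$ by extending the notion of maximal segment to the \emph{classical segments} created by $\bot_C$, following the strategy of \cite{Stalmarck1991}. Because the succedent now contains at most one formula there is no $\Right\Contr$ rule and segments neither branch nor overlap, exactly as in the purely intuitionistic case of $\Ns X$; the only novelty is a second species of segment. A classical segment begins with the conclusion $\Gamma \fCenter \opa(\vec A)$ of a $\bot_C$ inference, follows the unique succedent formula downward through the side-formula positions of the inferences below it, and ends when that occurrence becomes the major premise of an $\Elim\opa$ inference. A derivation is normal when it has neither ordinary maximal segments nor classical segments. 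As in Theorem~\ref{thm:normalization} one first permutes an $\Elim\opa$ inference upward to bring a chosen segment down to length~$1$; these permutations do not distinguish whether the segment originates in $\Intro\opa$ or in $\bot_C$, so the relevant cases transfer essentially verbatim.

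The genuinely new step is the reduction of a classical segment of length~$1$: a $\bot_C$ inference whose conclusion $\Gamma_0 \fCenter \opa(\vec A)$ is the major premise of an $\Elim\opa$ inference with minor premises $\pi_i$ and conclusion $D$. Here I would apply the reductio permutation. Let $\sigma$ be the subderivation of the empty succedent from $\lnot\opa(\vec A), \Gamma_0$ that feeds the $\bot_C$ inference. First build an auxiliary derivation $\rho$ of $\lnot\opa(\vec A)$ from the minors $\pi_i$ and a fresh assumption $\lnot D$: apply $\Elim\opa$ to the minors with an assumption $x\colon\opa(\vec A)$ as major premise to obtain $D$, contradict $\lnot D$ by $\Left\lnot$, and discharge $x$ by $\Right\lnot$. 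Substituting $\rho$ for the assumption $\lnot\opa(\vec A)$ in $\sigma$---admissible by the substitution operation of \S\ref{sec:cutel}, which applies equally to $\Ns X$ and extends routinely to the $\bot_C$ rule---yields a derivation of the empty succedent from $\lnot D$, the minors, and $\Gamma_0$; a concluding $\bot_C$ discharging $\lnot D$ then delivers $D$. If instead the major premise of the $\Elim\opa$ is the conclusion of an $\Intro\opa$ (an ordinary length-$1$ detour), the resolution-based conversion already used in Theorem~\ref{thm:normalization} removes it.

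The hard part will be termination. The reductio permutation replaces a $\bot_C$ on $\opa(\vec A)$ by one on $D$, whose degree is unrelated to that of $\opa(\vec A)$, and the substitution of $\rho$ into $\sigma$ copies the minor-premise subderivations $\pi_i$ once for every occurrence of the discharged assumption $\lnot\opa(\vec A)$ in $\sigma$; hence neither $m(\delta)$ nor $i(\delta)$ need decrease under a naive adaptation of the measure of Theorem~\ref{thm:normalization}. The plan is to import the more delicate, lexicographically ordered measure of \cite{Stalmarck1991}, which weighs each unremoved classical segment by the complexity of the material it dominates, and to arrange the reductions so that one always treats a \emph{topmost} classical segment of maximal degree. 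For such a segment the copied subderivations $\pi_i$ contain no segment of that degree, so their duplication cannot raise the critical component of the measure---this is the single-conclusion analogue of clause~(b) in the proof of Theorem~\ref{thm:normalization}. The principal obstacle, and the reason the statement is left as a conjecture, is to verify that Stålmarck's measure, tailored to the fixed rules of $\NK$, still decreases when $\Elim\opa$ may carry arbitrarily many minor premises with arbitrary discharge patterns, and when the formula $D$ produced by the permutation can be an arbitrary formula of the language.
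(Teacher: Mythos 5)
The statement you are addressing is left as a \emph{conjecture} in the paper: no proof is given, only a remark that the difficulty lies in extending the notion of maximal segment to sequences beginning with the conclusion of $\bot_C$ and ending at the major premise of an elimination, together with a pointer to St\aa lmarck's normalization proof for full classical $\NK$. Your sketch is essentially a faithful elaboration of that remark. The reductio permutation you describe---replacing the assumption $\lnot\opa(\vec A)$ in the subderivation above $\bot_C$ by a derivation of $\lnot\opa(\vec A)$ from $\lnot D$ built by applying $\Elim\opa$ to the minors under an assumed major premise, then concluding $D$ by a new $\bot_C$ discharging $\lnot D$---is the standard classical reduction, and it generalizes correctly to arbitrary $\Elim\opa$ rules since the construction only uses the shape of the general elimination rule, not the particular connective. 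You have also put your finger on exactly the right obstruction: the new $\bot_C$ is on $D$, whose degree is unrelated to that of $\opa(\vec A)$, and the minors are duplicated, so the measure $(m(\delta), i(\delta))$ of Theorem~\ref{thm:normalization} does not obviously decrease; one needs a St\aa lmarck-style measure, and verifying that it still works when elimination rules have arbitrarily many minor premises and discharge patterns is precisely what remains open. So your proposal does not close the gap---but neither does the paper, and you have correctly located where the missing work lies rather than papering over it. One small caution: your construction of $\rho$ presupposes that $\lnot$ (or a definable surrogate with working $\Intro\lnot$/$\Elim\lnot$ behaviour) is available in $X$ and that $\bot_C$ is formulated via that negation, an assumption the paper also makes in \S\ref{sec:classical} but which should be stated explicitly as a hypothesis of any eventual theorem.
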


\section{Proof terms and formulas-as-types}
\label{sec:ch}

Under the Curry-Howard isomorphism, proofs in $\Nsl X$ can be assigned
\emph{proof terms}, just like they can in the standard intuitionistic
cases.  Under this isomorphism, conversions of inference segments used
to reduce or permute inferences correspond to operations on proof
terms.  The general case considered here suggests that it might be
fruitful to separate the proof operation of substitution, which
corresponds to the cut rule, from the conversions themselves.

For each connective $\opa$, we may introduce two function symbols, one
that applies to premises of an introduction rule (a proof
constructor), and one to the premises of an elimination rule (a proof
destructor).  The types of these functions are given by the
corresponding auxiliary formulas; and their conversion clauses by the
corresponding derivation simplifications obtained from resolution
refutations.  Labels of discharged assumptions are abstracted similar
to $\lambda$-abstraction. We begin by considering some familiar examples.

Conjunction can be given a single introduction rule in Horn form,
\[
  \Axiom$\Gamma_1 \fCenter A$
  \Axiom$\Gamma_2 \fCenter B$
\RightLabel{$\Intro\land$}
\BinaryInf$\Gamma_1, \Gamma_2 \fCenter A \land B$
\DisplayProof
\]
The constructor corresponding to $\Intro\land$ is the pairing function,
$c^\land$. It takes two arguments, one for each premise. If the
premises end in derivations labelled by $s$ and $t$, respectively, we
label the derivation ending in the conclusion by $c^\land(s, t)$. We
may include this in the statement of the rule itself, by adding a type
annotation on the right:
\[
\Axiom$\Gamma_1 \fCenter s \colon A$
\Axiom$\Gamma_2 \fCenter t \colon B$
\RightLabel{$\Intro\land$}
\BinaryInf$\Gamma_1, \Gamma_2 \fCenter c^\land(s, t) \colon A \land
  B$
\DisplayProof
\]
Split general elimination rules are
\[
  \Axiom$\Gamma_0 \fCenter A \land B$
  \Axiom$A, \Gamma_1 \fCenter \Lambda$
\RightLabel{$\Elim\land_1$}
\BinaryInf$\Gamma_0, \Gamma_1 \fCenter \Lambda$
\DisplayProof
\qquad
  \Axiom$\Gamma_0 \fCenter A \land B$
  \Axiom$B, \Gamma_1 \fCenter \Lambda$
\RightLabel{$\Elim\land_2$}
\BinaryInf$\Gamma_0, \Gamma_1 \fCenter \Lambda$
\DisplayProof
\]
They correspond to the two destructors $d_1^\land$, $d_2^\land$, and
the rules may be written:
\[
  \Axiom$\Gamma_0 \fCenter t \colon A \land B$
  \Axiom$x \colon A, \Gamma_1 \fCenter s_1 \colon C$
\RightLabel{$\Elim\land_1$}
\BinaryInf$\Gamma_0, \Gamma_1 \fCenter d_1^\land(t, [x]s_1) \colon C$
\DisplayProof
\qquad
  \Axiom$\Gamma_0 \fCenter t \colon A \land B$
  \Axiom$x \colon B, \Gamma_1 \fCenter s_2 \colon C$
\RightLabel{$\Elim\land_2$}
\BinaryInf$\Gamma_0, \Gamma_1 \fCenter d_2^\land(t, [x]s_2) \colon C$
\DisplayProof
\]
Such a destructor may be seen as a generalized $\lambda$ abstraction
operator; the $x$ in the second argument is bound.

A \cut{} inference is treated using a substitution operator,
\[
  \Axiom$\Gamma \fCenter s \colon A$
  \Axiom$x \colon A, \Pi \fCenter t \colon C$ 
\RightLabel{$\cut \colon x$}
\BinaryInf$\Gamma, \Pi \fCenter \subst(s, x, [x]t) \colon C$
\DisplayProof
\]
It too binds the label of the discharged assumption $x \colon A$.

An $\Intro\land$ inference followed by an $\Elim\land_1$ inference may be
reduced from
\[
\bottomAlignProof
    \Axiom$\Gamma_1 \fCenter s \colon A$ 
    \Axiom$\Gamma_2 \fCenter t \colon B$
  \RightLabel{$\Intro\land$}
  \BinaryInf$\Gamma_1, \Gamma_2 \fCenter c^\land(s, t) \colon A \land B$
  \Axiom$x \colon A, \Gamma_3 \fCenter u \colon C$
\RightLabel{$\Elim\land_1$}
\BinaryInf$\Gamma_1, \Gamma_2, \Gamma_3 \fCenter 
  d^\land_1(c^\land(s, t), [x]u)$
\DisplayProof
\]
to
\[
\bottomAlignProof
\Axiom$\Gamma_1 \fCenter s \colon A$ 
\Axiom$x \colon A, \Gamma_3 \fCenter u \colon C$
\RightLabel{$\cut \colon x$}
\BinaryInf$\Gamma_1, \Gamma_3 \fCenter 
  \subst(s, x, [x]u) \colon C$
\DisplayProof
\]
and similarly for an $\Intro\land$ followed by a $\Elim\land_2$ inference.

The corresponding reductions for proof terms then are:
\begin{align*}
d^\land_1(c^\land(s, t), [x]u) & \rightarrow \subst(s, x, [x]u) \\
d^\land_2(c^\land(s, t), [x]u) & \rightarrow \subst(t, x, [x]u) \\
\end{align*}

A single general elimination rule for $\land$ is given by
\[
  \Axiom$\Gamma_0 \fCenter A \land B$
  \Axiom$A, B, \Gamma_1 \fCenter C$
\RightLabel{$\Elim\land$}
\BinaryInf$\Gamma_0, \Gamma_1 \fCenter C$
\DisplayProof
\]
The destructor corresponding to it is $d^\land$, it takes two
arguments, the proof terms for the major and minor premise.  The
labels of the discharged assumptions are abstracted.  The rule with
proof terms is as follows:
\[
  \Axiom$\Gamma_0 \fCenter s \colon A \land B$
  \Axiom$x \colon A, y \colon B, \Gamma_1 \fCenter t \colon C$
\RightLabel{$\Elim\land$}
\BinaryInf$\Gamma_0, \Gamma_1 \fCenter d^\land(s, [x, y]t) \colon C$
\DisplayProof
\]
The proof simplification corresponding to a $\Intro\land$ followed by $\Elim\land$ is:
\[
    \Axiom$\Gamma_1 \fCenter s \colon A$ 
    \Axiom$\Gamma_2 \fCenter t \colon B$
  \RightLabel{$\Intro\land$}
  \BinaryInf$\Gamma_1, \Gamma_2 \fCenter c^\land(s, t) \colon A \land B$
  \Axiom$x \colon A, y \colon B, \Gamma_3 \fCenter u \colon C$
\RightLabel{$\Elim\land$}
\BinaryInf$\Gamma_1, \Gamma_2, \Gamma_3 \fCenter 
  d^\land(c^\land(s, t), [x,y]u)$
\DisplayProof
\]
to
\[
  \Axiom$\Gamma_2 \fCenter t \colon B$
    \Axiom$\Gamma_1 \fCenter s \colon A$
    \Axiom$x \colon A, y \colon B, \Gamma_3 \fCenter u \colon C$
  \RightLabel{$\cut \colon x$}
  \BinaryInf$y \colon B,\Gamma_1, \Gamma_3 \fCenter \subst(s, x,
    [x]u) \colon C$
\RightLabel{$\cut \colon y$}
\BinaryInf$\Gamma_1, \Gamma_2, \Gamma_3 \fCenter \subst(t,
  y, (\subst(s, x, [x]u)) \colon C$
\DisplayProof
\]
Correspondingly, the $\beta$-conversion rule for $c^\land/d^\land$ is
\[
d^\land(c^\land(s, t), [x,y]u) \rightarrow 
\subst(t, y, \subst(s, x, [x,y]u)) 
\] 

For the conditional, the introduction and general elimination rules are
\[
  \Axiom$x \colon A, \Gamma \fCenter s \colon B$
\RightLabel{$\Intro\lif$}
\UnaryInf$\Gamma \fCenter c^\lif([x]s) \colon A \lif B$
\DisplayProof
\]\[
\Axiom$\Gamma_1 \fCenter s \colon A \lif B$
\Axiom$\Gamma_2 \fCenter t \colon A$
\Axiom$x \colon B, \Gamma_3 \fCenter u \colon C$
\RightLabel{$\Elim\lif$}
\TrinaryInf$\Gamma_1, \Gamma_2, \Gamma_3 \fCenter d^\lif(s, t, [x]u) \colon C$
\DisplayProof
\]
Here $c^\lif([x]s)$ is just a complex way of writing $\lambda x.s$.
$d^\lif(s, t, [x]u)$ is the generalized application operator of
\cite{JoachimskiMatthes2003}.  The corresponding proof simplification
obtained from a resolution refutation of the premises is
\[
    \Axiom$x \colon A, \Gamma_1 \fCenter s \colon B$
  \RightLabel{$\Intro\lif$}
  \UnaryInf$\Gamma_1 \fCenter c^\lif([x]s) \colon A \lif B$
  \Axiom$\Gamma_2 \fCenter t \colon A$
  \Axiom$x \colon B, \Gamma_3 \fCenter u \colon C$
\RightLabel{$\Elim\lif$}
\TrinaryInf$\Gamma_1, \Gamma_2, \Gamma_3 \fCenter 
  d^\lif(c^\lif([x]s), t, [x]u) \colon C$
\DisplayProof
\]
to
\[
    \Axiom$\Gamma_2 \fCenter t \colon A$
    \Axiom$x \colon A, \Gamma_1 \fCenter s \colon B$
  \RightLabel{$\cut \colon x$}
  \BinaryInf$\Gamma_1, \Gamma_2 \fCenter \subst(t, x, [x]s) \colon B$
  \Axiom$x \colon B, \Gamma_3 \fCenter u \colon C$
\RightLabel{$\cut \colon x$}
\BinaryInf$\Gamma_1, \Gamma_2, \Gamma_3 \fCenter 
\subst(\subst(t, x, [x]s), x, [x]u) \colon C$
\DisplayProof
\]
The $\beta$-reduction rule for the $c^\lif/d^\lif$ pair is then:
\[
d^\lif(c^\lif([x]s), t, [x]u) \to \subst(\subst(t, x, [x]s), x, [x]u)
\]
or, if we prefer standard notation
\[
\mathrm{app}(\lambda x.s, t, [x]u) \to u[s[t/x]/x]
\]

In general, we have the following situation: For a connective with
introduction rules $\Intro\opa_k$ and elimination rules $\Elim\opa_l$ we
introduce function symbols: constructors $c_k^\opa$ and destructors
$d^\opa_l$.  Each such function symbol has the same arity as the
corresponding rule has premises.  In an application of the rule, the
succedent of the conclusion is labelled by the term
\[
c^\opa_k([\vec x_1^k]s_1^k, \dots, [\vec x_n^k]s_n^k) \text{ or }
d^\opa_l(s_0, [\vec x_1^l]s_1^l, \dots, [\vec x_m^l]s_m^l),
\]
where the $s_i$ are the terms labelling the succedents of the premise
sequents and $\vec x_i$ are the labels of the assumptions discharged
in the $i$-th premise.

The $\beta$-reduction rule for a redex of the form
\[
d^\opa_l(c^\opa_k([\vec x_1^k]s_1^k, \dots, [\vec x_n^k]s_n^k), [\vec
  x_1^l]s_1^l, \dots, [\vec x_m^l]s_m^l)
\]
is provided by the corresponding simplification conversion on proofs,
which in turn is given by a derivation segment consisting only of cut
rules, i.e., a resolution refutation of the premises of the $\Intro\opa_i$
and $\Elim\opa_j$ rules.  This simplification conversion produces a term
built from the arguments $s_i^k$ and $s_j^l$ using only $\subst$
operations. There are corresponding general $\alpha$ conversions:
replace any subterm of the form $[x]s$ by $[y]s'$ where $s'$ is the
result of substituting $y$ for every free occurrence of~$x$ in $s$,
provided $y$ is free for $x$ in~$s$. $\eta$-conversion corresponds to
converting between $[x]s$ and $s$, provided $x$ is not free in~$s$.
Finally, we define substitution redexes $\subst(t, y, [\vec x]s)$ which
convert to $[\vec z]s'$, where $s'$ is $s$ with every free occurrence
of $y$ in $s$ replaced by $t$, provided $t$ is free for $y$ in~$s$ and
$y$ occurs in~$\vec x$, and $\vec z$ is $\vec x$ without~$y$. If $y
\notin \vec x$, then $\subst(t, y, [\vec x]s)$ converts to~$[\vec x]s$.

\begin{conj}
The typed $\lambda$-calculus obtained in this way strongly normalizes.
\end{conj}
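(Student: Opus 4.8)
The plan is to prove strong normalization by the Tait--Girard method of reducibility candidates, adapted to the general elimination rules. First I would fix the reduction relation $\to$ to consist of the $\beta$-reductions (one schema for each constructor/destructor pair $c^\opa_k$, $d^\opa_l$, read off from a resolution refutation of the premises of $\Intro\opa$ and $\Elim\opa$, exactly as in the normalization proof), the unfolding of substitution redexes $\subst(t,y,[\vec x]s)\to[\vec z]s[t/y]$, and $\eta$-contraction, all closed under arbitrary term-forming contexts. A convenient feature here is that $\subst$ performs a \emph{complete} meta-substitution in one step, so $\subst$-reduction on its own is terminating and confluent (it simply computes capture-avoiding substitution); I may therefore normalize all $\subst$-redexes eagerly and regard terms-with-$\subst$ as notation for their unfolded forms, leaving $\beta$ as the only genuine reduction. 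This sidesteps the pathologies of small-step explicit substitutions. I would call a term \emph{neutral} if it is not of the form $c^\opa_k(\dots)$.

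Next I would define, by induction on the structure of the type $T$, a reducibility predicate $\mathrm{RED}_T$ using the \emph{introduction} rules (the positive presentation): atomic types get $\mathsf{SN}$, and $t\in\mathrm{RED}_{\opa(\vec A)}$ iff $t\in\mathsf{SN}$ and whenever $t$ reduces to an introduction term $c^\opa_k([\vec x_1]s_1,\dots)$ each component $s_i$ is reducible at the type of the corresponding premise of $\Intro\opa_k$ (relative to reducible substitutions for the discharged $\vec x_i$). Because those premise and assumption types are among the arguments $A_j$, hence proper subformulas of $\opa(\vec A)$, the definition is well-founded on type structure; this is the point of using the introduction-based clause rather than an elimination-based one, which would have to quantify over an arbitrary result type and so would not be a subformula induction. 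I would then verify the candidate conditions (CR1)~$\mathrm{RED}_T\subseteq\mathsf{SN}$, (CR2) closure under $\to$, (CR3) a neutral term all of whose one-step reducts are reducible is itself reducible, together with a substitution lemma: if $s$ and $t$ are reducible, so is $\subst(t,y,[\vec x]s)$.

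The heart of the argument is a \emph{Main Lemma} for destructors: if $t\in\mathrm{RED}_{\opa(\vec A)}$ and all minor-premise arguments of $\Elim\opa_l$ are reducible (their continuations reducible at the result type $C$), then $d^\opa_l(t,[\vec x_1]s_1,\dots)\in\mathrm{RED}_C$. This is proved by induction on the sum of the $\mathsf{SN}$-lengths of $t$ and the arguments: by (CR3) it suffices to show every one-step reduct is reducible, and the only non-routine reduct occurs when $t$ is already a constructor, whose contractum is precisely the simplification assembled from the reducible pieces $s_i^k$, $s_j^l$ by $\subst$ operations along a resolution refutation of $\C(\opa)^+\cup\C(\opa)^-$ (Theorem~\ref{thm:normalization}). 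Since that clause set is unsatisfiable, a refutation exists \emph{uniformly} for every connective, so the entire SN argument depends only on this combinatorial feature and not on the particular operator. Adequacy --- every well-typed term is reducible under reducible substitutions --- then follows by induction on the typing derivation, the introduction case using the definition of $\mathrm{RED}$ and the elimination case using the Main Lemma.

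The step I expect to be the main obstacle is exactly this critical case of the Main Lemma, for the same reason that makes the multi-conclusion normalization delicate: a resolution refutation may \emph{reuse} a premise clause, so the contractum can contain several copies of the same minor-premise subterm $s$, nested inside iterated $\subst$s. The induction must therefore remain valid under this duplication --- one needs reducibility of a term built by repeatedly substituting \emph{shared} reducible subterms, which is the proof-term analog of the right-contraction difficulty flagged after Theorem~\ref{thm:normalization}. Controlling the reduction behaviour of these duplicated redexes, and confirming that a refutation can always be chosen that does not reintroduce a redex of equal or higher degree, is the genuinely delicate point; once it is secured, (CR1) gives $t\in\mathsf{SN}$ for every typable $t$, and $\eta$ is absorbed by a standard postponement argument, yielding strong normalization.
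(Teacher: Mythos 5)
The statement you are addressing is stated in the paper as a \emph{conjecture}; the paper offers no proof of it, so there is nothing to compare your argument against on the paper's side. What the paper does prove is only \emph{weak} normalization for the underlying natural deduction systems (Theorem~\ref{thm:normalization} and its single-conclusion variant), by a degree-and-count induction that selects a suitable maximal segment; you are right that this method does not yield strong normalization of the term calculus and that a reducibility-candidates argument is the natural thing to try instead. Your outline of that argument---eager unfolding of $\subst$, an introduction-based (positive) definition of $\mathrm{RED}_T$ well-founded on the subformula structure because all auxiliary formulas of $\Intro\opa_k$ and $\Elim\opa_l$ are among the arguments $A_j$, the conditions (CR1)--(CR3), and a Main Lemma for destructors proved by induction on strong-normalization lengths---is a sensible and essentially standard plan.

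However, what you have written is a plan with an acknowledged hole, not a proof. You yourself defer the critical case of the Main Lemma (``once it is secured\dots''), and that case is precisely where all the content lies: one must show that the contractum of $d^\opa_l(c^\opa_k(\dots),\dots)$, assembled by iterated $\subst$ along a resolution refutation of $\C(\opa)^+\cup\C(\opa)^-$, is reducible. I would add two remarks. First, the duplication you flag is arguably \emph{less} threatening in the reducibility framework than you suggest: the substitution lemma ``$s$ reducible under reducible instances of $x$, $t$ reducible $\Rightarrow$ $s[t/x]$ reducible'' is insensitive to how many times $x$ occurs in $s$, which is exactly why Tait--Girard handles the duplicating $\beta$-rule of ordinary $\lambda$-calculus; likewise there is no need to control the ``degree'' of redexes created in the contractum, since reducibility proofs do not count degrees. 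The genuine work is elsewhere: (i) formulating $\mathrm{RED}$ correctly for arbitrary connectives, whose introduction premises may simultaneously discharge assumptions of some $A_i$ and conclude others (so each component $s_i$ is reducible only \emph{relative to} reducible instances of its discharged variables), and whose premises may have empty succedent, requiring a clause for the empty/absurdity type; (ii) verifying (CR3) for the positive definition (a neutral term can only reach constructor form through a one-step reduct, so this should go through, but it must be checked); and (iii) showing that the composition of $\subst$s dictated by an \emph{arbitrary} resolution refutation---whose order of cuts, and hence nesting of substitutions, varies with the connective---always types and discharges correctly so that the substitution lemma applies at each step. None of (i)--(iii) is carried out, and the $\eta$-postponement is asserted rather than proved for the generalized binders $[\vec x]s$. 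As it stands the proposal does not establish the conjecture; it is a credible roadmap whose central lemma remains open.
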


\section{Free Deduction}
\label{sec:fd}

\cite{Parigot1992} has introduced a calculus he called \emph{free
deduction}~$\FD$. It has pairs of rules for each connective called
left and right elimination rules. The right elimination rules are just
the general elimination rules of sequent-style natural deduction. Left
elimination rules are what \cite{Milne2015} has called ``general
introduction rules.''  Rather than adding the complex formula
containing a connective as a conclusion, they allow assumptions
containing the complex formula to be discharged. They are thus
perfectly symmetric with the general elimination rule. The major
premise contains the principal formula not on the right (as a
conclusion) but on the left (as a discharged assumption).

For instance, the $\FD$ rules for $\lif$ are:
\[
  \Axiom$A \lif B, \Gamma \fCenter \Delta$
  \Axiom$A, \Pi \fCenter \Sigma$ 
\RightLabel{$\LElim\lif_1$}
\BinaryInf$\Gamma, \Pi \fCenter \Delta, \Sigma$
\DisplayProof
\qquad
  \Axiom$A \lif B, \Gamma \fCenter \Delta$
  \Axiom$\Pi \fCenter \Sigma, B$
\RightLabel{$\LElim\lif_2$}
\BinaryInf$\Gamma, \Pi \fCenter \Delta, \Sigma$
\DisplayProof
\]
and
\[
    \Axiom$\Gamma \fCenter \Delta, A \lif B$
    \Axiom$\Pi \fCenter \Sigma, A$
    \Axiom$B, \Pi' \fCenter \Sigma$
\RightLabel{$\RElim\lif$}
\TrinaryInf$\Gamma, \Pi, \Pi' \fCenter \Delta, \Sigma, \Sigma'$
\DisplayProof
\]
This is a multi-conclusion version of Milne's natural deduction system
with general elimination (right elimination) and general introduction
(left elimination) rules. Here the $\LElim\lif$ rule is split; an
equivalent version has a single rule,
\[
  \Axiom$A \lif B, \Gamma \fCenter \Delta$
  \Axiom$A, \Pi \fCenter \Sigma, B$
\RightLabel{$\LElim\lif$}
\BinaryInf$\Gamma, \Pi \fCenter \Delta, \Sigma$
\DisplayProof
\]
Parigot presents this (and non-split rules for $\land$, $\lor$) as a
variant system~$\FD'$.

Free deduction rules (and hence Milne's ``general introduction
rules'') can be formulated for arbitrary connectives $\opa$ along the
same lines as for sequent calculus and natural deduction.  Given an
introduction rule for $\opa$ with premises $\Pi_i, \Gamma \fCenter
\Delta, \Lambda_i$, the corresponding $\LElim\opa$ rule is
\[
  \Axiom$\opa(\vec A), \Gamma \fCenter \Delta$ 
  \Axiom$\Pi_1, \Gamma \fCenter \Delta, \Lambda_1$
  \AxiomC{\dots}
  \Axiom$\Pi_n, \Gamma \fCenter \Delta, \Lambda_n$
\RightLabel{$\LElim\opa$}
\QuaternaryInf$\Gamma \fCenter \Delta$
\DisplayProof
\]

Free deduction embeds both sequent calculus and natural deduction.
The left and right sequent calculus rules can be simulated by taking
the major premise in the corresponding free deduction right or left
elimination rule to be an initial sequent.  The introduction rule of
natural deduction is obtained the same way as the right rule of the
sequent calculus from the $\LElim{}$ rule, e.g.,
\[
  \Axiom$\opa(\vec A) \fCenter \opa(\vec A)$ 
  \Axiom$\Pi_1, \Gamma \fCenter \Delta, \Lambda_1$
  \AxiomC{\dots}
  \Axiom$\Pi_n, \Gamma \fCenter \Delta, \Lambda_n$
\RightLabel{$\LElim\opa$}
\QuaternaryInf$\Gamma \fCenter \Delta, \opa(\vec A)$
\DisplayProof
\]
Cuts in this system are just like segments in natural deduction,
except that here the formula $\opa(\vec A)$ which appears in the major
premise of an $\Elim\opa$ rule at the end of the segment is---not the
conclusion of an $\Intro\opa$ rule as in natural deduction,
but---discharged by an application of $\LElim\opa$ at the beginning of a
segment.  A segment of length~1 then is of the form,
\[
  \Axiom$\opa(\vec A), \Gamma \fCenter \Delta, \opa(\vec A)$
  \Axiom$\cdots \Pi_i, \Gamma \fCenter \Delta, \Lambda_i \cdots$
  \RightLabel{$\LElim\opa$}
  \BinaryInf$\Gamma \fCenter \Delta, \opa(\vec A)$
  \Axiom$\cdots\Pi_j', \Gamma \fCenter \Delta, \Lambda_j'\cdots$
\RightLabel{$\RElim\opa$}
\BinaryInf$\Gamma \fCenter \Delta$
\DisplayProof
\]
The cut-elimination mechanism of $\FD$ can be straightforwardly
adapted to the generalized case.  In this situation, we have to
introduce labels not just for assumptions (formulas occurring on the
left of a sequent) but also for those on the right.  A maximal segment
can be replaced by a sequence of cuts. As in he case of natural
deduction, cuts correspond to proof substitutions. In $\FD$, however,
not only can we substitute derivations of $\Gamma \fCenter A$ for
assumptions in derivations of $x \colon A, \Pi \fCenter \Lambda$, but
also conversely substitute a derivation of $A, \Pi \fCenter \Lambda$ in
a derivation of $\Gamma \fCenter \Lambda, x \colon A$.  This of course
results in a nondeterministic reduction procedure, as a cut inference
may be replaced either by a substitution of the conclusion of the left
premise in corresponding assumptions in the proof ending in the right
premise, or the other way around.

\section{Quantifiers}
\label{sec:quantifiers}

The basic principle underlying the generation of sequent calculus and
natural deduction rules with the usual proof-theoretic properties can
be extended to quantifiers as well.  The important additional aspect
of quantifier rules is that they (sometimes) require
\emph{eigenvariable} conditions, as in the case of $\Right\forall$.
Eigenvariable conditions are necessary for soundness, but they also
guarantee that terms can be substituted in a proof for a variable
appearing in an auxiliary formula, which is crucial for the
cut-elimination and normalization properties.

The correspondence between a sequent calculus rule and a set of
clauses also holds for quantifiers.  The most general case of a
quantifier for which this model could be considered has a fixed finite
number of bound variables and a fixed finite number of schematic
subformulas which may contain these variables: \[\Q x_1 \dots
x_m(A_1(x_1, \dots, x_m), \dots, A_n(x_1, \dots, x_m)).\] The truth
conditions of such a quantifier $\Q$ may be given by a set~$\C$ of
clauses, i.e., a conjunction of disjunctions of literals, where each
atom is of the form $A_i(t_1, \dots, t_m)$, and where $t_j$ is either a
variable $x_j$ or a Skolem term~$f(x_1, \dots, x_k)$ with $k < j$.
Under this interpretation, $\Q \vec x(A_1(\vec x), \dots, A_n(\vec
x))$ is true iff $\exists \vec f\forall \vec x \C$. Similarly, falsity
conditions may be given for $\Q$ as well.

Such a set of clauses $\C$ corresponds to a sequent calculus rule for
$\Right\Q$, by interpreting a variable $x_j$ as an eigenvariable~$a_j$ and
a Skolem term $f(x_1, \dots, x_k)$ as schematic variable for a term
which may contain the eigenvariables $a_1$, \dots, $a_k$). If the
truth and falsity conditions for $\Q$ underlying the $\Left\Q$ and $\Right\Q$
rules are mutually exclusive (i.e., in each interpretation at most one
of the two is satisfied), the resulting sequent calculus rules are
sound. However, they will be complete only if it is not the case that
the left and right rules each contain both term and eigenvariables.

Let us consider some examples.  The syllogistic quantifiers, e.g.,
``All $A$s are $B$s'' and ``Some $A$s are $B$s,'' fit this scheme:
\begin{align*}
\textsf{A} x(A(x), B(x)) & \text{ iff } \forall x(\lnot A(x) \lor B(x)) \\
\lnot \textsf{A} x(A(x), A(x)) & \text{ iff } \exists f(A(f) \land \lnot B(f)) \\
\textsf{S} x(A(x), B(x)) & \text{ iff } \exists f(A(f) \land B(f)) \\
\lnot \textsf{S} x(A(x), B(x)) & \text{ iff } \forall x(\lnot A(x) \lor \lnot B(x))
\end{align*}
We obtain the rules
\[
  \Axiom$\Gamma \fCenter \Delta, A(t)$
  \Axiom$B(t), \Gamma \fCenter \Delta$
\RightLabel{$\Left{\textsf{A}}$}
\BinaryInf$\textsf{A} x(A(x),B(x)), \Gamma \fCenter \Delta$
\DisplayProof
\qquad
\Axiom$A(a), \Gamma \fCenter \Delta, B(a)$
\RightLabel{$\Right{\textsf{A}}$}
\UnaryInf$\Gamma \fCenter \Delta, \textsf{A}x(A(x), B(x))$
\DisplayProof
\]
and
\[
\Axiom$A(a), B(a), \Gamma \fCenter \Delta$
\RightLabel{$\Left{\textsf{S}}$}
\UnaryInf$\textsf{S}x(A(x), B(x)), \Gamma \fCenter \Delta$
\DisplayProof
\qquad
\Axiom$\Gamma \fCenter \Delta, A(t)$
\Axiom$\Gamma \fCenter \Delta, B(t)$
\RightLabel{$\Right{\textsf{S}}$}
\BinaryInf$\Gamma \fCenter \Delta, \textsf{S}x(A(x), B(x))$
\DisplayProof
\]
Sch\"onfinkel's generalized Sheffer stroke $A(x) \mid^x B(x)$ is the
dual of $\textsf{S}$:
\begin{align*}
\textsf{U} x(A(x), B(x)) & \text{ iff } \forall x(\lnot A(x) \lor \lnot B(x)) \\
\lnot \textsf{U} x(A(x), B(x)) & \text{ iff } \exists f(A(f) \land B(f))
\end{align*}
and has the rules
\[
  \Axiom$\Gamma \fCenter \Delta, A(t)$
  \Axiom$\Gamma \fCenter \Delta, B(t)$
\RightLabel{$\Left{\textsf{U}}$}
\BinaryInf$\textsf{U}x(A(x), B(x)), \Gamma \fCenter \Delta$
\DisplayProof
\qquad
\Axiom$A(a), B(a), \Gamma \fCenter \Delta$
\RightLabel{$\Right{\textsf{U}}$}
\UnaryInf$\Gamma \fCenter \Delta, \textsf{U}x(A(x), B(x))$
\DisplayProof
\]

We can also consider non-monadic quantifiers, e.g., 
the totality quantifier $\textsf{T}xy\,A(x, y)$, expressed by
\begin{align*}
\textsf{T} xy\,A(x, y) & \text{ iff } \exists f\forall x\,A(x, f(x)) \\
\lnot  \textsf{T} xy\,A(x, y) & \text{ iff } \exists g\forall x\,\lnot A(g, x)
\end{align*}
with the rules
\[
  \Axiom$A(s, b), \Gamma \fCenter \Delta$
\RightLabel{$\Left{\textsf{T}}$}
\UnaryInf$\textsf{T}xy\,A(x, y), \Gamma \fCenter \Delta$
\DisplayProof
\qquad
\Axiom$\Gamma \fCenter \Delta, A(a, t(a))$
\RightLabel{$\Right{\textsf{T}}$}
\UnaryInf$\Gamma \fCenter \Delta, \textsf{T}xy\,A(x, y)$
\DisplayProof
\]
In $\Left{\textsf{T}}$, the eigenvariable $a$ may not appear in the
conclusion, but also not in~$t$, while in $\Right{\textsf{T}}$ the
eigenvariable $a$ may not appear in the conclusion, but is allowed to
appear in~$t(a)$. These rules are sound, but not complete. For
instance, there is no derivation of $\textsf{T}xy\, A(x, y) \fCenter
\textsf{T}xy\, A(x,y)$ from instances of $A(x, y) \fCenter A(x, y)$, since
the eigenvariable condition is violated if $\Left{\textsf{T}}$ or
$\Right{\textsf{T}}$ is applied to such an initial sequent.

Of course, not all natural quantifiers can even be provided with rules
using this framework.  For instance, the Henkin quantifier
$\left\{{\forall x\exists y}\atop{\forall u\exists v}\right\} A(x, y,
u, v)$ has truth conditions given by
\[
\exists f\exists g\forall x\forall u\,A(x,f(x),u,g(u))
\]
but its falsity conditions cannot be stated in this
form.

Whether or not the rules obtained this way are complete, they always
enjoy cut-elimination. Since the clause sets $\C$ and $\C'$ corresponding
to the $\Left\Q$ a $\Right\Q$ rules are mutually exclusive, $\C \cup
\C'$ is an unsatisfiable set of clauses.  Thus a \mix{} inference in
which the left premise is the conclusion of $\Right\Q$ and the right
premise the conclusion of $\Left\Q$ can be reduced to \mix{} inferences
operating on the premises of the $\Left\Q$ and $\Right\Q$ rules, or on
sequents obtained from them by substituting terms for eigenvariables.
For instance, consider
\[
    \Axiom$\Gamma \fCenter \Delta, A(a, t(a))$
  \RightLabel{$\Right{\textsf{T}}$}
  \UnaryInf$\Gamma \fCenter \Delta, \textsf{T}xy\,A(x, y)$
    \Axiom$A(s, b), \Pi \fCenter \Lambda$
  \RightLabel{$\Left{\textsf{T}}$}
  \UnaryInf$\textsf{T}xy\,A(x, y), \Pi \fCenter \Lambda$
\RightLabel{$\cut$}
\BinaryInf$\Gamma, \Pi \fCenter \Delta, \Lambda$
\DisplayProof
\]
The clause set $\{\{\lnot A(g, x)\}, \{A(y, f(y))\}\}$ is refutable by
a single resolution inference, with most general unifier $\{g \mapsto
y, f(g) \mapsto x\}$. This means the eigenvariable~$a$ corresponding
to $y$ will be substituted by the term~$s$ corresponding to~$g$, and
the eigenvariable~$b$ corresponding to $x$ will be substituted by the
term corresponding to~$f(g)$, i.e., $t(s)$.  The restriction that $b$
must not occur in $s$ guarantees that this latter substitution
performed in the proof ending in the premise of $\textsf{T}R$ can be
carried out.  We can then replace the cut by
\[
  \Axiom$\Pi \fCenter \Lambda, A(s, t(s))$
  \Axiom$A(s, t(s)), \Gamma \fCenter \Delta$
\RightLabel{$\cut$}
\BinaryInf$\Gamma, \Pi \fCenter \Delta, \Lambda$
\DisplayProof
\]

We obtain natural deduction rules just as in the propositional case;
normalization holds here as well for the same reason: the clause sets
corresponding to the premises of a $\Intro\Q$ rule together with the minor
premises of a $\Elim\Q$ rule are jointly unsatisfiable, hence refutable
by resolution. The resolution refutation translates into a
sequence of cut inferences applied to derivations ending in these
premises, possibly with eigenvariables replaced by terms.  The
substitution for each cut inference is provided by the unifier in the
corresponding resolution inference; eigenvariable conditions guarantee
that the corresponding substitution of terms for eigenvariables can be
applied to the entire derivation. (This is the idea behind the ``cut
elimination by resolution'' method of \cite{BaazLeitsch2000}.)

\subsection*{Acknowledgements}

The results in this paper were presented at the 2016 Annual Meeting of
the Association for Symbolic Logic 2016 (\cite{Zach2017a}) and at the
Ohio State University/UConn Workshop on Truth in 2017. The author
  would like to thank audiences there and the referees for the \emph{Review}
for their helpful comments and suggestions.

\bibliographystyle{asl}
\bibliography{gencalcs}

@inproceedings{BaazFermuller1996,
  address = {{Los Alamitos}},
  title = {Intuitionistic Counterparts of Finitely-Valued Logics},
  isbn = {978-0-8186-7392-4},
  abstract = {W e investigate the relation between Kripke 's model structures f o r intuitionistic logic and the simple syntactical restriction that turns the classical sequent calculus into an intuitionistic one. For this purpose we generalize ordinary Kripke structures to ones based o n arbitrary finite sets of truth values and show that imposing a proper syntactical restriction on many-placed sequents leads t o calculi that are correct and cut-free complete for the new logics.},
  language = {en},
  booktitle = {26th {{International Symposium}} on {{Multiple}}-Valued {{Logic}}. {{Proceedings}}},
  publisher = {{IEEE Press}},
  doi = {10.1109/ISMVL.1996.508349},
  author = {Baaz, Matthias and Ferm{\"u}ller, Christian G.},
  year = {1996},
  pages = {136-141},
  file = {/home/zach/Documents/bib/Baaz and Fermuller - 1996 - Intuitionistic counterparts of finitely-valued log.pdf}
}

@article{Cellucci1992,
  title = {Existential Instantiation and Normalization in Sequent Natural Deduction},
  volume = {58},
  issn = {01680072},
  language = {en},
  number = {2},
  journal = {Annals of Pure and Applied Logic},
  doi = {10.1016/0168-0072(92)90002-H},
  author = {Cellucci, Carlo},
  month = sep,
  year = {1992},
  pages = {111-148},
  file = {/home/zach/Documents/bib/Cellucci - 1992 - Existential instantiation and normalization in seq.pdf}
}

@book{NegrivonPlato2001,
  address = {{Cambridge}},
  title = {Structural Proof Theory},
  isbn = {978-0-521-79307-0},
  lccn = {QA9.54 .N44 2001},
  language = {en},
  publisher = {{Cambridge University Press}},
  author = {Negri, Sara and {von Plato}, Jan},
  year = {2001},
  keywords = {Proof theory},
  file = {/home/zach/Documents/bib/Negri and Von Plato - 2001 - Structural proof theory.pdf}
}

@inproceedings{Parigot1992a,
  series = {Lecture {{Notes}} in {{Computer Science}}},
  title = {$\lambda\mu$-Calculus: {{An}} Algorithmic Interpretation of Classical Natural Deduction},
  volume = {624},
  shorttitle = {{$\Lambda\mu$}-{{Calculus}}},
  language = {en},
  booktitle = {Logic {{Programming}} and {{Automated Reasoning}}},
  publisher = {{Springer, Berlin, Heidelberg}},
  doi = {10.1007/BFb0013061},
  author = {Parigot, Michel},
  year = {1992},
  pages = {190-201},
  file = {/home/zach/Documents/bib/Parigot - 1992 - λμ-Calculus - An algorithmic interpretation of classical natural deduction.pdf}
}

@incollection{Milne2015,
  address = {{Berlin}},
  series = {Outstanding {{Contributions}} to {{Logic}}},
  title = {Inversion Principles and Introduction Rules},
  volume = {7},
  isbn = {978-3-319-11040-0},
  abstract = {Following Gentzen's practice, borrowed from intuitionist logic, Prawitz takes the introduction rule(s) for a connective to show how to prove a formula with the connective dominant. He proposes an inversion principle to make more exact Gentzen's talk of deriving elimination rules from introduction rules. Here I look at some recent work pairing Gentzen's introduction rules with general elimination rules. After outlining a way to derive Gentzen's own elimination rules from his introduction rules, I give a very different account of introduction rules in order to pair them with general elimination rules in such a way that elimination rules can be read off introduction rules, introduction rules can be read off elimination rules, and both sets of rules can be read off classical truth-tables. Extending to include quantifiers, we obtain a formulation of classical first-order logic with the subformula property.},
  language = {en},
  booktitle = {Dag {{Prawitz}} on {{Proofs}} and {{Meaning}}},
  publisher = {{Springer}},
  doi = {10.1007/978-3-319-11041-7_8},
  author = {Milne, Peter},
  editor = {Wansing, Heinrich},
  year = {2015},
  keywords = {Elimination rules,General elimination rules,Introduction rules,Inversion principle,Sequent calculus},
  pages = {189-224},
  file = {/home/zach/Documents/bib/Milne - 2015 - Inversion Principles and Introduction Rules.pdf}
}

@article{JoachimskiMatthes2003,
  title = {Short Proofs of Normalization for the Simply-Typed {$\lambda$}-Calculus, Permutative Conversions and {{G{\"o}del}}'s {{T}}},
  volume = {42},
  issn = {0933-5846, 1432-0665},
  abstract = {Inductive characterizations of the sets of terms, the subset of strongly normalizing terms and normal forms are studied in order to reprove weak and strong normalization for the simply-typed {$\lambda$}-calculus and for an extension by sum types with permutative conversions. The analogous treatment of a new system with generalized applications inspired by generalized elimination rules in natural deduction, advocated by von Plato, shows the flexibility of the approach which does not use the strong computability/candidate style a` la Tait and Girard. It is also shown that the extension of the system with permutative conversions by {$\eta$}rules is still strongly normalizing, and likewise for an extension of the system of generalized applications by a rule of ``immediate simplification''. By introducing an infinitely branching inductive rule the method even extends to Go\textasciidieresis{}del's T.},
  language = {en},
  number = {1},
  journal = {Archive for Mathematical Logic},
  doi = {10.1007/s00153-002-0156-9},
  author = {Joachimski, Felix and Matthes, Ralph},
  month = jan,
  year = {2003},
  pages = {59-87},
  file = {/home/zach/Documents/bib/Joachimski and Matthes - 2003 - Short proofs of normalization for the simply- typed λ-calculus, permutative.pdf}
}

@article{Stalmarck1991,
  title = {Normalization Theorems for Full First Order Classical Natural Deduction},
  volume = {56},
  issn = {0022-4812, 1943-5886},
  language = {en},
  number = {1},
  journal = {The Journal of Symbolic Logic},
  doi = {10.2307/2274910},
  author = {St{\aa}lmarck, Gunnar},
  month = mar,
  year = {1991},
  pages = {129-149},
  file = {/home/zach/Documents/bib/Stålmarck - 1991 - Normalization theorems for full first order classi.pdf}
}

@phdthesis{Baaz1984,
  type = {Dissertation},
  title = {Die {{Anwendung}} Der {{Methode}} Der {{Sequenzialkalk{\"u}le}} Auf Nichtklassische {{Logiken}}},
  school = {Universit{\"a}t Wien},
  author = {Baaz, Matthias},
  year = {1984}
}

@article{BaazFermullerZach1994,
  title = {Elimination of Cuts in First-Order Finite-Valued Logics},
  volume = {29},
  copyright = {All rights reserved},
  number = {6},
  journal = {Journal of Information Processing and Cybernetics EIK},
  author = {Baaz, Matthias and Ferm{\"u}ller, Christian G. and Zach, Richard},
  year = {1994},
  keywords = {article},
  pages = {333-355},
  file = {/home/zach/Documents/bib/Baaz et al - 1994 - Elimination of Cuts in First-order Finite-valued Logics.pdf},
  eprint = {BAAEOC},
  eprinttype = {philpapers},
  scholar = {15462298465941074350}
}

@inproceedings{BaazFermullerZach1993b,
  address = {{Los Alamitos}},
  title = {Systematic Construction of Natural Deduction Systems for Many-Valued Logics},
  copyright = {All rights reserved},
  booktitle = {23rd {{International Symposium}} on {{Multiple}}-Valued {{Logic}}. {{Proceedings}}},
  publisher = {{IEEE Press}},
  doi = {10.1109/ISMVL.1993.289558},
  author = {Baaz, Matthias and Ferm{\"u}ller, Christian G. and Zach, Richard},
  year = {1993},
  keywords = {chapter},
  pages = {208-213},
  file = {/home/zach/Documents/bib/Baaz et al - 1993 - Systematic construction of natural deduction systems for many-valued logics.pdf},
  eprint = {FERSCO-3},
  eprinttype = {philpapers},
  scholar = {8129704214380989574,8943239453694643393}
}

@article{Schroeder-Heister1984,
  title = {A Natural Extension of Natural Deduction},
  volume = {49},
  issn = {1943-5886},
  number = {4},
  journal = {Journal of Symbolic Logic},
  doi = {10.2307/2274279},
  author = {{Schroeder-Heister}, Peter},
  month = dec,
  year = {1984},
  pages = {1284-1300},
  file = {/home/zach/Documents/bib/Schroeder-Heister - 1984 - A natural extension of natural deduction.pdf},
  numpages = {17}
}

@article{Zach2016,
  title = {Natural Deduction for the {{Sheffer}} Stroke and {{Peirce}}'s Arrow (and Any Other Truth-Functional Connective)},
  volume = {45},
  copyright = {All rights reserved},
  issn = {0022-3611, 1573-0433},
  abstract = {Methods available for the axiomatization of arbitrary finite-valued logics can be applied to obtain sound and complete intelim rules for all truth-functional connectives of classical logic including the Sheffer stroke (NAND) and Peirce's arrow (NOR). The restriction to a single conclusion in standard systems of natural deduction requires the introduction of additional rules to make the resulting systems complete; these rules are nevertheless still simple and correspond straightforwardly to the classical absurdity rule. Omitting these rules results in systems for intuitionistic versions of the connectives in question.},
  language = {en},
  number = {2},
  journal = {Journal of Philosophical Logic},
  doi = {10.1007/s10992-015-9370-x},
  author = {Zach, Richard},
  month = apr,
  year = {2016},
  keywords = {article},
  pages = {183-197},
  file = {/home/zach/Documents/bib/Zach - 2016 - Natural Deduction for the Sheffer Stroke and Peirc.pdf},
  eprint = {ZACNDF},
  eprinttype = {philpapers},
  scholar = {7700562534442313606}
}

@phdthesis{Zach1993,
  address = {{Vienna, Austria}},
  type = {Diplomarbeit},
  title = {Proof Theory of Finite-Valued Logics},
  copyright = {All rights reserved},
  school = {Technische Universit{\"a}t Wien},
  author = {Zach, Richard},
  year = {1993},
  keywords = {thesis},
  scholar = {9133519663533066510,10148053650071006680}
}

@book{Prawitz1965,
  address = {{Stockholm}},
  series = {Stockholm {{Studies}} in {{Philosophy}}},
  title = {Natural Deduction: A Proof-Theoretical Study},
  number = {3},
  publisher = {{Almqvist \& Wiksell}},
  author = {Prawitz, Dag},
  year = {1965},
  file = {/home/zach/Documents/bib/Prawitz - 1965 - Natural deduction - a proof-theoretical study.pdf},
  note = {Reprinted by Dover, 2006}
}

@article{Gentzen1934,
  title = {{Untersuchungen {\"u}ber das logische Schlie{\ss}en I\textendash{}II}},
  volume = {39},
  language = {de},
  number = {1},
  journal = {Mathe\-mati\-sche Zeit\-schrift},
  doi = {10.1007/BF01201353},
  author = {Gentzen, Gerhard},
  year = {1934},
  pages = {176-210, 405-431},
  file = {/home/zach/Documents/bib/Gentzen - 1934 - Untersuchungen über das logische Schließen I--II.pdf},
  note = {English translation in \cite[pp.~68--131]{Gentzen1969}}
}

@inproceedings{Parigot1992,
  address = {{Berlin, Heidelberg}},
  series = {Lecture {{Notes}} in {{Computer Science}}},
  title = {Free Deduction: {{An}} Analysis of ``Computations'' in Classical Logic},
  volume = {592},
  isbn = {978-3-540-55460-8},
  abstract = {Cut-elimination is a central tool in proof-theory, but also a way of computing with proofs used for constructing new functional languages. As such it depends on the properties of the deduction system in which proofs are written.},
  language = {en},
  booktitle = {Logic {{Programming}}},
  publisher = {{Springer Berlin Heidelberg}},
  doi = {10.1007/3-540-55460-2_27},
  author = {Parigot, Michel},
  editor = {Voronkov, Andrej},
  year = {1992},
  pages = {361-380},
  file = {/home/zach/Documents/bib/Parigot - 1992 - Free deduction An analysis of “Computations” in c.pdf}
}

@incollection{Kutschera1962,
  address = {{Freiburg and Munich}},
  title = {{Zum Deduktionsbegriff der klassischen Pr{\"a}dikatenlogik erster Stufe}},
  language = {de},
  booktitle = {{Logik und Logikkalk{\"u}l}},
  publisher = {{Karl Alber}},
  author = {{von Kutschera}, Franz},
  editor = {K{\"a}sbauer, Max and {von Kutschera}, Franz},
  year = {1962},
  pages = {211-236}
}

@article{Boricic1985,
  title = {On Sequence-Conclusion Natural Deduction Systems},
  volume = {14},
  issn = {1573-0433},
  language = {en},
  number = {4},
  journal = {Journal of Philosophical Logic},
  doi = {10.1007/BF00649481},
  author = {Bori{\v c}i{\'c}, Branislav R.},
  month = nov,
  year = {1985},
  keywords = {Deduction System,Natural Deduction,Natural Deduction System},
  pages = {359-377},
  file = {/home/zach/Documents/bib/Boričić - 1985 - On sequence-conclusion natural deduction systems.pdf}
}

@article{SambinBattilottiFaggian2000,
  title = {Basic Logic: Reflection, Symmetry, Visibility},
  volume = {65},
  issn = {0022-4812, 1943-5886},
  shorttitle = {Basic Logic},
  abstract = {We introduce a sequent calculus B for a new logic, named basic logic. The aim of basic logic is to find a structure in the space of logics. Classical, intuitionistic. quantum and non-modal linear logics, are all obtained as extensions in a uniform way and in a single framework. We isolate three properties, which characterize B positively: reflection, symmetry and visibility.A logical constant obeys to the principle of reflection if it is characterized semantically by an equation binding it with a metalinguistic link between assertions, and if its syntactic inference rules are obtained by solving that equation. All connectives of basic logic satisfy reflection.To the control of weakening and contraction of linear logic, basic logic adds a strict control of contexts, by requiring that all active formulae in all rules are isolated, that is visible. From visibility, cut-elimination follows. The full, geometric symmetry of basic logic induces known symmetries of its extensions, and adds a symmetry among them, producing the structure of a cube.},
  language = {en},
  number = {3},
  journal = {The Journal of Symbolic Logic},
  doi = {10.2307/2586685},
  author = {Sambin, Giovanni and Battilotti, Giulia and Faggian, Claudia},
  month = sep,
  year = {2000},
  pages = {979-1013},
  file = {/home/zach/Documents/bib/Sambin et al - 2000 - Basic logic - reflection, symmetry, visibility2.pdf}
}

@article{Schroeder-Heister2013,
  series = {Articles in Honor of {{Giovanni Sambin}}'s 60th Birthday},
  title = {Definitional Reflection and Basic Logic},
  volume = {164},
  issn = {0168-0072},
  abstract = {In their Basic Logic, Sambin, Battilotti and Faggian give a foundation of logical inference rules by reference to certain reflection principles. We investigate the relationship between these principles and the principle of Definitional Reflection proposed by Halln{\"a}s and Schroeder-Heister.},
  number = {4},
  journal = {Annals of Pure and Applied Logic},
  doi = {10.1016/j.apal.2012.10.010},
  author = {{Schroeder-Heister}, Peter},
  month = apr,
  year = {2013},
  keywords = {Proof-theoretic semantics,Sequent calculus,Basic logic,Definitional reflection},
  pages = {491-501},
  file = {/home/zach/Documents/bib/Schroeder-Heister - 2013 - Definitional reflection and basic logic.pdf}
}

@book{ShoesmithSmiley1978,
  address = {{Cambridge}},
  title = {Multiple-Conclusion Logic},
  publisher = {{Cambridge University Press}},
  author = {Shoesmith, David J. and Smiley, Timothy J.},
  year = {1978},
  file = {/home/zach/Documents/bib/Shoesmith and Smiley - 1978 - Multiple-conclusion logic.pdf}
}

@inproceedings{GeuversHurkens2017,
  address = {{Berlin}},
  series = {Lecture {{Notes}} in {{Computer Science}}},
  title = {Deriving Natural Deduction Rules from Truth Tables},
  volume = {10119},
  isbn = {978-3-662-54069-5},
  abstract = {We develop a general method for deriving natural deduction rules from the truth table for a connective. The method applies to both constructive and classical logic. This implies we can derive ``constructively valid'' rules for any classical connective. We show this constructive validity by giving a general Kripke semantics, that is shown to be sound and complete for the constructive rules. For the well-known connectives ({$\vee\vee\backslash$}vee , {$\wedge\wedge\backslash$}wedge , \textrightarrow\textrightarrow\textbackslash{}rightarrow , {$\lnot\lnot\backslash$}lnot ) the constructive rules we derive are equivalent to the natural deduction rules we know from Gentzen and Prawitz. However, they have a different shape, because we want all our rules to have a standard ``format'', to make it easier to define the notions of cut and to study proof reductions. In style they are close to the ``general elimination rules'' studied by Von Plato [13] and others. The rules also shed some new light on the classical connectives: e.g. the classical rules we derive for \textrightarrow\textrightarrow\textbackslash{}rightarrow allow to prove Peirce's law. Our method also allows to derive rules for connectives that are usually not treated in natural deduction textbooks, like the ``if-then-else'', whose truth table is clear but whose constructive deduction rules are not. We prove that ''if-then-else'', in combination with {$\perp\perp\backslash$}bot and {$\top\top\backslash$}top , is functionally complete (all other constructive connectives can be defined from it). We define the notion of cut, generally for any constructive connective and we describe the process of ``cut-elimination''.},
  language = {en},
  booktitle = {Logic and {{Its Applications}}},
  publisher = {{Springer}},
  doi = {10.1007/978-3-662-54069-5_10},
  author = {Geuvers, Herman and Hurkens, Tonny},
  editor = {Ghosh, Sujata and Prasad, Sanjiva},
  year = {2017},
  pages = {123-138},
  file = {/home/zach/Documents/bib/Geuvers and Hurkens - 2017 - Deriving Natural Deduction Rules from Truth Tables.pdf}
}

@article{Zach2017a,
  title = {General Natural Deduction Rules and General Lambda Calculi},
  volume = {23},
  copyright = {All rights reserved},
  number = {3},
  journal = {Bulletin of Symbolic Logic},
  author = {Zach, Richard},
  year = {2017},
  keywords = {abstract},
  pages = {371}
}

@article{BaazLeitsch2000,
  title = {Cut-Elimination and Redundancy-Elimination by Resolution},
  volume = {29},
  issn = {0747-7171},
  abstract = {A new cut-elimination method for Gentzen's LK is defined. First cut-elimination is generalized to the problem of redundancy-elimination. Then the elimination of redundancy in LK-proofs is performed by a resolution method in the following way. A set of clauses C is assigned to an LK-proof {$\psi$} and it is shown that C is always unsatisfiable. A resolution refutation of C then serves as a skeleton of an LK-proof {$\psi{'}$} with atomic cuts;{$\psi{'}$} can be constructed from the resolution proof and {$\psi$} by a projection method. In the final step the atomic cuts are eliminated and a cut-free proof is obtained. The complexity of the method is analyzed and it is shown that a non-elementary speed-up over Gentzen's method can be achieved. Finally an application to automated deduction is presented: it is demonstrated how informal proofs (containing pseudo-cuts) can be transformed into formal ones by the method of redundancy-elimination; moreover, the method can even be used to transform incorrect proofs into correct ones.},
  language = {en},
  number = {2},
  journal = {Journal of Symbolic Computation},
  doi = {10.1006/jsco.1999.0359},
  author = {Baaz, Matthias and Leitsch, Alexander},
  year = {2000},
  pages = {149-176},
  file = {/home/zach/Documents/bib/Baaz and Leitsch - 2000 - Cut-elimination and Redundancy-elimination by Resolution.pdf}
}
\end{document}